\newtheorem{defi}{Definition}[section]
\newtheorem{lem}[defi]{Lemma}
\newtheorem{theo}[defi]{Theorem}
\newtheorem{pro}[defi]{Proposition}
\newtheorem{rem}[defi]{Remark}
\DeclareMathOperator{\N}{\mathbb{N}}
\DeclareMathOperator{\R}{\mathbb{R}}
\DeclareMathOperator{\C}{\mathbb{C}}
\DeclareMathOperator{\T}{\mathbb{T}}
\DeclareMathOperator{\D}{\mathbb{D}}
\DeclareMathOperator{\Z}{\mathbb{Z}}
\title[]{Steady solutions for the Schr\"odinger map equation}
\author[C. Garc\'ia]{Claudia Garc\'ia}
\address{ Departamento de Matem\'aticas, Universidad Aut\'onoma de Madrid, Ciudad Universitaria de Cantoblanco, 28049, Madrid, Spain \& Research Unit ``Modeling Nature'' (MNat), Universidad de Granada, 18071 Granada, Spain}
\email{ claudia.garcial@uam.es}
\author{Luis Vega}
\address{Departamento de Matem\'aticas, Universidad del Pais Vasco, Apdo 644, 48080, Bilbao, Spain \& BCAM, Mazarredo 14, 48009, Bilbao, Spain}
\email{lvega@bcamath.org}
\thanks{ C.G. has been supported by the ERC-StG-852741 (CAPA), the MINECO--Feder (Spain) research grant number RTI2018--098850--B--I00, the Junta de Andaluc\'ia (Spain) Project
FQM 954, the Severo Ochoa Programme for Centres of Excellence in R\&D(CEX2019-000904-S) and by the PID2021-124195NB-C32, the  ERC Advanced Grant 834728 and by the CAM under the multiannual Agreement with UAM in the line for the Excellent of the University Research Staff in the context of the V PRICIT,   and  L. V. has been supported by MINECO grant PGC2018-094522-B-I00 (Spain) and IT1247-19 (Gobierno Vasco). This work was developed during the semester program {\it Hamiltonian Methods in Dispersive and Wave Evolution Equations} at ICERM.}
\begin{document}

\date{\today}

\begin{abstract}
In this paper we use bifurcation methods to construct a new family of solutions of the binormal flow, also known as the vortex filament equation, which do not change their form. Our examples are complementary to those obtained by S. Kida in 1981, and therefore they are also related, thanks to the so-called Hasimoto transformation, to travelling wave solutions of the 1d cubic non-linear Schrödinger equation.
\end{abstract}

\maketitle

\tableofcontents

\section{Introduction}

We are interested in space periodic solutions of the vortex filament equation
\begin{equation}\label{intro-equation-1}
\mathbf{X}_t=\mathbf{X}_s\wedge_+ \mathbf{X}_{ss},
\end{equation}
where $\wedge_+$ is the usual cross-product, and $s$ is the arclength. This equation was proposed by Da Rios in \cite{DaR} as a simplified model that describes the evolution of vortex filaments, see also \cite{ArHa} and \cite{Be}. Some simple examples of solutions are the circle and the straight filament which are easily obtained because the curvature is constant and have zero torsion. Looking for traveling solutions for the tangent vector $\bf{T}=\bf{X}_s$ one  obtains helices which are characterised for having constant curvature and constant torsion. Moreover,  Kida \cite{Kida81, Kida82} found a rich family of steady solutions that can be described in terms of elliptic integrals of the first, second, and third kind. Indeed, the evolution of this family consists in a rotation with constant angular velocity, a translation with constant speed, and a slipping motion.

The aim of this paper is to study space periodic solutions to \eqref{equation-1} close to the circle or the helix of any radius and any pitch, that are different from the Kida's solutions. 
Motivated by the amount of work done in the last decades about periodic vortex patches, see \cite{HM, HMV, Hmidi-Renault} and references therein, we will implement the bifurcation theory in terms of the well-known Crandall-Rabinowitz theorem (see Section \ref{sec-CR}) and find steady solutions which are perturbations of the helices. It is relevant to observe that helices are included in Kida's family and they are studied in Section 4.2 of \cite{Kida81}.  In this paper we propose an alternative procedure to look for perturbations of helices to the one proposed by Kida in Section 4.3 of the same article. Our construction will also give for general helices the instability result obtained by Jerrard and Smets in \cite{Jerrard-Smets} for circles and straight lines.

For doing this we will not directly work with \eqref{intro-equation-1} but with the tangent vector $\bf{T}=\bf{X}_s$. It is immediate to obtain from \eqref{intro-equation-1} that the equation that $\bf{T}$ has to satisfy is
\begin{equation}\label{intro-equation-2}
\mathbf{T}_t=\mathbf{T}\wedge_+ \mathbf{T}_{ss}.
\end{equation}
Here, both the circle and the helix can be seen as circles in the sphere. We will project these solutions to the plane with the help of the stereographic projection,
\begin{equation}\label{intro-z-equation}
z=x+iy\equiv (x,y)\equiv\left(\frac{T_1}{1+T_3},\frac{T_2}{1+T_3}\right),
\end{equation}
which satisfies 
\begin{equation}\label{intro-z-equation-2}
z_t=i z_{ss}- \frac{2i\overline{z}}{1+|z|^2}z_s^2,
\end{equation}
see for example \cite{Hoz-GarciaCervera-Vega, Vega2015} for more details about the deduction of this equation. Here, we will be interested in  solutions of the form
\begin{equation}\label{intro-evolution-trivial}
z(t,s)=e^{i\Omega t}z_0(s-at),
\end{equation}
that is, we assume some rotating and slipping motion for $z$. Then, \eqref{intro-z-equation-2} agrees with
\begin{equation}\label{intro-z-equation-3}
\Omega z_0(\xi)+aiz_0'(\xi)-z_0''(\xi)+ \frac{2\overline{z_0(\xi)}z_0'(\xi)^2}{1+ |z_0(\xi)|^2}=0,
\end{equation}
where $\xi=s-at$. We define $z_0(\xi)=g(e^{i\xi})=g(w)$, $R\geq 0$, and
$$
g(w)=w(R+f(w)), \quad \Omega=\Omega_R+\lambda, \quad \Omega_R=a+\frac{-1+R^2}{1+R^2},
$$
where $f$ is a perturbation of the helix. Then, we have the equivalent equation
\begin{equation}\label{equation-intro}
G_a(R,\lambda, f)(w)=0, \quad \forall w\in\T,
\end{equation}
with
\begin{align}\label{intro-G-eq}
G_a(R,\lambda,f)(w):=&\Omega R+\Omega f(w)+(1-a)R+(1-a)f(w)+(3-a)wf'(w)+w^2f''(w)\\
&- 2\frac{(R+\overline{f(w)})(R+f(w)+wf'(w))^2}{1+ (R^2+Rf(w)+R\overline{f(w)}+f(w)\overline{f(w)})}.\nonumber
\end{align}
Hence, finding solutions to \eqref{intro-equation-1} with the ansatz \eqref{intro-evolution-trivial} agrees with looking for the roots of $G_a$, where a trivial curve of solutions is known:
\begin{equation}\label{trivial-sol}
G_a(R,0,0)=0, \quad \forall R\geq 0,
\end{equation}
which correspond to the steady helices. The circle refers here to $a=0$ and $R=1$ implying that $\Omega_1=0$.

The resolution of a nonlinear equation of the type \eqref{equation-intro} together with the existence of a trivial line of solutions \eqref{trivial-sol} can be studied through bifurcation theory. In particular, here we will apply the well-known Crandall-Rabinowitz theorem depending on parameters (in this case: $a$). Hence, the existence of nontrivial roots of \eqref{equation-intro} reduces to the spectral study of the linearized operator at the trivial solution: $\partial_f G_a(R,0,0)$. In particular, one needs to show that
\begin{equation}\label{intro-kernel}
\dim \textnormal{Ker} \partial_f G_a(R^\star,0,0)\neq 0,
\end{equation}
for some $R^\star$, which is  often called as an {\it eigenvalue}. After adding the extra parameter $m$ defining the $m$-fold symmetry of the solution, we find the following eigenvalues. For $a=0$, hence only $R^\star=1$ is found for $m=1$. In such case, eigenvalues do not exist for other symmetries $m>1$. Letting the parameter $a$ to be free, we find two family of eigenvalues:
$$
R^\star=R_m^{e,-}:=\sqrt{\frac{- \left(m^2+2-a^2\right)- 2\sqrt{3m^2-3+a^2}}{m^2-4-4a-a^2}},
$$
for $\sqrt{a^2-2}<m<a+2$ if $a>\sqrt{2}$, and $m<a+2$ for $a\leq \sqrt{2}$.  Moreover, in the case $a<1$, the symmetry $m=1$ is also allowed associated to the eigenvalue:
$$
R^\star=R_1^{e,+}:=\sqrt{\frac{-3+a^2+ 2a}{-3-4a-a^2}}.
$$
Hence, fixing $a$, there exists a finite number of eigenvalues parametrized by the symmetry $m$.

After finding such eigenvalues, the transversal condition of Crandall-Rabinowitz theorem has to be satisfied, and this reads as
\begin{align}\label{transversal}
\left(\frac{(m+a)(1+ (R^\star)^2)-2(1- (R^\star)^2)}{(m-a)(1+ (R^\star)^2)+2(1- (R^\star)^2)}\right)^2
&\neq \frac{ 1- (R^\star)^3-2m(1+ (R^\star)^2) }{ 1- (R^\star)^3+2m(1+ (R^\star)^2)   }.
\end{align}

From such eigenvalues satisfying \eqref{transversal}, we can apply Crandall-Rabinowitz theorem to find a local curve of solutions which is transversal to the trivial one. Our main result, in a simplified version, is as follows.

\begin{theo}\label{intro-theorem}
\begin{itemize}
\item For $a<1$, there exists a nontrivial curve of solutions $(R(\eta), \lambda(\eta), f(\eta))$, with $\eta\in I$, to \eqref{equation-intro} where $f$ is $1$-fold symmetric, which bifurcates from
$$
R_1^{e,+}=\sqrt{\frac{-3+a^2+ 2a}{-3-4a-a^2}},
$$
only if \eqref{transversal} is satisfied.
\item For  $a$ fiixed, there exists a nontrivial curve of solutions $(R(\eta), \lambda(\eta), f(\eta))$, with $\eta\in I$, to \eqref{equation-intro} which bifurcates from
$$
R_m^{e,-}=\sqrt{\frac{ \left(m^2+2-a^2\right)+2\sqrt{3m^2-3+a^2}}{4+4a+a^2-m^2}},
$$
with $m$-fold symmetry, where $m$ verifies: $\sqrt{a^2-2}<m<a+2$ if $a>\sqrt{2}$, and $m<a+2$ for $a\leq \sqrt{2}$; only if \eqref{transversal} is satisfied.
\end{itemize}
\end{theo}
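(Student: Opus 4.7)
My plan is to verify the hypotheses of the Crandall--Rabinowitz theorem for the map $G_a$ with $R$ playing the role of bifurcation parameter and $(\lambda,f)$ as unknown, along the trivial curve \eqref{trivial-sol}. The natural functional setting is a Hölder--type (or Wiener--algebra) space of $m$-fold symmetric functions on $\T$ of the form $f(w)=\sum_{k\geq 1} c_k w^{km}$, so that the rotational symmetry is encoded in the space itself and the scalar $\lambda$ takes care of the zero Fourier mode that the nonlinear term of \eqref{intro-G-eq} inevitably produces.

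The core computation is the linearisation $\mathcal{L}_R:=\partial_f G_a(R,0,0)$. The algebra at $f=0$ is considerably simplified by the fact that $\Omega_R$ has been tuned precisely so that $G_a(R,0,0)=0$; expanding $G_a$ to first order in $f$ after clearing the denominator $1+R^2$ should produce an operator of the form
\[
\mathcal{L}_R h \;=\; A(R,a)\,h \;+\; B(R,a)\,w h'(w) \;+\; w^2 h''(w) \;-\; C(R,a)\,\overline{h(w)},
\]
with coefficients $A,B,C$ explicit rational functions of $R$ and $a$. On a Fourier monomial $h=w^{km}$, and using $\overline{h}=w^{-km}$ on $\T$, this operator preserves the two-dimensional subspace $\mathrm{span}\{w^{km},w^{-km}\}$ and acts on it as an explicit $2\times 2$ matrix. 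The vanishing of its determinant yields, after clearing denominators, a polynomial $P_{km}(R,a)=0$ that is quadratic in $R^2$; solving for $R^2$ produces exactly the two families $R_m^{e,\pm}$ stated in the theorem, and the admissibility constraints $\sqrt{a^2-2}<m<a+2$ and the analogous condition for $R_1^{e,+}$ come directly from positivity of the discriminant and the radicand.

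To invoke Crandall--Rabinowitz I then have to confirm simplicity of the kernel and the Fredholm property. Fredholmness of index zero follows because $\mathcal{L}_R$ is the elliptic operator $w^2\partial_w^2$, an isomorphism between the chosen spaces, plus a compact lower-order perturbation. Simplicity of the kernel reduces to the statement that at the designated $(R^\star,a)$ the polynomial $P_{km}(R^\star,a)$ vanishes for $k=1$ but not for $k\geq 2$; this is a concrete monotonicity in $k$ of the explicit coefficients and can be verified by inspection once $R^\star$ is substituted in.

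The most delicate step, and the reason for the ``only if'' in the statement, is the transversality condition. After computing $\partial_R \mathcal{L}_{R^\star}$ applied to the generator $h_0=w^m$ of the kernel, I must detect when the result fails to lie in the range of $\mathcal{L}_{R^\star}$. Since $\mathcal{L}_{R^\star}$ acts as the explicit $2\times 2$ block described above on the Fourier pair $(w^m,w^{-m})$, non-membership in the range reduces to a single scalar non-vanishing condition, which, after using the dispersion relation $P_m(R^\star,a)=0$ to eliminate redundant terms, must simplify to \eqref{transversal}. I expect this algebraic reduction to be the main obstacle: one has to carry the exact form of $A,B,C$ and their $R$-derivatives through a moderately long manipulation while avoiding divisions by quantities that themselves vanish on sub-loci of the $(a,m,R)$-parameter space, in particular taking care at $R=1$, $m=a\pm 2$, and in the regime $a\to\sqrt{2}$. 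Once \eqref{transversal} is verified, the Crandall--Rabinowitz theorem produces a local $C^1$ curve $(R(\eta),\lambda(\eta),f(\eta))$ bifurcating from the helix and tangent to $(R^\star,0,h_0)$, which is precisely the conclusion of Theorem \ref{intro-theorem}.
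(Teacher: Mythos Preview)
Your overall strategy matches the paper's: linearise $G_a$ at the trivial helix, reduce to a $2\times 2$ block on each Fourier pair, solve the dispersion relation for $R^\star$, and verify the Crandall--Rabinowitz hypotheses. However, there is a genuine gap in the functional setup and in the identification of the kernel.

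You take the domain to be $\{f(w)=\sum_{k\geq 1} c_k w^{km}\}$, i.e.\ only nonnegative frequencies. This cannot work: the nonlinear operator \eqref{intro-G-eq} contains $\overline{f(w)}$, which on $\T$ sends $w^{km}$ to $w^{-km}$, so $G_a$ does not map your space to itself. The paper instead works in $X_m=\{f\in C^2(\T):\ f(w)=\sum_{n\in\Z\setminus\{0\}} f_n\,w^{nm},\ f_n\in\R\}$, with the reality condition $f_n\in\R$ encoding the symmetry $f(\overline{w})=\overline{f(w)}$; this is what makes $G_a:X_m\to Y_m$ well-defined. You implicitly acknowledge this when you say the linearisation preserves $\mathrm{span}\{w^{km},w^{-km}\}$, but then the ambient space must contain both.

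Consequently the kernel generator is \emph{not} $h_0=w^m$. When the determinant of the $2\times 2$ block vanishes at $R=R^\star$, the null vector is a specific $(a_m,a_{-m})$, and the kernel is spanned by $h_0=\beta(m,R^\star)\,w^m+\overline{w}^m$ with
\[
\beta(m,R^\star)=\frac{(m+a)(1+(R^\star)^2)-2(1-(R^\star)^2)}{(m-a)(1+(R^\star)^2)+2(1-(R^\star)^2)}.
\]
This matters twice: first for the transversality check, since $\partial_R\mathcal{L}_{R^\star}h_0$ must be computed on the correct $h_0$ and tested against the correct range condition (the range is characterised by a linear relation between the $w^m$ and $w^{-m}$ coefficients, not by vanishing of a single coefficient); and second because $\beta(m,R^\star)$ is exactly the quantity appearing squared on the left of \eqref{transversal}. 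With the wrong $h_0$ you will not arrive at \eqref{transversal}. A minor further point: the principal part you isolate for Fredholmness, $w^2\partial_w^2$, is not an isomorphism on its own (it kills $w$); the paper uses $R\mu+w^2h''+3wh'$ together with the scalar $\mu$ to handle the zero mode.
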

Let us mention that \eqref{transversal} is checked for the case $a=0$ in Proposition \ref{prop-transversal}, and can be also checked for other $a\neq 0$. Moreover, let us recall that we are finding solutions $f$, but we can come back to the original variable $z$:
\begin{equation}\label{intro-nontrivial-z}
z_0(s)=g(e^{is})=g(w)=w(R+f(w)), \quad z(t,s)= e^{i\Omega t}z_0(s-at).
\end{equation}

An important subclass of the family constructed in Theorem 1.1 is obtained taking $m=a+\theta$ with $0\leq\theta\leq 2$. For that choice we have that if
$$\lim_{m\to \infty} R_m^{e,-}:= R_\theta=\sqrt{1+\frac{2\theta}{2-\theta}},
$$
then $1\leq R_\theta\leq\infty$. As a consequence, we can approach any given helix by a sequence of solutions with analogous properties than those used in \cite[Theorem 2]{Jerrard-Smets}  for the circle.

Once we have obtained non trivial solutions \eqref{intro-nontrivial-z} we can come back to ${\bf T^0}$:
$$
\mathbf{T}^0=\frac{1}{1\pm |z_0(s)|^2}\left(2z_0(s),1\mp |z_0(s)|^2\right),\quad \mathbf{T}(t,s)=\mathscr{R}^{\Omega t}\mathbf{T}^0(s-at),
$$
where $\mathscr{R}^{\Omega t}{\bf x}=(e^{i\Omega t}(x_1,x_2),x_3).$ Hence, we find solutions for the Schr\"odinger map equation which are rotations of the initial one. Moreover, the initial tangent vector is an appropriate perturbation of the helix one.

As for ${\bf X}$ we get
\begin{align*}
\mathbf{X}=&\mathscr{R}^{\Omega t}\int_{0}^s \mathbf{T}^0(\sigma-at)d\sigma+\int_0^t \mathscr{R}^{\Omega\tau}\left(\mathbf{T^0}(-a\tau)\wedge_\pm \mathbf{T}_s^0(-a\tau)\right))d\tau.
\end{align*}

\begin{figure}[h]
\minipage{0.5\textwidth}
\begin{center}
\includegraphics[width=0.7\textwidth]{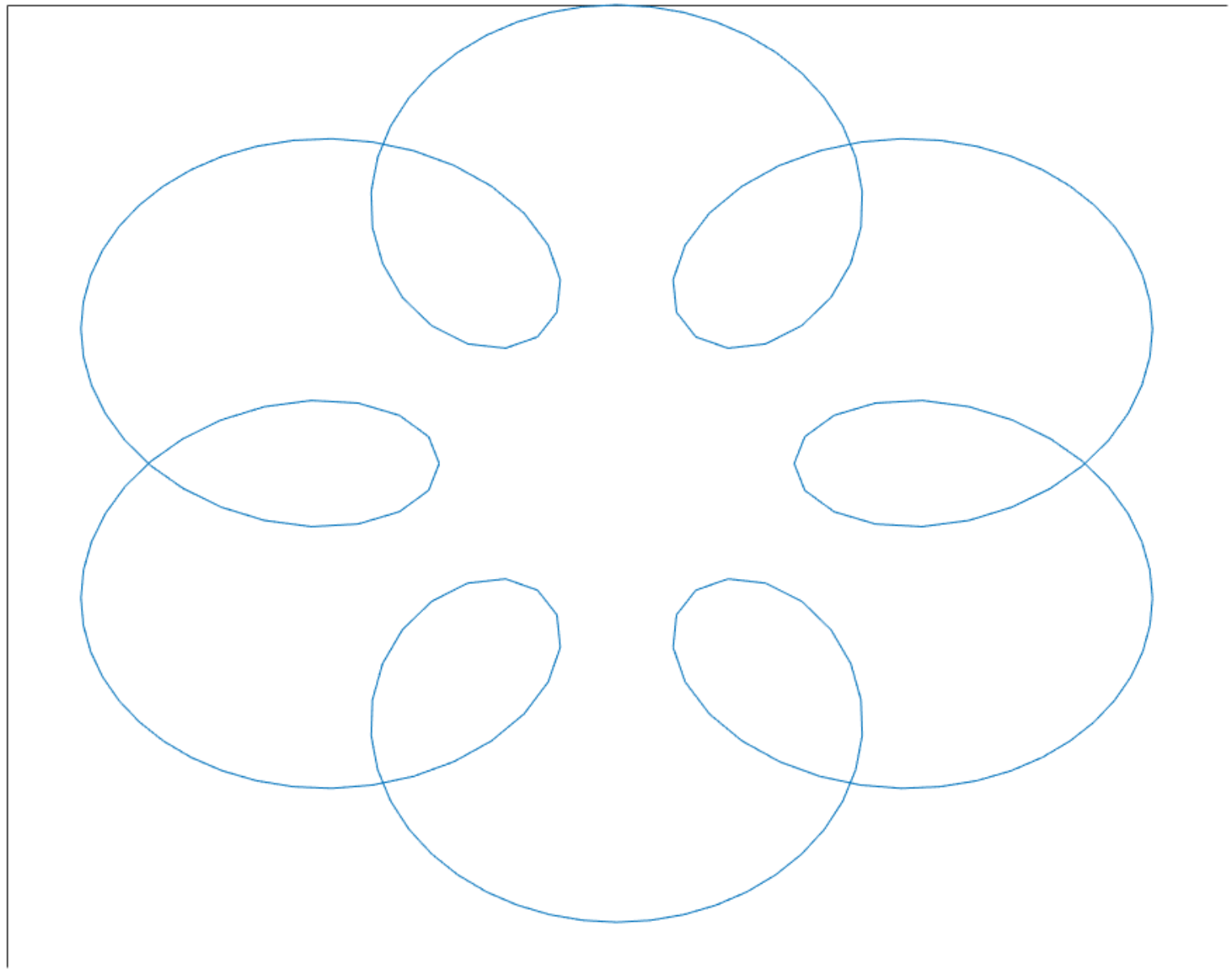}
\end{center}
\endminipage\hfill
\minipage{0.5\textwidth}
\begin{center}
\includegraphics[width=0.8\textwidth]{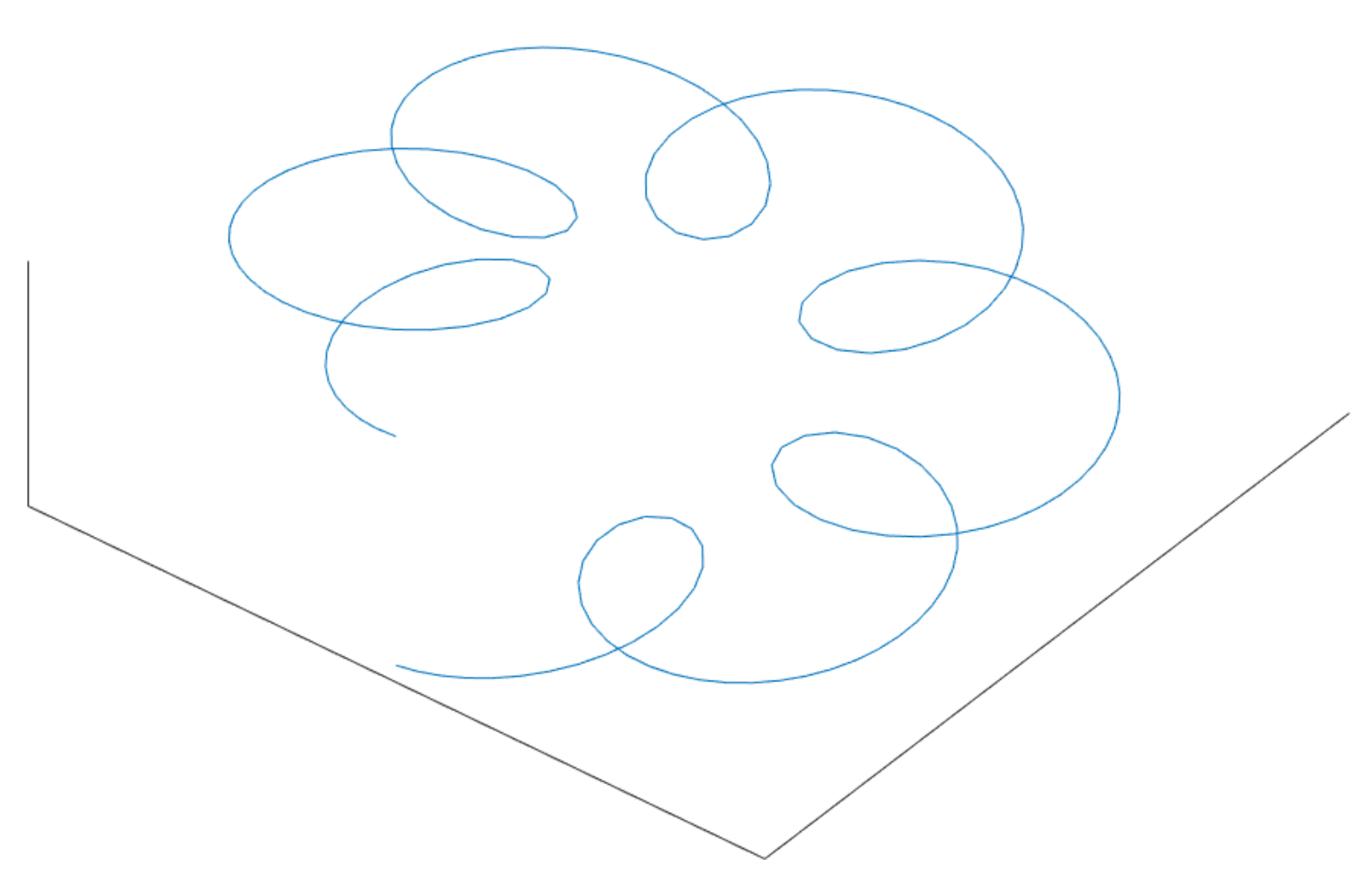}
\end{center}
\endminipage\hfill
%
\caption{$m=6$, $a=0$, initial time}
\end{figure}

Let us remark that we will also cover the case with a hyperbolic underlying  geometry. It is  given by the equation
\begin{equation}\label{intro-equation-1-hyp}
\mathbf{X}_t=\mathbf{X}_s\wedge_- \mathbf{X}_{ss},
\end{equation}
with $\wedge_-$ defined as
$$
\mathbf{a}\wedge_-\mathbf{b}=(a_2b_3-a_3b_2,a_3b_1-a_1b_3,-(a_1b_2-a_2b_1).
$$
Here, we can also use the stereographic projection to find the equation of $z$. After decomposing $z$ in terms of $f$, we can relate the existence of nontrivial rotating and slipping solutions to \eqref{intro-equation-1-hyp} with the roots of
$$
G_a^h(R,\lambda,0)=0,
$$
where
\begin{align}\label{intro-G-eq-2}
G_a^h(R,\lambda,f)(w):=&\Omega R+\Omega f(w)+(1-a)R+(1-a)f(w)+(3-a)wf'(w)+w^2f''(w)\\
&+ 2\frac{(R+\overline{f(w)})(R+f(w)+wf'(w))^2}{1- (R^2+Rf(w)+R\overline{f(w)}+f(w)\overline{f(w)})},
\end{align}
with $\Omega=\Omega_R+\lambda$ and 
$$
\Omega_R=a+\frac{-1-R^2}{1-R^2}.
$$
In the hyperbolic case, the family of eigenvalues, i.e., the values $R$ such that the kernel of the linear operator is not trivial, is richer. Indeed, we can find an infinite number of curves parametrized by $m>m_0$, for some $m_0$, where $m$ refers to the $m$-fold symmetry. The statement of the main result in the hyperbolic case is postpone to Section \ref{sec-main-result}.

The paper is organized as follows. In Section \ref{sec-equations}, we will review the equivalent equations to \eqref{intro-equation-1} in terms of the Schrodinger map. Later, we will find the equation for the rotating and slipping solutions and how we perturb the trivial ones (the circle and the helix). Later, we will introduce the function spaces that will be used in order to implement the Crandall-Rabinowitz theorem, which will be presented in Section \ref{sec-CR}. Then, Sections \ref{sec-spectral-0} and \ref{sec-spectral-a} aims to provide the spectral study for the cases $a=0$ (without slipping motion) and $a\neq 0$. Although the first case is covered also in Section \ref{sec-spectral-a}, we give all the details for the sake of clarity.  Section \ref{sec-main-result} gives the main result of this paper together with the proof. Finally, in Section \ref{sec-kida}, we compare our solutions with those ones obtained by Kida in \cite{Kida81}.

\section{Formulation}\label{sec-equations}

\subsection{Schrodinger map equation}

Given a curve $\textbf{X}_0:\R\rightarrow \R^3$, consider the geometric flow
$$
\mathbf{X}_t=c\mathbf{b},
$$
where $c$ is the curvature and ${\bf b}$ is the binormal component of the Frenet-Serret formulae
\begin{equation}\label{Frenet}
\left(\begin{array}{l}
\textbf{T}\\ \textbf{n} \\ \textbf{b}
\end{array}\right)_s=
\left(
\begin{array}{lll}
0&c&0\\
-c&0&\tau\\
0&-\tau&0
\end{array}
\right)
\cdot
\left(\begin{array}{l}
\textbf{T}\\ \textbf{n} \\ \textbf{b}
\end{array}\right).
\end{equation}
Then, the flow can be expressed as 
\begin{equation}\label{equation-1}
\mathbf{X}_t=\mathbf{X}_s\wedge_+ \mathbf{X}_{ss},
\end{equation}
where $\wedge_+$ is the usual cross-product, and $s$ is the arclength.


We can also derive an equation for the tangent vector $\mathbf{T}=\mathbf{X}_s$. First, note that it remains with constant length, and then we can assume that it takes values on the unit sphere. Differentiating \eqref{equation-1} we arrive at
\begin{equation}\label{equation-2}
\mathbf{T}_t=\mathbf{T}\wedge_+ \mathbf{T}_{ss}.
\end{equation}
This equation is known as the Schr\"odinger map equation on the sphere.

We can generalize \eqref{equation-2} by considering more complex varieties as its image. Indeed, we can choose also the hyperbolic plane $\mathbb{H}^2$ as the target space. In such a case, the equation for $\mathbf{T}$ reads as 
\begin{equation}\label{equation-2-hyp}
\mathbf{T}_t=\mathbf{T}\wedge_- \mathbf{T}_{ss},
\end{equation}
or equivalent for $\mathbf{X}$:
\begin{equation}\label{equation-1-hyp}
\mathbf{X}_t=\mathbf{X}_s\wedge_- \mathbf{X}_{ss},
\end{equation}
with $\wedge_-$ defined as
$$
\mathbf{a}\wedge_-\mathbf{b}=(a_2b_3-a_3b_2,a_3b_1-a_1b_3,-(a_1b_2-a_2b_1).
$$
In the same way, the equation for the curve can be generalized to
\begin{equation}\label{equation-1-pm}
\mathbf{X}_t=\mathbf{X}_s\wedge_\pm \mathbf{X}_{ss},
\end{equation}
with
$$
\mathbf{a}\wedge_\pm\mathbf{b}=(a_2b_3-a_3b_2,a_3b_1-a_1b_3,\pm(a_1b_2-a_2b_1).
$$
Equivalently, we define the modified the scalar product as
$$
\mathbf{a}\circ_\pm\mathbf{b}=a_1b_1+a_2b_2\pm a_3b_3.
$$
By using $\pm$ we can consider simultaneously the Euclidean case (corresponding to $+$) and the Hyperbolic case (corresponding to $-$). In what follows, when we use $\mp$ we will refer $-$ sign for the Euclidean case, and the $+$ sign for the Hyperbolic case. Then, the equation for the tangent vector agrees with 
\begin{equation}\label{equation-2-pm}
\mathbf{T}_t=\mathbf{T}\wedge_\pm \mathbf{T}_{ss}.
\end{equation}
If $\mathbf{T}\in \mathbb{H}^2$, we can give a generalized version of the Frenet-Serret trihedron \eqref{Frenet}, for each point of the curve $\mathbf{X}$, formed by $\mathbf{T}$ and two other vectors $\mathbf{e_1}$ and $\mathbf{e_2}$, see \cite{Hoz-GarciaCervera-Vega}.

In order to work better with $\mathbf{T}$, we will consider its stereographic projection over $\C$, see \cite{Hoz-GarciaCervera-Vega}. Define
\begin{equation}\label{z-equation}
z=x+iy\equiv (x,y)\equiv\left(\frac{T_1}{1+T_3},\frac{T_2}{1+T_3}\right).
\end{equation}
We are projecting $\mathbf{T}$ from $\mathbb{S}^2-(0,0,-1)$ into $\R^2$, identifying $\R^2$ with $\C$. In the Euclidean case, where $\mathbf{T}\in\mathbb{S}^2$, there is a point on the sphere, $(0,0,-1)$, to which no point in $\C$ corresponds, because the sphere is compact. Thus, we have a bijection between $\mathbb{S}^2-\{(0,0,-1)\}$ and $\R^2$. In the hyperbolic case, when $\mathbf{T}\in\mathbb{H}^2$, since $T_3>0$, we have a bijection between $\D$ and $\mathbb{H}^2$, where
$$
\D=\left\{(x,y)\in\R^2, \quad x^2+y^2<1\right\}.
$$
The tangent vector $\mathbf{T}$ can be recovered as
\begin{equation}\label{equation-T-z}
\mathbf{T}=(T_1,T_2,T_3)\equiv\left(\frac{2x}{1\pm x^2\pm y^2},\frac{2y}{1\pm x^2\pm y^2}, \frac{1\mp x^2\mp y^2}{1\pm x^2\pm y^2}\right).
\end{equation}
Differentiating in \eqref{z-equation} we arrive at the following nonlinear Schr\"odinger equation for $z$:
\begin{equation}\label{z-equation-2}
z_t=i z_{ss}\mp \frac{2i\overline{z}}{1\pm|z|^2}z_s^2.
\end{equation}

Explicit steady solutions for the equation of ${\bf X}$ are the circle and helix filaments. Those can be translated to ${\bf T}$ and are identified there as circles. We can go further and identify them in terms of $z$ as $z_0(s)=Re^{is}$. Such solutions can be observed to be rotating solutions of \eqref{z-equation-2}.

\subsection{Rotating and slipping solutions}
We will be interested in rotating and slipping solutions of \eqref{z-equation-2}, that is,
$$
z(t,s)=e^{i\Omega t}z_0(s-at),
$$
for some constant angular velocity $\Omega\in\R$. Assuming such ansatz, \eqref{z-equation-2} agrees with
\begin{equation}\label{z-equation-3}
\Omega z_0(\xi)+aiz_0'(\xi)-z_0''(\xi)\pm \frac{2\overline{z_0(\xi)}z_0'(\xi)^2}{1\pm |z_0(\xi)|^2}=0,
\end{equation}
where $\xi=s-at$. We would like to work with the complex notation $w=e^{i\xi}$, instead of the variable $\xi$. Then, denote $g$ as
$$
z_0(\xi)=g(e^{i\xi})=g(w).
$$
In that way, \eqref{z-equation-3} amounts to
\begin{equation}\label{z-equation-4}
\Omega g(w)+(1-a)wg'(w)+w^2g''(w)\mp \frac{2\overline{g(w)}w^2g'(w)^2}{1\pm |g(w)|^2}=0,
\end{equation}
for any $w\in\T$.

If 
\begin{equation}\label{trivial}
\Omega_R=a+\frac{-1\pm R^2}{1\pm R^2}, \quad g(w)=Rw,
\end{equation}
we have that \eqref{z-equation-4} is trivially satisfied. This family of steady solutions correspond to the circle ($R=1$ and $a=0$) and the helix solutions.

In this work, we would like to find nontrivial solutions to \eqref{z-equation-4} that are perturbations of the trivial solutions \eqref{trivial}. For that reason, we will better work with $f$ instead of $g$ defined as 
\begin{equation}\label{f}
g(w)=w(R+f(w)).
\end{equation}
Moreover, we will work with $\lambda$ instead of $\Omega$ defined as
\begin{equation}\label{lambda}
\Omega=\Omega_R+\lambda.
\end{equation}
Hence, \eqref{z-equation-4} agrees with 
$$
G_a(R,\lambda, f)(w)=0, \quad \forall w\in\T,
$$
with
\begin{align}\label{G-eq}
G_a(R,\lambda,f)(w):=&\Omega R+\Omega f(w)+(1-a)R+(1-a)f(w)+(3-a)wf'(w)+w^2f''(w)\\
&\mp 2\frac{(R+\overline{f(w)})(R+f(w)+wf'(w))^2}{1\pm (R^2+Rf(w)+R\overline{f(w)}+f(w)\overline{f(w)})}.\nonumber
\end{align}

In the following proposition, we give the trivial solutions of $G_a$, which corresponds to the helix or the circle.

\begin{pro}\label{prop-trivial}
For any $a\in\R$, we have that $G_a(R,0,0)\equiv 0$, for any $R>0$, with $R\leq 1$ for the Hyperbolic case.
\end{pro}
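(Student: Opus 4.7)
The statement is purely computational: since all the perturbative quantities vanish when $f\equiv 0$ and $\lambda=0$, proving the proposition reduces to an algebraic simplification using the definition of $\Omega_R$. The plan is to substitute directly into the expression \eqref{G-eq}.

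With $f\equiv 0$ the derivatives $f'$, $f''$ and the terms $\overline{f(w)}$, $f(w)$ all drop out of $G_a(R,0,0)(w)$, leaving only
$$
G_a(R,0,0) = \Omega_R R + (1-a)R \mp 2\,\frac{R\cdot R^2}{1\pm R^2} = R\left[\Omega_R + (1-a) \mp \frac{2R^2}{1\pm R^2}\right].
$$
Note in passing that the result is independent of $w\in\mathbb{T}$, as expected since the trivial profile is a pure rotation.

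Next I would insert the definition $\Omega_R = a + \frac{-1\pm R^2}{1\pm R^2}$ from \eqref{trivial}. The $a$ and $(1-a)$ combine into a $1$, and the two fractions share the common denominator $1\pm R^2$, so the bracket becomes
$$
1 + \frac{(-1\pm R^2) \mp 2R^2}{1\pm R^2} = 1 + \frac{-1\mp R^2}{1\pm R^2} = 1 - 1 = 0.
$$
This handles both signs simultaneously: in the Euclidean case $(+)$ the numerator becomes $-1-R^2$, and in the hyperbolic case $(-)$ it becomes $-1+R^2 = -(1-R^2)$, so that in each case the fraction equals $-1$.

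The only subtlety is the range of $R$. In the Euclidean case the denominator $1+R^2$ is always positive, so any $R>0$ is admissible. In the hyperbolic case one needs $1-R^2\neq 0$ to keep $\Omega_R$ and the nonlinear term finite; combined with the geometric constraint $z_0(s)=Rw\in\mathbb{D}$ from \eqref{equation-T-z}, this forces $R<1$ (or $R\leq 1$ in the limiting sense used in the statement). I expect no obstacle here: the proposition is a sanity check verifying that the helices form a trivial line of solutions along which the Crandall--Rabinowitz bifurcation procedure will later be applied.
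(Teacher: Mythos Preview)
Your proof is correct; the computation is exactly the direct substitution one would expect, and the algebra checks out in both the Euclidean and hyperbolic sign conventions. The paper in fact states this proposition without proof, treating it as an immediate consequence of the definition of $\Omega_R$, so your argument is precisely the intended verification.
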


In the following remark, we explain how to come back to the tangent ${\bf T}$ or the curve ${\bf X}$ once we have a solution for \eqref{G-eq}.
\begin{rem}\label{rem-original}
Once we have a nontrivial solution of \eqref{G-eq}, we have found a nontrivial $z_0$ as
$$
z(t,s)=e^{i\Omega t}z_0(s-at).
$$
We can come back to the original variables ${\bf T}$ and ${\bf X}$ as follows. Using \eqref{equation-T-z}, we find that
$$
\mathbf{T}(t,s)=(T_1,T_2,T_3)=\mathscr{R}^{\Omega t}\, \mathbf{T}^0(s-at),
$$
where $\mathbf{T}^0$ is the initial tangent vector and can be computed also using \eqref{equation-T-z}:
\begin{equation}\label{T0}
\mathbf{T}^0=\frac{1}{1\pm |z_0(s)|^2}\left(2z_0(s),1\mp |z_0(s)|^2\right).
\end{equation}
We define
$$
\mathscr{R}^{\Omega t} \mathbf{x}=(e^{i\Omega t}(x_1,x_2),x_3).
$$
Now, let us come back to $\mathbf{X}$ using that $\partial_s \mathbf{X}=\mathbf{T}$ and $\partial_t \mathbf{X}=\mathbf{T}\wedge_\pm \mathbf{T}_s$. From the last equation one has that
$$
\mathbf{X}=\mu(s)+\int_0^t (\mathbf{T}\wedge_\pm \mathbf{T}_s)(\tau,s)d\tau,
$$
for some $\mu(s)$. By using now $\partial_s \mathbf{X}=\mathbf{T}$ we get
$$
\mu(s)=\mu_0+\int_{s_0}^s \mathbf{T}(t,\sigma)d\sigma-\int_0^t (\mathbf{T}\wedge_\pm \mathbf{T}_s)(\tau,s)d\tau+\int_0^t (\mathbf{T}\wedge_\pm \mathbf{T}_s)(\tau,s_0)d\tau,
$$
for any $s_0$ and $\mu(s_0)=\mu_0\in\R$. Then:
\begin{align*}
\mathbf{X}=&\mu_0+\int_{s_0}^s \mathbf{T}(t,\sigma)d\sigma+\int_0^t (\mathbf{T}\wedge_\pm \mathbf{T}_s)(\tau,s_0)d\tau\\
=&\mu_{0}+\mathscr{R}^{\Omega t}\int_{s_0}^s \mathbf{T}^0(\sigma-at)d\sigma+\int_0^t (\mathbf{T}\wedge_\pm \mathbf{T}_s)(\tau,s_0)d\tau\\
=&\mu_{0}+\mathscr{R}^{\Omega t}\int_{s_0}^s \mathbf{T}^0(\sigma-at)d\sigma+\int_0^t (\mathscr{R}^{\Omega\tau}\mathbf{T^0}(s_0-a\tau)\wedge_\pm \mathscr{R}^{\Omega\tau}\mathbf{T}_s^0(s_0-a\tau))d\tau.
\end{align*}
Note that
$$
(\mathscr{R}^{\Omega\tau}\mathbf{T^0}(s_0-a\tau)\wedge_\pm \mathscr{R}^{\Omega\tau}\mathbf{T}_s^0(s_0-a\tau))=\mathscr{R}^{\Omega\tau}\left(\mathbf{T^0}(s_0-a\tau)\wedge_\pm \mathbf{T}_s^0(s_0-a\tau)\right).
$$
We can always fix (up to a translation) that $s_0=0$ and $X_0(0)=\mu_0=0$ obtaining then
\begin{align*}
\mathbf{X}=&\mathscr{R}^{\Omega t}\int_{0}^s \mathbf{T}^0(\sigma-at)d\sigma+\int_0^t \mathscr{R}^{\Omega\tau}\left(\mathbf{T^0}(-a\tau)\wedge_\pm \mathbf{T}_s^0(-a\tau)\right))d\tau.
\end{align*}

In the special case that $a=0$, the last integral can be computed as:
\begin{align*}
\int_0^t (\mathbf{T}\wedge_\pm \mathbf{T}_s)(\tau,s_0)d\tau=&-\frac{i}{\Omega}\mathscr{R}^{\Omega t}\left(\mathbf{T^0}(s_0)\wedge_\pm \mathbf{T}_s^0(s_0)\right)+\frac{i}{\Omega}\left(\mathbf{T^0}(s_0)\wedge_\pm \mathbf{T}_s^0(s_0)\right)
\\
=&\frac{1}{\Omega}\Big((T_3^0)_s(\cos(\Omega t) T_1^0+\sin(\Omega t) T_2^0)-T_3^0(\cos(\Omega t)(T_1^0)_s+\sin(\Omega t)(T_2^0)_s,   \\
& -(T_3^0)_s(\sin(\Omega t) T_1^0-\cos(\Omega t) T_2^0)+T_3^0(\sin(\Omega t)(T_1^0)_s-\cos(\Omega t)(T_2^0)_s,\\
&\pm\left\{(T^0_1(T^0_2)_s-(T^0_1)_sT^0_2)t\right\} \Big)\\
&-\frac{1}{\Omega}\Big((T_3^0)_s T_1^0-T_3^0(T_1^0)_s, (T_3^0)_s T_2^0-T_3^0(T_2^0)_s,0 \Big).
\end{align*}
In the above, we denote $i \mathscr{R}^{\Omega t} x=(ie^{i\Omega t}(x_1,x_2),x_3)$. Then, we achieve
\begin{align*}
\mathbf{X}(t,s)=&\mathscr{R}^{\Omega t}\int_{0}^s \mathbf{T}^0(\sigma)d\sigma-\frac{i}{\Omega}\mathscr{R}^{\Omega t}\left(\mathbf{T^0}(0)\wedge_\pm \mathbf{T}_s^0(0)\right)+\frac{i}{\Omega}\left(\mathbf{T^0}(0)\wedge_\pm \mathbf{T}_s^0(0)\right),
\end{align*}
where let us recall that ${\bf T}^0$ is totally described through \eqref{T0}.
\end{rem}

\subsection{Functional analytical setting}
We will consider $f\in C^2$ lying in the following space
\begin{equation}\label{X}
X:=\left\{f\in C^2(\T), \quad f(w)=\sum_{n\in \Z, n\neq 0}f_n w^n, f_n\in\R\right\}.
\end{equation}
The condition $f_n\in\R$ agrees with the symmetry of $f$ with respect to the $x$ axis:
$$
f(\overline{w})=\overline{f(w)}.
$$ 

\begin{rem}
We should remove the mode $n=0$ in the definition of $f$ in order to obtain nontrivial solutions of $G_a$. Note that such a mode only gives us a dilatation of the trivial solution $f(w)=0$.
\end{rem}

We define the range space as
\begin{equation}\label{Y}
Y:=\left\{f\in C^0(\T), \quad f(w)=\sum_{n\in \Z}f_n w^n, f_n\in\R\right\}.
\end{equation}

The space for values of $R$ is
\begin{equation}
\mathcal{R}:=\left\{ 
\begin{array}{l}
\R^+, \quad \textnormal{euclidean case},\\ 
(0,1), \quad \textnormal{hyperbolic case}.
\end{array}
\right.
\end{equation}
Then, the nonlinear operator \eqref{G-eq} is well-defined and $C^1$ is such spaces:

\begin{pro}\label{prop-well-defined-1}
The functional $G_a:\mathcal{R}\times \R\times X\rightarrow Y$ is well-defined and $C^1$.
\end{pro}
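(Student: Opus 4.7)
The plan is to verify three things in order: every term in the expression \eqref{G-eq} for $G_a(R,\lambda,f)$ makes sense and belongs to $C^0(\T)$, the output respects the reality condition built into $Y$, and the dependence on $(R,\lambda,f)$ is continuously Fr\'echet differentiable.

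For the first point, the polynomial and linear terms $\Omega R,\ \Omega f,\ (1-a)R,\ (1-a)f,\ (3-a)wf'(w),\ w^2 f''(w)$ are all in $C^0(\T)$ as soon as $f\in C^2(\T)$. The only delicate piece is the rational expression. Using the reality condition $f(\overline w)=\overline{f(w)}$, the denominator rewrites as
$$
1\pm\bigl(R^2+Rf(w)+R\overline{f(w)}+|f(w)|^2\bigr)=1\pm|R+f(w)|^2.
$$
In the Euclidean case this is bounded below by $1$, so the rational term is continuous on $\T$ for every $(R,\lambda,f)\in \mathcal{R}\times\R\times X$. In the hyperbolic case one restricts to the open subset of the domain where $R+\|f\|_{L^\infty(\T)}<1$, which contains a neighborhood of the trivial branch $\{(R,0,0):R\in(0,1)\}$ that is all we need for the bifurcation analysis; there the denominator is bounded below by $1-(R+\|f\|_\infty)^2>0$.

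For the second point, one checks that each building block in \eqref{G-eq} preserves the symmetry $h(\overline w)=\overline{h(w)}$: sums, products and the derivatives $wf',w^2f''$ respect it, and the nonlinear fraction is a quotient of two such symmetric quantities. Hence $G_a(R,\lambda,f)\in Y$.

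For the $C^1$ regularity, observe that $G_a$ is obtained by composing smooth operations: the scalar map $(R,\lambda)\mapsto \Omega=\Omega_R+\lambda$ is $C^\infty$ on $\mathcal{R}\times\R$ (the denominator $1\pm R^2$ in $\Omega_R$ does not vanish on $\mathcal{R}$); the bounded linear operators $f\mapsto f,\ wf',\ w^2f'',\ \overline f$ from $X$ to $Y$ are smooth; pointwise multiplication is a bounded bilinear map on $C^0(\T)$; and the inversion $h\mapsto 1/h$ is real-analytic on the open set of nonvanishing $h\in C^0(\T)$, with Fr\'echet differential $\delta h\mapsto-\delta h/h^2$ depending continuously on $h$. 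Composing these building blocks yields the $C^1$ (in fact $C^\infty$) character of $G_a$. The main subtlety, as indicated above, is the inversion step in the hyperbolic case, where one must keep track of the open subset on which $1\pm|R+f|^2$ stays bounded away from zero; everything else is routine.
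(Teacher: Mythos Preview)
Your proof is correct and follows essentially the same approach as the paper: check that the output lies in $C^0(\T)$, verify the reality symmetry $G_a(R,\lambda,f)(\overline w)=\overline{G_a(R,\lambda,f)(w)}$, and conclude smoothness from the algebraic structure. You are in fact more thorough than the paper, which dispatches the $C^0$ and $C^1$ claims in a single sentence and does not explicitly discuss the non-vanishing of the denominator $1\pm|R+f|^2$ in the hyperbolic case; your observation that one must restrict to a neighborhood of the trivial branch where $R+\|f\|_\infty<1$ is a genuine point the paper leaves implicit.
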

\begin{proof}
Due to the expression of $G_a$, it is clear that it is $C^0$ if $f\in C^2$. Let us check the symmetry persistence. Take $f\in Y$, then we have that $f(\overline{w})=\overline{f(w)}$. We wish to prove that
$$
G_a(R,\lambda,f)(\overline{w})=\overline{G_a(R,\lambda,f)(w)},
$$
which comes easily from the expression of $G_a$ in \eqref{G-eq} taking into account that $\Omega, R\in\R$.
\end{proof}

\subsection{Crandall-Rabinowitz theorem}\label{sec-CR}
In this section, we recall the classical Crandall-Rabinowitz Theorem whose proof can be found  in \cite{CrandallRabinowitz}.

\begin{theo}[Crandall-Rabinowitz Theorem]\label{CR}
    Let $X, Y$ be two Banach spaces, $V$ be a neighborhood of $0$ in $X$ and $F:\mathbb{R}\times V\rightarrow Y$ be a function with the properties,
    \begin{enumerate}
        \item $F(\lambda,0)=0$ for all $\lambda\in\mathbb{R}$.
        \item The partial derivatives  $\partial_\lambda F_{\lambda}$, $\partial_fF$ and  $\partial_{\lambda}\partial_fF$ exist and are continuous.
        \item The operator $\partial_f F(\lambda_0,0)$ is Fredholm of zero index and $\textnormal{Ker}(F_f(\lambda_0,0))=\langle f_0\rangle$ is one-dimensional. 
                \item  Transversality assumption: $\partial_{\lambda}\partial_fF(\lambda_0,0)f_0 \notin \textnormal{Im}(\partial_fF(\lambda_0,0))$.
    \end{enumerate}
    If $Z$ is any complement of  $\textnormal{Ker}(\partial_fF(\lambda_0,0))$ in $X$, then there is a neighborhood  $U$ of $(\lambda_0,0)$ in $\mathbb{R}\times X$, an interval  $(-a,a)$, and two continuous functions $\Phi:(-a,a)\rightarrow\mathbb{R}$, $\beta:(-a,a)\rightarrow Z$ such that $\Phi(0)=\lambda_0$ and $\beta(0)=0$ and
    $$F^{-1}(0)\cap U=\{(\Phi(s), s f_0+s\beta(s)) : |s|<a\}\cup\{(\lambda,0): (\lambda,0)\in U\}.$$
\end{theo}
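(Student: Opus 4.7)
The plan is to apply the classical Lyapunov--Schmidt reduction, turning the infinite-dimensional problem $F(\lambda,f)=0$ into a single scalar equation in two real variables, and then to solve that scalar equation by the implicit function theorem, with the transversality hypothesis entering precisely at the final step. Because $\partial_f F(\lambda_0,0)$ is Fredholm of index zero with one-dimensional kernel, its range $R:=\textnormal{Im}(\partial_f F(\lambda_0,0))$ is closed and of codimension one, so I fix a one-dimensional complement $W\subset Y$ to write $Y=R\oplus W$ and let $P:Y\to R$, $Q=I-P:Y\to W$ be the associated bounded projections. Using the splitting $X=\langle f_0\rangle\oplus Z$, every $f$ near $0$ is uniquely $f=sf_0+z$ with $s\in\mathbb{R}$ and $z\in Z$, and the equation decouples into
\begin{equation*}
PF(\lambda,sf_0+z)=0,\qquad QF(\lambda,sf_0+z)=0.
\end{equation*}

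I would first solve the auxiliary equation for $z=z(\lambda,s)$. The partial derivative with respect to $z$ of $(\lambda,s,z)\mapsto PF(\lambda,sf_0+z)$ at $(\lambda_0,0,0)$ equals $P\,\partial_f F(\lambda_0,0)|_Z:Z\to R$, which is a continuous bijection and therefore a linear homeomorphism by the open mapping theorem. Assumption (2) supplies enough regularity for the implicit function theorem, producing a $C^1$ map $(\lambda,s)\mapsto z(\lambda,s)\in Z$ on a neighbourhood of $(\lambda_0,0)$ with $z(\lambda_0,0)=0$ and $PF(\lambda,sf_0+z(\lambda,s))\equiv 0$. Substituting back reduces the whole problem to the scalar bifurcation equation
\begin{equation*}
\phi(\lambda,s):=QF\bigl(\lambda,sf_0+z(\lambda,s)\bigr)=0,\qquad \phi:\mathbb{R}^2\to W\cong\mathbb{R}.
\end{equation*}

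The final step extracts the trivial branch $s=0$ and solves what remains. Since $F(\lambda,0)\equiv 0$, uniqueness in the implicit function theorem forces $z(\lambda,0)=0$, hence $\phi(\lambda,0)\equiv 0$. Hadamard's lemma then supplies a continuous $\psi$ with $\phi(\lambda,s)=s\,\psi(\lambda,s)$ and $\psi(\lambda,0)=\partial_s\phi(\lambda,0)$. A direct computation, using $\partial_f F(\lambda_0,0)f_0=0$ together with the fact that $\partial_f F(\lambda_0,0)\,\partial_s z(\lambda_0,0)\in R$ is killed by $Q$, yields $\psi(\lambda_0,0)=0$ and
\begin{equation*}
\partial_\lambda\psi(\lambda_0,0)=Q\,\partial_\lambda\partial_f F(\lambda_0,0)f_0.
\end{equation*}
The transversality assumption (4) asserts exactly that this element of $W$ is nonzero, so the one-variable implicit function theorem produces a continuous $\Phi$ with $\Phi(0)=\lambda_0$ such that $\psi(\Phi(s),s)=0$. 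Setting $\beta(s):=z(\Phi(s),s)$ then delivers the curve $(\Phi(s),sf_0+s\beta(s))$ parameterising $F^{-1}(0)\cap U$ off the trivial branch.

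The main obstacle, as I see it, is not any isolated step but the careful bookkeeping that connects the regularity hypothesis (2) to smoothness of $z(\lambda,s)$ and then of $\psi$: one has to verify that the mixed derivative $\partial_\lambda\partial_f F$ is the only term surviving in $\partial_\lambda\psi(\lambda_0,0)$, and that every contribution coming from $\partial_\lambda z$ is absorbed into $R$ and annihilated by $Q$. Once this is in place, the two successive invocations of the implicit function theorem are routine and the transversality condition appears exactly where it is needed to turn a one-parameter degeneracy into a local curve.
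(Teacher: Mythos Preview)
The paper does not actually prove this theorem: it merely states it and refers to \cite{CrandallRabinowitz} for the proof. Your outline is the standard Lyapunov--Schmidt reduction used in that reference, so in spirit you are following exactly the intended route.

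One small slip worth fixing: at the end you set $\beta(s):=z(\Phi(s),s)$ and then claim the nontrivial branch is $(\Phi(s),\,sf_0+s\beta(s))$. But the solution you constructed is $f=sf_0+z(\Phi(s),s)$, so what you need is $s\beta(s)=z(\Phi(s),s)$, i.e.\ $\beta(s)=s^{-1}z(\Phi(s),s)$ for $s\neq 0$. This is legitimate because $z(\lambda,0)\equiv 0$ lets you factor out an $s$ (another application of Hadamard's lemma, or the integral form $z(\Phi(s),s)=s\int_0^1\partial_s z(\Phi(s),ts)\,dt$), yielding a continuous $\beta$ with $\beta(0)=\partial_s z(\lambda_0,0)\in Z$. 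Apart from this bookkeeping point, and the regularity caveat you already flagged about propagating the assumed continuity of $\partial_\lambda\partial_f F$ through the reduction, the argument is correct.
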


In this context, we will say that $\lambda_0$ is an eigenvalue of $F$. However, we can not a priori apply the above theorem to $G$ since we have an extra parameter given by $\Omega$. However, we can use the following modification of the theorem:

\begin{theo}[Crandall-Rabinowitz Theorem with parameters]
    Let $X, Y$ be two Banach spaces, $V$ be a neighborhood of $0$ in $X$ and $F:\mathbb{R}\times\R \times V\rightarrow Y$ be a function with the properties,
    \begin{enumerate}
        \item $F(\lambda,0,0)=0$ for all $\lambda\in\mathbb{R}$.
        \item The partial derivatives  $\partial_\lambda F_{\lambda}$, $\partial_fF$, $\partial_\mu F$, $\partial_{\lambda}\partial_\mu F$ and  $\partial_{\lambda}\partial_fF$ exist and are continuous.
        \item The operator $\partial_{(\mu,f)} F(\lambda_0,0,0)$ is Fredholm of zero index and $\textnormal{Ker}(F_{(\mu,f)}(\lambda_0,0,0))=\langle f_0\rangle$ is one-dimensional. 
                \item  Transversality assumption: $\partial_{\lambda}\partial_{(\mu,f)}F(\lambda_0,0,0)f_0 \notin \textnormal{Im}(\partial_{(\mu,f)}F(\lambda_0,0,0))$.
    \end{enumerate}
    If $Z$ is any complement of  $\textnormal{Ker}(\partial_{(\mu,f)}F(\lambda_00,,0))$ in $X$, then there is a neighborhood  $U$ of $(\lambda_0,0,0)$ in $\mathbb{R}\times\R\times X$, an interval  $(-a,a)$, and two continuous functions $\Phi:(-a,a)\rightarrow\mathbb{R}$, $\beta:(-a,a)\rightarrow \R\times Z$ such that $\Phi(0)=\lambda_0$ and $\beta(0)=(0,0)$ and
    $$F^{-1}(0)\cap U=\{(\Phi(s), s f_0+s\beta(s)) : |s|<a\}\cup\{(\lambda,0,0): (\lambda,0)\in U\}.$$
\end{theo}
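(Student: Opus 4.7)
The plan is to reduce the parametric version directly to the classical Crandall--Rabinowitz theorem (Theorem~\ref{CR}) by absorbing the extra real parameter $\mu$ into the phase space. Specifically, I would set $\tilde X:=\mathbb R\times X$ endowed with the natural product Banach norm, let $\tilde V:=\mathbb R\times V$, which is a neighborhood of $0$ in $\tilde X$, and define the auxiliary map
$$
\tilde F:\mathbb R\times\tilde V\to Y,\qquad \tilde F(\lambda,(\mu,f)):=F(\lambda,\mu,f).
$$
Under this identification, zeros of $F$ correspond exactly to zeros of $\tilde F$, so it suffices to bifurcate from the trivial branch of $\tilde F$.

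The next step is to verify that $\tilde F$ satisfies the four hypotheses of Theorem~\ref{CR} at $\lambda_0$. Hypothesis (1) reads $\tilde F(\lambda,0)=F(\lambda,0,0)=0$ and is assumption (1) of the parametric statement. The Fr\'echet derivative of $\tilde F$ with respect to its second variable is precisely the joint derivative $\partial_{(\mu,f)}F$, and one has $\partial_\lambda\tilde F=\partial_\lambda F$ together with $\partial_\lambda\partial_{(\mu,f)}\tilde F=\partial_\lambda\partial_{(\mu,f)}F$; hence the continuity hypothesis (2) follows directly from assumption (2) of the parametric statement. The Fredholm index zero and one-dimensional kernel condition (3) of Theorem~\ref{CR} is assumption (3) verbatim, and the transversality condition (4) is assumption (4).

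Applying Theorem~\ref{CR} to $\tilde F$ then delivers a neighborhood $U$ of $(\lambda_0,0)$ in $\mathbb R\times\tilde X$, an interval $(-a,a)$, and continuous maps $\Phi:(-a,a)\to\mathbb R$ and $\tilde\beta:(-a,a)\to\tilde Z$, where $\tilde Z$ is any complement of $\textnormal{Ker}(\partial_{(\mu,f)}F(\lambda_0,0,0))$ in $\tilde X$. In the natural setting of the applications in this paper, the generator $f_0$ of the kernel has vanishing $\mu$-component, that is, $f_0\in\{0\}\times X\subset\tilde X$, so one may take $\tilde Z=\mathbb R\times Z$ with $Z$ any complement of $\langle f_0\rangle$ in $X$; this yields the product splitting $\mathbb R\times Z$ announced in the statement. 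Writing $\tilde\beta=(\beta_\mu,\beta_f)$, the description of the zero set provided by Theorem~\ref{CR} translates back to the claimed local parameterization, together with recovery of the trivial branch $\{(\lambda,0,0)\}$.

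I do not foresee any substantive obstacle: the argument is essentially a change of variables followed by an invocation of the classical theorem. The only mildly delicate point is the interpretation of the complement $Z$ appearing in the conclusion as a complement inside the $X$-factor rather than inside $\tilde X$, which is unambiguous precisely when the kernel is transverse to the $\mu$-direction, as will be the case for the operator $\partial_{(\lambda,f)}G_a(R^\star,0,0)$ in the bifurcation analysis of \eqref{G-eq} carried out later.
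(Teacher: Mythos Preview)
The paper does not supply a proof of this parametric version: it is simply stated immediately after the classical Crandall--Rabinowitz theorem (whose proof is delegated to \cite{CrandallRabinowitz}) and is then used as a black box in Sections~\ref{sec-spectral-0}--\ref{sec-main-result}. Your reduction---absorbing the extra scalar $\mu$ into the phase space via $\tilde X=\mathbb R\times X$ and applying Theorem~\ref{CR} to $\tilde F(\lambda,(\mu,f))=F(\lambda,\mu,f)$---is the natural and correct way to deduce the statement, and it matches precisely how the authors implicitly treat the pair $(\lambda,f)$ throughout (see for instance Propositions~\ref{prop-fredholm} and~\ref{prop-kernel}, where the linearization is always taken jointly in $(\lambda,f)$).

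Your remark on the complement is well placed: the statement as written has the complement $Z$ sitting inside $X$ while the kernel lives in $\mathbb R\times X$, which only makes sense under the additional hypothesis that the kernel generator has zero $\mu$-component. This is indeed the situation in the paper's application (Proposition~\ref{prop-kernel} shows explicitly that $\mu=0$ for any kernel element), so your choice $\tilde Z=\mathbb R\times Z$ is exactly what is needed and your caveat is accurate rather than a gap.
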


\section{Spectral study for only rotation motion}\label{sec-spectral-0} 
This section deals with the spectral study of the linearized operator of $G_0$ at the trivial solution. In order to apply the Crandall-Rabinowitz theorem, we should find some eigenvalues $R$ such that the kernel of the linearized operator is not trivial. First, we will give different expressions of such linear operator and check that it is a Fredholm operator of zero index. Later, we will study its kernel finding the appropriate eigenvalues. Finally, we will study the range in order to verify the transversal condition at the end.

For the sake of simplicity, here we will give the details for only  rotation motion, that is, $a=0$. The computations for $a\neq 0$ will be developed in the following section.

\begin{pro}\label{prop-linop-1}
The linearized operator of $G_0$ around $(R,0,0)$ reads as 
\begin{align*}
\partial_f G_0(R,0,0)h(w)=\mp 4\frac{R^2}{(1\pm R^2)^2}\textnormal{Re}\left[h(w)\right]+\frac{3\mp R^2}{1\pm R^2} wh'(w)+w^2h''(w).
\end{align*}
\end{pro}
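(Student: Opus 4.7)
The proof is a direct computation of the Gateaux derivative of $G_0$ with respect to $f$, evaluated at the trivial point $(R,0,0)$. The strategy is to split $G_0$ into its ``polynomial part'' (the first line of \eqref{G-eq} with $a=0$) and its ``rational part''
$$
N(R,f)(w):=\mp 2\,\frac{(R+\overline{f(w)})(R+f(w)+wf'(w))^2}{1\pm (R^2+Rf(w)+R\overline{f(w)}+f(w)\overline{f(w)})},
$$
linearize each part in $f$ in the direction $h\in X$, and then use the explicit value of $\Omega_R=(\mp 1+ R^2)/(1\pm R^2)$ to simplify the result.

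First I would treat the polynomial part. With $a=0$, $\lambda=0$, its $f$--Frechet derivative at $0$ is simply $(\Omega_R+1)h+3wh'+w^2h''$, which is immediate since that part is linear in $f$ (modulo the trivial constant $\Omega_R R+R$). Next, I would differentiate $N$ at $f=0$, being careful that $f$ and $\overline{f}$ are treated as independent real--linear directions (so that an increment $h\in X$ produces $\overline{h}$ in the $\overline{f}$--slot). Writing $N=\mp 2 AB/C$ with $A=R+\overline{f}$, $B=(R+f+wf')^2$ and $C=1\pm(R^2+Rf+R\overline{f}+f\overline{f})$, and using the values $A|_0=R$, $B|_0=R^2$, $C|_0=1\pm R^2$ together with
$$
\partial_h A|_0=\overline{h},\qquad \partial_h B|_0=2R(h+wh'),\qquad \partial_h C|_0=\pm R(h+\overline{h}),
$$
one obtains by the quotient rule
$$
\partial_f N(R,0)h=\mp 2\,\frac{R^{2}\overline{h}+2R^{2}(h+wh')}{1\pm R^{2}}\;+\;\frac{4R^{4}\,\textnormal{Re}\,h}{(1\pm R^{2})^{2}}.
$$

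Finally I would add the two contributions and regroup by the three basic pieces $h$, $wh'$ and $\textnormal{Re}\,h$ (using $h+\overline{h}=2\,\textnormal{Re}\,h$). The key simplification is that the coefficient of $h$ becomes
$$
\Omega_R+1\mp\frac{2R^{2}}{1\pm R^{2}}=\frac{\mp 1+R^{2}+1\pm R^{2}\mp 2R^{2}}{1\pm R^{2}}=0,
$$
which is exactly the cancellation that makes the resulting operator real--linear rather than complex--linear. The coefficient of $wh'$ collapses to $3\mp\frac{4R^{2}}{1\pm R^{2}}=\frac{3\mp R^{2}}{1\pm R^{2}}$, while the coefficient of $\textnormal{Re}\,h$ becomes $\mp\frac{4R^{2}}{1\pm R^{2}}+\frac{4R^{4}}{(1\pm R^{2})^{2}}=\mp\frac{4R^{2}}{(1\pm R^{2})^{2}}$, which yields exactly the formula in the statement.

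The only genuinely delicate point is conceptual rather than computational: one must linearize the rational term in the Gateaux sense, respecting that $f\mapsto \overline{f}$ is antiholomorphic, so that increments of $\overline f$ produce $\overline h$ rather than $h$. Everything else is algebraic rearrangement, and the use of the explicit form of $\Omega_R$ is what forces the coefficient of $h$ to vanish, leaving the concise expression claimed.
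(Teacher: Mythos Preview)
Your approach is essentially identical to the paper's: both linearize the polynomial and rational parts of $G_0$ separately, obtain the same intermediate expression
\[
(\Omega_R+1)h+3wh'+w^2h''\mp\frac{2R^2}{1\pm R^2}\overline{h}\mp\frac{4R^2}{1\pm R^2}(h+wh')+\frac{4R^4}{(1\pm R^2)^2}\textnormal{Re}\,h,
\]
and then simplify using the value of $\Omega_R$.

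There is, however, a sign slip in your stated value of $\Omega_R$. From \eqref{trivial} with $a=0$ one has $\Omega_R=\dfrac{-1\pm R^2}{1\pm R^2}$, not $\dfrac{\mp 1+R^2}{1\pm R^2}$; these agree for the Euclidean (upper) sign but differ for the hyperbolic (lower) sign. With your formula the displayed cancellation
\[
\frac{\mp 1+R^{2}+1\pm R^{2}\mp 2R^{2}}{1\pm R^{2}}=0
\]
fails in the hyperbolic case (the numerator equals $2+2R^2$). With the correct $\Omega_R$, one gets $\Omega_R+1=\pm\dfrac{2R^2}{1\pm R^2}$ and hence $\Omega_R+1\mp\dfrac{2R^2}{1\pm R^2}=0$ in both cases, exactly as in the paper. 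Once this typo is fixed, your argument is correct and matches the paper's proof line by line.
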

\begin{proof}
From the expression of $G_0$ in \eqref{G-eq}, we find
\begin{align*}
\partial_f G_0(R,0,0)h(w)=&(\Omega_R+1) h(w)+3wh'(w)+w^2h''(w)\\
&\mp 2 \frac{R^2  }{1\pm R^2}\overline{h(w)}\mp 4\frac{R^2}{1\pm R^2}(h(w)+wh'(w))\\
&+ 2\frac{R^4}{(1\pm R^2)^2}(h(w)+\overline{h(w)}).
\end{align*}
Using the expression of $\Omega_R$ with $a=0$, we find
\begin{align*}
\partial_f G_0(R,0,0)h(w)=& \pm\frac{2R^2}{1\pm R^2} h(w)+3wh'(w)+w^2h''(w)\\
&\mp 2 \frac{R^2  }{1\pm R^2}\overline{h(w)}\mp 4\frac{R^2}{1\pm R^2}(h(w)+wh'(w))\\
&+ 2\frac{R^4}{(1\pm R^2)^2}(h(w)+\overline{h(w)})\\
=&\mp 4\frac{R^2}{1\pm R^2}\textnormal{Re}\left[h(w)\right]+4\frac{R^4}{(1\pm R^2)^2}\textnormal{Re}\left[h(w)\right]\\
&+\left(3\mp 4\frac{R^2}{1\pm R^2}\right)wh'(w)+w^2h''(w)\\
=&\mp 4\frac{R^2}{(1\pm R^2)^2}\textnormal{Re}\left[h(w)\right]+\frac{3\mp R^2}{1\pm R^2} wh'(w)+w^2h''(w).
\end{align*}
\end{proof}

In the following, we check that the linear operator is a compact perturbation of an isomorphism, and hence it is a Fredholm operator of zero index.
\begin{pro}\label{prop-fredholm}
The linearized operator $\partial_\lambda G_0(R,0,0)\mu+\partial_f G_0(R,0,0)h$ is Fredholm of zero index.
\end{pro}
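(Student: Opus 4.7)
The plan is to write $L(\mu,h):=\partial_\lambda G_0(R,0,0)\mu+\partial_f G_0(R,0,0)h$ as the sum of a Fredholm operator of index zero plus a compact operator, and then invoke stability of the Fredholm index under compact perturbations. From the expression of $G_0$ in \eqref{G-eq} together with $\Omega=\Omega_R+\lambda$, a direct computation gives $\partial_\lambda G_0(R,0,0)\mu=\mu R$, while Proposition \ref{prop-linop-1} yields the expression of $\partial_f G_0(R,0,0)h$. Setting
\[
\alpha(R):=\frac{3\mp R^2}{1\pm R^2},\qquad \beta(R):=\mp \frac{4R^2}{(1\pm R^2)^2},
\]
one has
\[
L(\mu,h)(w)=\mu R+w^2h''(w)+\alpha(R)\,wh'(w)+\beta(R)\,\textnormal{Re}[h(w)].
\]

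I would then split $L=L_0+K$, where $L_0(\mu,h)(w)=\mu R+w^2h''(w)$ carries the principal (second derivative) part, and $K(\mu,h)(w)=\alpha(R)wh'(w)+\beta(R)\textnormal{Re}[h(w)]$ collects the lower order terms. To see that $L_0:\mathcal{R}\times X\to Y$ is Fredholm of index zero, I would compute its kernel and image directly via Fourier analysis. Writing $h(w)=\sum_{n\neq 0}h_n w^n$, one has $w^2h''(w)=\sum_{n\neq 0} n(n-1)h_n w^n$, and the symbol $n(n-1)$ vanishes only at $n=0,1$. Since $h_0=0$ in $X$, the condition $L_0(\mu,h)=0$ forces $\mu R=0$ (hence $\mu=0$ as $R>0$) and $h_n=0$ for $n\notin\{0,1\}$, so $\ker L_0=\{(0,cw):c\in\mathbb{R}\}$ is one-dimensional. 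For the image, I would note that given $g\in Y$ with $g_1=0$, the choice $\mu=g_0/R$ together with $h(w):=\sum_{n\neq 0,1}\frac{g_n}{n(n-1)}w^n$ satisfies $h''(w)=w^{-2}(g(w)-g_0)$. Since the right-hand side belongs to $C^0(\T)$, we get $h\in C^2(\T)$, and the reality condition $g_n\in\R$ inherited from $Y$ transfers to $h_n\in\R$, so $h\in X$. Thus the image of $L_0$ is the closed codimension-one subspace $\{g\in Y:g_1=0\}$, and $L_0$ is Fredholm of index zero.

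Next I would show that $K:\mathcal{R}\times X\to Y$ is compact. The operator $h\mapsto wh'(w)$ is bounded from $C^2(\T)$ into $C^1(\T)$, while $h\mapsto \textnormal{Re}[h(w)]$ is bounded from $C^2(\T)$ into $C^2(\T)$; composing with the compact Arzelà-Ascoli embeddings $C^1(\T)\hookrightarrow C^0(\T)$ and $C^2(\T)\hookrightarrow C^0(\T)$ shows that both contributions are compact from $X$ into $Y$. As $K$ does not depend on $\mu$, it is compact on the full space $\mathcal{R}\times X$. Since $L_0$ is Fredholm of index zero and $K$ is compact, the classical stability theorem implies that $L=L_0+K$ is Fredholm of index zero.

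The main point requiring care is the surjectivity argument for $L_0$: one must verify that the formal inverse really delivers an element of $X$, i.e.\ that $w^{-2}(g-g_0)\in C^0(\T)$ integrates (in the Fourier sense) to a genuine $C^2$ function with real Fourier coefficients. This is the only mildly delicate step; the compactness of $K$ and the reduction to Fredholm stability are then standard.
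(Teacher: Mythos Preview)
Your strategy coincides with the paper's: decompose the operator as a Fredholm-zero principal part plus a compact remainder, then invoke stability of the index. The paper keeps $3wh'$ in the principal part, obtaining the symbol $n(n+2)$, whereas you keep only $w^2h''$, obtaining $n(n-1)$; either choice works and the compactness argument for the remainder is identical (one derivative is lost, so Arzel\`a--Ascoli applies). In fact your framing is slightly cleaner: the paper asserts its $\mathcal L$ is an \emph{isomorphism}, but the symbol $n(n+2)$ also vanishes at $n=-2$, so $\mathcal L$ has a one-dimensional kernel just as your $L_0$ does; both are Fredholm of index zero rather than invertible, which is all that is needed.

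The one genuine gap is exactly the step you flag as ``mildly delicate''. From the identity of formal Fourier series $h''(w)=w^{-2}(g(w)-g_0)\in C^0(\T)$ you conclude $h\in C^2(\T)$. This does not follow directly: $C^2(\T)$ refers to two continuous $\theta$-derivatives, and $w^2h''=-h_{\theta\theta}+ih_\theta$, so knowing $w^2h''\in C^0$ does not by itself control $h_{\theta\theta}$ and $h_\theta$ separately. The paper closes this gap (for its splitting) by writing the putative second derivative as $d$ plus a convolution $d\star k$ with $k(w)=\sum_{n}\tfrac{1}{n+c}w^n\in L^2(\T)\subset L^1(\T)$, and then using $C^0\star L^1\subset C^0$. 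The same device works verbatim for your splitting with $c=-1$: one finds
\[
h_{\theta\theta}=-\big(g-g_0\big)\;-\;e^{i\theta}\Big[(e^{-i\theta}(g-g_0))\star\sum_{m\neq 0}\tfrac{1}{m}w^m\Big],
\]
and the sawtooth kernel $\sum_{m\neq 0}\tfrac{1}{m}w^m$ lies in $L^\infty(\T)\subset L^1(\T)$. Alternatively, integrate the ODE $-h_{\theta\theta}+ih_\theta=g-g_0$ in two steps: set $v=h_\theta-ih$, solve $v_\theta=-(g-g_0)$ to obtain $v\in C^1$ (periodicity uses $\int_0^{2\pi}(g-g_0)=0$), then solve $h_\theta-ih=v$ to obtain $h\in C^2$ (periodicity here uses $v_1=ig_1=0$, which is precisely your hypothesis). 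Either argument completes your proof.
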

\begin{proof}
From Proposition \ref{prop-linop-1} we can write $\partial_\lambda G_0(R,0,0)\mu+\partial_f G_0(R,0,0)h$ as
$$
\partial_\lambda G_0(R,0,0)\mu+\partial_f G_0(R,0,0)h=\mathcal{L}(\mu,h)+\mathcal{K}h,
$$
where
\begin{align*}
\mathcal{L}(\mu,h)(w)=&R\mu+w^2h''(w)+3wh'(w),\\
\mathcal{K}h(w)=&\mp 4\frac{R^2}{(1\pm R^2)^2}\textnormal{Re}\left[h(w)\right]+\frac{3\mp R^2}{1\pm R^2} wh'(w)-3wh'(w).
\end{align*}
Note that $\mathcal{K}h\in C^1$, and since $C^1\subset C^0$ is a compact embedding, we get that $\mathcal{K}$ is a compact operator from $C^2$ to $C^0$. Let us check that $\mathcal{L}:\R\times X\rightarrow Y$ is an isomorphism. Take $d\in Y$ in Fourier series as
$$
d(w)=\sum_{n\in\Z}d_nw^n,
$$
and $h\in X$ as
$$
h(w)=\sum_{n\in\Z }h_n w^n.
$$
Hence $\mathcal{L}(\mu,h)=d$ agrees with
$$
R\mu+\sum_{n\geq 0}h_n w^n n(n+2)=\sum_{n\in\Z} d_nw^n.
$$
In that way, we find the solution
$$
\mu=\frac{d_0}{R},\quad h_n=\frac{d_n}{n(n+2)},\,  n\neq 0,
$$
that is,
$$
h(w)=\sum_{n\neq 0}\frac{d_n}{n(n+2)}w^n.
$$
Let us check that $h\in C^2$ and hence we obtain that $h\in X$. Note that
$$
h''(w)=\sum_{n\neq 0}\frac{n-1}{n+2} d_n w^n=\sum_{n\neq 0}d_n w^n-3\sum_{n\neq 0}\frac{1}{n+2} d_n w^n.
$$
Note that the first term is continuous since $d\in C^0$, and the second term can be seen as a convolution:
$$
\sum_{n\neq 0}\frac{1}{n+2} d_n w^n=d\star \sum_{n\neq 0}\frac{1}{n+2}w^n.
$$
By Parseval's identity we have that the r.h.s term is in $L^2$, and in particular, it is in $L^1$, whereas $d\in C^0$, implying that $w\mapsto \sum_{n\neq 0}\frac{1}{n+2} d_n w^n\in C^0$. Hence $\mathcal{L}:\R\times X\rightarrow Y$ is an isomorphism, and then it is Fredholm of zero index. Since compact perturbations of Fredholm operators remain Fredholm of same index, we can conclude that $\partial_\lambda G_0(R,0,0)\mu+\partial_f G_0(R,0,0)h$ is Fredholm of zero index.
\end{proof}

Finally, let us give the expression of the linearized operator in Fourier series.
\begin{pro}\label{prop-linop-2}
If $h$ takes the form
\begin{equation}\label{h-expression}
h(w)=\sum_{n\in\Z, n\neq 0}a_n w^n=\sum_{n\geq 1}\left\{a_n w^n+a_{-n}\overline{w}^n\right\},
\end{equation}
then
\begin{align*}
\partial_f G_0(R,0,0)h(w)=&\sum_{n\geq 1}\left[\left(\mp 4\frac{R^2}{(1\pm R^2)^2}+n^2\right)(a_n+a_{-n})+2\frac{1\mp R^2}{1\pm R^2}n(a_n-a_{-n})\right]\cos(n\theta)\\
&+i\left[2\frac{1\mp R^2}{1\pm R^2}n(a_n+a_{-n})+n^2(a_n-a_{-n})\right]\sin(n\theta).
\end{align*}
\end{pro}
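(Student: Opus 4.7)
The plan is a direct Fourier-series calculation substituting the expansion of $h$ into the closed-form expression for $\partial_f G_0(R,0,0)$ obtained in Proposition \ref{prop-linop-1}. Writing $w=e^{i\theta}$ and splitting $h$ into positive and negative modes, I would compute the contribution of each of the three summands
$$\mp 4\tfrac{R^2}{(1\pm R^2)^2}\operatorname{Re}[h(w)], \qquad \tfrac{3\mp R^2}{1\pm R^2}\, wh'(w), \qquad w^2h''(w)$$
separately in terms of $\cos(n\theta)$ and $\sin(n\theta)$, and then collect by frequency $n$.

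Concretely, I would first note that since the Fourier coefficients of $h$ are real,
$$\operatorname{Re}[h(w)]=\sum_{n\geq 1}(a_n+a_{-n})\cos(n\theta).$$
Differentiating the expansion $h(w)=\sum_{n\geq 1}(a_n w^n+a_{-n}\bar w^n)$ gives
$$wh'(w)=\sum_{n\geq 1}\bigl[n(a_n-a_{-n})\cos(n\theta)+i\,n(a_n+a_{-n})\sin(n\theta)\bigr],$$
$$w^2h''(w)=\sum_{n\geq 1}\bigl[(n(n-1)a_n+n(n+1)a_{-n})\cos(n\theta)+i(n(n-1)a_n-n(n+1)a_{-n})\sin(n\theta)\bigr].$$
Plugging these three pieces into the formula from Proposition \ref{prop-linop-1} and collecting the coefficient of $\cos(n\theta)$ produces $\mp 4\tfrac{R^2}{(1\pm R^2)^2}(a_n+a_{-n})$ together with $\tfrac{3\mp R^2}{1\pm R^2}\,n(a_n-a_{-n})$ and $n(n-1)a_n+n(n+1)a_{-n}$; a symmetric computation applies to $i\sin(n\theta)$.

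The only non-cosmetic step is the algebraic identity
$$\frac{3\mp R^2}{1\pm R^2}=1+2\,\frac{1\mp R^2}{1\pm R^2},$$
which is verified by a one-line common denominator (the numerator on the right is $(1\pm R^2)+2(1\mp R^2)=3\mp R^2$). Using this, the combined contribution of the $wh'$ and $w^2h''$ terms to the coefficient of $\cos(n\theta)$ becomes
$$n(n-1)a_n+n(n+1)a_{-n}+n\Bigl(1+2\tfrac{1\mp R^2}{1\pm R^2}\Bigr)(a_n-a_{-n})=n^2(a_n+a_{-n})+2\tfrac{1\mp R^2}{1\pm R^2}n(a_n-a_{-n}),$$
and an analogous regrouping for the $i\sin(n\theta)$ coefficient yields $n^2(a_n-a_{-n})+2\tfrac{1\mp R^2}{1\pm R^2}n(a_n+a_{-n})$. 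Adding the contribution $\mp 4\tfrac{R^2}{(1\pm R^2)^2}(a_n+a_{-n})$ on the cosine side completes the formula. The main (mild) obstacle is just sign bookkeeping in the $\pm/\mp$ notation and in the negative-frequency derivatives; there is no conceptual difficulty.
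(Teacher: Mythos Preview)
Your proposal is correct and follows essentially the same route as the paper: both compute $\operatorname{Re}[h]$, $wh'$, and $w^2h''$ in Fourier modes, substitute into the formula from Proposition~\ref{prop-linop-1}, and simplify via the identity $\tfrac{3\mp R^2}{1\pm R^2}-1=2\tfrac{1\mp R^2}{1\pm R^2}$. The only cosmetic difference is that the paper writes $w^2h''(w)$ directly in the grouped form $[n^2(a_n+a_{-n})-n(a_n-a_{-n})]\cos(n\theta)+i[n^2(a_n-a_{-n})-n(a_n+a_{-n})]\sin(n\theta)$, whereas you first write $n(n-1)a_n+n(n+1)a_{-n}$ and then regroup, but the computations are equivalent.
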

\begin{proof}
Note that if $h$ satisfies \eqref{h-expression}, we have
\begin{align*}
wh'(w)=&\sum_{n\geq 1}\left\{n(a_n-a_{-n})\cos(n\theta)+in(a_n+a_{-n})\sin(n\theta)\right\},\\
w^2h''(w)=&\sum_{n\geq 1}\left\{[n^2(a_n+a_{-n})-n(a_n-a_{-n})]\cos(n\theta)+i[n^2(a_n-a_{-n})-n(a_n+a_{-n})]\sin(n\theta)\right\}.
\end{align*}

Using the expression of $\partial_f G(R,0,0)$ in Proposition \ref{prop-linop-1} we find
\begin{align*}
\partial_f G_0(R,0,0)h(w)=&\sum_{n\geq 1}\left[\left(\mp 4\frac{R^2}{(1\pm R^2)^2}+n^2\right)(a_n+a_{-n})+\left(\frac{3\mp R^2}{1\pm R^2}-1\right)n(a_n-a_{-n})\right]\cos(n\theta)\\
&+i\left[\left(\frac{3\mp R^2}{1\pm R^2}-1\right)n(a_n+a_{-n})+n^2(a_n-a_{-n})\right]\sin(n\theta)\\
=&\sum_{n\geq 1}\left[\left(\mp 4\frac{R^2}{(1\pm R^2)^2}+n^2\right)(a_n+a_{-n})+2\frac{1\mp R^2}{1\pm R^2}n(a_n-a_{-n})\right]\cos(n\theta)\\
&+i\left[2\frac{1\mp R^2}{1\pm R^2}n(a_n+a_{-n})+n^2(a_n-a_{-n})\right]\sin(n\theta).
\end{align*}
\end{proof}

\subsection{Properties of the linearized operator}
This section aims to give the necessary properties of the linearized operator $\partial_{(\lambda,f)}G_0(R,0,0)$ in order to apply Crandall-Rabinowitz theorem. First, we will study its kernel. For that, we need this preliminary lemma.
\begin{lem}\label{lem-kernel}
The system 
\begin{equation}\label{kernel-system}
\left(
\begin{array}{cc}
\mp 4\frac{ R^2}{(1\pm R^2)^2}+n^2 & 2\frac{1\mp R^2}{1\pm R^2}n\\
2\frac{1\mp R^2}{1\pm R^2}n & n^2
\end{array}
\right)
\left(
\begin{array}{l}
a_n+a_{-n}\\ a_n-a_{-n}
\end{array}
\right)
=
\left(
\begin{array}{l}
0\\ 0
\end{array}
\right), \quad n\geq 1,
\end{equation}
satisfying the following.
\begin{itemize}
\item (Euclidean case) If $R=R_1^e:=1$, then the system \eqref{kernel-system} has a nontrivial solution given by
$$
a_n=a_{-n}=0, \, n\geq 2, \quad a_1=a_{-1}=\kappa,\, \kappa\in\R.
$$
Otherwise, the only solution to \eqref{kernel-system} is the trivial one.
\item (Hyperbolic case) If 
\begin{equation}\label{kernel-R-n-hyp-3-prop}
R=R_N^h:=\sqrt{\frac{ \left(N^2+2\right)- 2\sqrt{3}\sqrt{N^2-1}}{N^2-4}},\quad N\geq 3,
\end{equation}
then
\begin{align}\label{kernel-solution}
a_n=&a_{-n}=0,\nonumber \quad n\neq N, \\ a_N=&\frac{N(1-(R_N^h)^2)-2(1+(R_N^h)^2)}{N(1-(R_N^h)^2)+2(1+(R_N^h)^2)}a_{-N}=:\alpha_N a_{-N}, \quad a_{-N}=\kappa\in\R.
\end{align}
Otherwise, the only solution to \eqref{kernel-system} is the trivial one.
\end{itemize}
\end{lem}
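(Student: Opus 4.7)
\emph{Proof plan.} The system \eqref{kernel-system} decouples across $n$: for each $n\geq 1$ it is a $2\times 2$ linear system on the pair $(a_n+a_{-n},\,a_n-a_{-n})$. A nontrivial solution exists iff the determinant $\Delta_n(R)$ of the coefficient matrix vanishes, and when it does (and the matrix is not identically zero) the kernel is one-dimensional and can be read off from any nonzero row. My plan is therefore: (i) compute $\Delta_n(R)$ and reduce $\Delta_n(R)=0$ to a polynomial condition in $u:=R^2$; (ii) classify the pairs $(n,R)$ solving it in each geometry, subject to the admissibility $R\in\mathcal{R}$; (iii) extract the kernel vector.

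Expanding the determinant and clearing the common factor $n^2/(1\pm R^2)^2$, the vanishing condition reads $n^2(1\pm R^2)^2 \mp 4R^2 - 4(1\mp R^2)^2 = 0$, which after setting $u=R^2$ becomes the quadratic
\[
(n^2-4)u^2 + \varepsilon(2n^2+4)u + (n^2-4) = 0,
\]
with $\varepsilon=+1$ in the Euclidean case and $\varepsilon=-1$ in the Hyperbolic case. I would then split on $n$. For $n=1$ this collapses to $-3(u-\varepsilon)^2=0$, giving $R=1$ in the Euclidean case and the inadmissible $R^2=-1$ in the Hyperbolic case. For $n=2$ the leading coefficient vanishes and the equation becomes $\varepsilon\cdot 12\,u=0$, forcing $R=0$, which is excluded from $\mathcal{R}$. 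For $n\geq 3$, Vieta's formulas give product of roots equal to $1$ and sum $-\varepsilon(2n^2+4)/(n^2-4)$. In the Euclidean case ($\varepsilon=+1$) the sum is negative while the product is positive, so both roots are negative and no admissible $R>0$ exists. In the Hyperbolic case ($\varepsilon=-1$) both roots are positive and mutually reciprocal, so exactly one lies in $(0,1)$, namely the smaller root $u=\bigl((n^2+2)-2\sqrt{3(n^2-1)}\bigr)/(n^2-4)$, which for $n=N\geq 3$ matches $(R_N^h)^2$ in \eqref{kernel-R-n-hyp-3-prop}.

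Finally, at each admissible $(n,R^\star)$ the matrix has rank exactly $1$, so the kernel is spanned by a single vector that I would read from the second row $2\tfrac{1\mp R^2}{1\pm R^2}n(a_n+a_{-n})+n^2(a_n-a_{-n})=0$. In the Euclidean case with $(n,R)=(1,1)$, the off-diagonal and $(1,1)$ entries all vanish and the matrix reduces to $\operatorname{diag}(0,1)$, hence $a_1-a_{-1}=0$ and $a_1=a_{-1}=\kappa$ is the one-parameter family, as claimed. In the Hyperbolic case with $(n,R)=(N,R_N^h)$, multiplying the row equation by $(1-R^2)/n$ and collecting coefficients of $a_N$ and $a_{-N}$ yields $[2(1+R^2)+N(1-R^2)]a_N+[2(1+R^2)-N(1-R^2)]a_{-N}=0$, which gives the stated ratio $a_N=\alpha_N a_{-N}$ with $\alpha_N=\frac{N(1-R^2)-2(1+R^2)}{N(1-R^2)+2(1+R^2)}$.

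The main delicate point is the Vieta sign analysis for $n\geq 3$, which cleanly separates the two geometries: in the Euclidean case neither root is admissible (both negative), whereas in the Hyperbolic case there is a unique admissible eigenvalue $R_N^h\in(0,1)$ for every $N\geq 3$. The remaining steps are routine linear algebra.
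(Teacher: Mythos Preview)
Your proof is correct and follows essentially the same route as the paper: compute the determinant of the $2\times 2$ matrix mode by mode, reduce the vanishing condition to a quadratic in $u=R^2$, classify admissible roots in each geometry, and read off the one-dimensional kernel from a nonzero row. The only noteworthy difference is that you use Vieta's formulas (sum and product of roots) to handle the sign analysis for $n\geq 3$ in one stroke, whereas the paper writes down the explicit roots via the quadratic formula and checks their signs directly; your argument is a bit cleaner here, but the content is the same.
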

\begin{proof}
Denote now $b_n=a_n+a_{-n}$ and $c_n=a_n-a_{-n}$. Trivially, we have that 
$$a_n=\frac{b_n+c_n}{2},\quad a_{-n}=\frac{b_n-c_n}{2}.$$
In this way, \eqref{kernel-system-1} reads as
system for each mode $n\geq 1$:
\begin{equation}\label{kernel-system-2}
\left(
\begin{array}{cc}
\mp 4\frac{ R^2}{(1\pm R^2)^2}+n^2 & 2\frac{1\mp R^2}{1\pm R^2}n\\
2\frac{1\mp R^2}{1\pm R^2}n & n^2
\end{array}
\right)
\left(
\begin{array}{l}
b_n\\ c_n
\end{array}
\right)
=
\left(
\begin{array}{l}
0\\ 0
\end{array}
\right).
\end{equation}
Hence, if $b_n=c_n=0$, then $a_n=a_{-n}$ and the kernel is trivial. In order to find nontrivial solutions (note that we need that the dimension is one in order to implement the Crandall-Rabinowitz theorem) we need to solve the equation
\begin{equation*}
\left|
\begin{array}{cc}
\mp 4\frac{ R^2}{(1\pm R^2)^2}+n^2 & 2\frac{1\mp R^2}{1\pm R^2}n\\
2\frac{1\mp R^2}{1\pm R^2}n & n^2
\end{array}
\right|=0,
\end{equation*}
and that is
\begin{equation*}
\mp 4\frac{ R^2}{(1\pm R^2)^2}+n^2-4\frac{(1\mp R^2)^2}{(1\pm R^2)^2}=0,
\end{equation*}
agreeing with
\begin{equation}\label{kernel-det}
n^2=4\frac{(1\mp R^2)^2\pm R^2}{(1\pm R^2)^2}=4\frac{1+R^4\mp R^2}{(1\pm R^2)^2}.
\end{equation}
We can also solve \eqref{kernel-det} in terms of $n$. For the Euclidean case, we find two solutions given by
\begin{equation}\label{kernel-R-n-euclidean}
R^2=\frac{- \left(n^2+2\right)\pm 2\sqrt{3}\sqrt{n^2-1}}{n^2-4}.
\end{equation}
whereas for the Hyperbolic case we achieve
\begin{equation}\label{kernel-R-n-hyp}
R^2=\frac{ \left(n^2+2\right)\pm 2\sqrt{3}\sqrt{n^2-1}}{n^2-4}.
\end{equation}

Note that here we have a constrain since $R_n^2>0$. For $n=1$, we find
$$
R^2=\pm 1,
$$
where $\pm$ refers to the Euclidean and Hyperbolic cases, respectively. Then, the only valid solution is for the Euclidean case: $n=1$ and $R_1^{e}:=1$. In such a case, we find that the solutions of \eqref{kernel-system-1} are $a_1=a_{-1}=\kappa$, for $\kappa\in\R$.

In the case of $n=2$, we have a singularity in the denominator of \eqref{kernel-R-n-euclidean} and \eqref{kernel-R-n-hyp}. However, we can obtain the limit as
$$
R^2=0,
$$
in each case, and we will not consider it since $R^2>0$.

For $n\geq 3$, we do not find any positive solution of \eqref{kernel-R-n-euclidean} for the Euclidean case. However, we do find two solutions for the Hyperbolic case:
\begin{equation}\label{kernel-R-n-hyp-2}
R=\sqrt{\frac{ \left(n^2+2\right)\pm 2\sqrt{3}\sqrt{n^2-1}}{n^2-4}}.
\end{equation}
In such a case we have an extra condition that is $R<1$ and then the only solution is 
\begin{equation}\label{kernel-R-n-hyp-3}
R_n^h:=\sqrt{\frac{ \left(n^2+2\right)- 2\sqrt{3}\sqrt{n^2-1}}{n^2-4}},\quad n\geq 3.
\end{equation}
Note also that $R_n^h$ increases to $1$. By fixing $n$ and $R=R_n^h$ in \eqref{kernel-system-2} we find the solution
$$
c_n=-\frac{2}{n}\frac{1+(R_n^h)^2}{1-(R_n^h)^2} b_n, \quad b_n\in\R.
$$
%
Coming back to the original variables $a_n$ and $a_{-n}$ 
\begin{align*}
a_n=&b_n\frac{n(1-(R_n^h)^2)-2(1+(R_n^h)^2)}{2n(1-(R_n^h)^2)},\\
a_{-n}=&b_n\frac{n(1-(R_n^h)^2)+2(1+(R_n^h)^2)}{2n(1-(R_n^h)^2)}.
\end{align*}
By choosing appropriate $b_n$ we finally get
$$
a_n= \frac{n(1-(R_n^h)^2)-2(1+(R_n^h)^2)}{n(1-(R_n^h)^2)+2(1+(R_n^h)^2)}a_{-n}.
$$
%
%
\end{proof}

In the following proposition, we find the eigenvalues associated to our problem. We observe different situations depending on the euclidean or hiperbolic case. In the first one, we find a unique eigenvalue given by $R=1$ and $n=1$, which correspond to the circle filament. However, the family of eigenvalues in the second case is richer. There, we find a sequence of eigenvalues $R_n$, with $n\geq 3$, which increases to $1$ (which is the limit point).

\begin{pro}\label{prop-kernel}
The following assertions hold true.
\begin{itemize}
\item (Euclidean case) If $R=R_1^e=1$, then 
\begin{align*}
\textnormal{Ker}\left[\partial_{(\lambda,f)} G_0(R_1^e,0,0)\right]=\left\{\kappa(w+\overline{w}),\quad \kappa\in\R\right\}.
\end{align*}
Otherwise, the kernel is trivial.
\item (Hyperbolic case) If $R=R_N^h$, for fixed $N\geq 3$, then
\begin{align*}
\textnormal{Ker}\left[\partial_{(\lambda,f)} G_0(R_N^h,0,0)\right]=\left\{\kappa(\alpha_N w^N+\overline{w}^N),\quad \kappa\in\R\right\},
\end{align*}
where $R_N^h$ and $\alpha_N$ are defined in Lemma \ref{lem-kernel}. Otherwise, the kernel is trivial.
\end{itemize}
\end{pro}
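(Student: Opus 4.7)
The plan is to reduce the kernel characterization to the purely algebraic statement of Lemma \ref{lem-kernel}, by exploiting the Fourier-series diagonalization of $\partial_f G_0(R,0,0)$ given in Proposition \ref{prop-linop-2}. First I would compute $\partial_\lambda G_0(R,0,0)$ directly from \eqref{G-eq}: since only the two terms $\Omega R+\Omega f$ depend on $\lambda$ via $\Omega=\Omega_R+\lambda$, evaluation at $f=0$ gives $\partial_\lambda G_0(R,0,0)=R$. Consequently, a pair $(\mu,h)\in\R\times X$ belongs to the kernel if and only if
$$
R\mu + \partial_f G_0(R,0,0)h(w)=0,\qquad \forall w\in\T.
$$
Reading Proposition \ref{prop-linop-2}, the series $\partial_f G_0(R,0,0)h$ has no zero Fourier mode, so matching constant terms forces $R\mu=0$, and hence $\mu=0$ because $R>0$.

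It remains to characterize $\textnormal{Ker}\,\partial_f G_0(R,0,0)$ within $X$. For each $n\geq 1$ the linear independence of $\{\cos(n\theta),\sin(n\theta)\}_{n\geq 1}$ on $\T$ allows me to equate the $n$-th Fourier coefficients separately. From the formulas in Proposition \ref{prop-linop-2}, the simultaneous vanishing of the $\cos(n\theta)$ and $\sin(n\theta)$ coefficients for a given $n$ is exactly the $2\times 2$ homogeneous system \eqref{kernel-system} for the unknowns $(a_n+a_{-n},a_n-a_{-n})$. I would then apply Lemma \ref{lem-kernel} mode by mode.

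In the Euclidean case, Lemma \ref{lem-kernel} states that the only $R>0$ admitting a nontrivial solution of \eqref{kernel-system} is $R_1^e=1$, and only at $n=1$ with $a_1=a_{-1}=\kappa$; all other modes are forced to vanish. Translating back to $h$ yields $h(w)=\kappa(w+\overline{w})$, as claimed. In the Hyperbolic case, the same lemma asserts that the admissible values of $R\in(0,1)$ form the discrete sequence $\{R_N^h\}_{N\geq 3}$, and for a fixed $N$ the nontrivial relation only occurs at mode $n=N$ via $a_N=\alpha_N a_{-N}$. Setting $a_{-N}=\kappa$ recovers the basis element $\kappa(\alpha_N w^N+\overline{w}^N)$, while every other mode is forced to be zero.

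The main subtlety, rather than any real obstacle, is the bookkeeping required to guarantee that fixing a single bifurcation value $R=R_N^h$ does not accidentally enable a nontrivial solution at some other mode $n\neq N$. This is ensured by the strict monotonicity of $R_N^h$ towards $1$ established in the proof of Lemma \ref{lem-kernel}, and it is precisely what keeps the kernel one-dimensional — a prerequisite for later applying the Crandall-Rabinowitz theorem of Section \ref{sec-CR}.
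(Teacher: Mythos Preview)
Your proposal is correct and follows essentially the same route as the paper: reduce the kernel equation to $\mu=0$ plus the mode-by-mode $2\times 2$ system via Proposition~\ref{prop-linop-2}, then invoke Lemma~\ref{lem-kernel}. You are in fact more explicit than the paper (which simply writes ``we trivially find that $\mu=0$''), and your remark on the monotonicity of $R_N^h$ correctly isolates why the kernel stays one-dimensional.
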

\begin{proof}
In order to check the kernel of $\textnormal{Ker}\left[\partial_{(\lambda,f)} G(R,0,0)\right]$, we should study the following equation 
$$
\partial_\lambda G_0(R,0,0)\mu+\partial_f G_0(R,0,0)h=0.
$$
We trivially find that $\mu=0$. Then, by virtue of Proposition \ref{prop-linop-2}, it amounts to study the following system for each mode $n\geq 1$:
\begin{equation}\label{kernel-system-1}
\left(
\begin{array}{cc}
\mp 4\frac{ R^2}{(1\pm R^2)^2}+n^2 & 2\frac{1\mp R^2}{1\pm R^2}n\\
2\frac{1\mp R^2}{1\pm R^2}n & n^2
\end{array}
\right)
\left(
\begin{array}{l}
a_n+a_{-n}\\ a_n-a_{-n}
\end{array}
\right)
=
\left(
\begin{array}{l}
0\\ 0
\end{array}
\right).
\end{equation}
Note that such system was studied in Lemma \ref{lem-kernel}. With the assertions of Lemma \ref{lem-kernel} we conclude the proof.
\end{proof}

In what follows, we aim to characterize the range of the linearized operator. It will be useful for later check that the transversal condition is satisfied in our problem.

\begin{pro}\label{prop-range}
The following assertions hold true.
\begin{itemize}
\item (Euclidean case) If $R=R_1^e=1$, then 
\begin{align}\label{range-euclidean}
\textnormal{Range}\left[\partial_{(\lambda,f)} G_0(R_1^e,0,0)\right]=\left\{f\in Y, \quad f_1+f_{-1}=0\right\}.
\end{align}
Otherwise, the range is $Y$.
\item (Hyperbolic case) If $R=R_N^h$, for fixed $N\geq 3$, then
\begin{align*}
\textnormal{Range}\left[\partial_{(\lambda,f)} G_0(R_N^h,0,0)\right]=\left\{f\in Y, \quad f_{-N}=\frac{\sqrt{1+(R_m^h)^4+(R_m^h)^2}}{1-(R_m^h)^2}f_N\right\},
\end{align*}
where $R_N^h$ is defined in Lemma \ref{lem-kernel}. Otherwise, the range is $Y$.
\end{itemize}
\end{pro}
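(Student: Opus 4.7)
The plan is to exploit the Fourier diagonalization from Proposition \ref{prop-linop-2} combined with the Fredholm-of-index-zero property established in Proposition \ref{prop-fredholm}. The linearized operator
\[
\partial_{(\lambda,f)}G_0(R,0,0)(\mu,h) = R\mu + \partial_f G_0(R,0,0)h
\]
decouples across Fourier modes: the $n=0$ mode is surjected onto $\R$ through $\mu \mapsto R\mu$, while each mode $n\ge 1$ is governed by the $2\times 2$ symmetric matrix $M_n$ from Lemma \ref{lem-kernel} acting on $(a_n+a_{-n},\,a_n-a_{-n})$ and producing a pair $(A_n,B_n)$ whose components encode the target Fourier coefficients $f_n$ and $f_{-n}$ via the elementary identities $A_n=f_n+f_{-n}$ and $B_n=f_n-f_{-n}$.

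For $R$ outside the critical set, Lemma \ref{lem-kernel} gives $\det M_n\neq 0$ for every $n\ge 1$. Inverting mode by mode, and controlling $C^2$-regularity exactly as in the proof of Proposition \ref{prop-fredholm} (by splitting off the leading second-order part and treating the lower-order remainder as a convolution against an $\ell^2$-sequence), one obtains a preimage in $\R\times X$ of any target $f\in Y$, so the range equals all of $Y$. At a critical radius $R=R_1^e$ or $R=R_N^h$, Proposition \ref{prop-kernel} provides a one-dimensional kernel, and the Fredholm-zero-index property of Proposition \ref{prop-fredholm} forces the range to have codimension exactly one. All non-exceptional modes remain uniquely solvable by the previous argument, so the single obstruction comes from the mode where $M_n$ degenerates. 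Because $M_n$ is symmetric, its image is the orthogonal complement of its kernel, and the kernel vector is already computed in Lemma \ref{lem-kernel}. Imposing orthogonality of $(A_n,B_n)$ against that vector yields one scalar linear constraint; translating through $A_n=f_n+f_{-n}$, $B_n=f_n-f_{-n}$ turns it into a single linear relation between $f_N$ and $f_{-N}$ (respectively $f_1$ and $f_{-1}$).

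The main obstacle is the algebraic bookkeeping that rewrites this constraint in the stated closed form. In the Euclidean case at $R_1^e=1$, the matrix $M_1$ collapses to $\mathrm{diag}(0,1)$ and the constraint is immediate: $A_1=0$, i.e.\ $f_1+f_{-1}=0$. In the hyperbolic case one has to combine the explicit kernel coefficient $-2(1+R^2)/(N(1-R^2))$ from Lemma \ref{lem-kernel} with the defining identity $N^2(1-R^2)^2=4(1+R^2+R^4)$ satisfied at $R=R_N^h$ in order to recast the ratio $f_{-N}/f_N$ in the form involving $\sqrt{1+R^4+R^2}$. Apart from this final simplification, the entire argument is the Fredholm/Fourier duality sketched above.
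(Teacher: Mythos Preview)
Your proposal is correct and follows essentially the same route as the paper: the paper likewise invokes the Fredholm-zero-index property of Proposition~\ref{prop-fredholm} together with the kernel computation of Proposition~\ref{prop-kernel} to pin down the codimension of the range, and then reads off the single linear constraint at the exceptional mode from the $2\times2$ system of Lemma~\ref{lem-kernel}. The only cosmetic difference is that the paper, in the hyperbolic case, substitutes the identity $N^2=4(1+R^2+R^4)/(1-R^2)^2$ directly into the matrix to display its rank-one structure explicitly, whereas you phrase the same step as ``image $=$ orthogonal complement of the kernel'' using the symmetry of $M_N$; for the non-critical radii the paper simply cites Fredholm plus trivial kernel rather than redoing the mode-by-mode inversion and regularity check.
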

\begin{proof}
Since the linear operator is a Fredholm operator of zero index by Proposition \ref{prop-fredholm}, we know that the dimension of the kernel equals to the codimension of the range. Hence, as a consequence of Proposition \ref{prop-kernel} we have that the codimension of the range is $1$.  In order to check the range of $\partial_{(\lambda,f)} G_0(R,0,0)$, we should study the following equation 
$$
\partial_\lambda G_0(R,0,0)\mu+\partial_f G_0(R,0,0)h=d,
$$
for $d\in Y$. We can write $d$ in Fourier series as
$$
d(w)=\sum_{n\in\Z}d_n w^n.
$$
Then, we trivially find that $\mu=d_0$. By virtue of Proposition \ref{prop-linop-2}, we have to study the following system
\begin{equation}\label{range-system-1}
\left(
\begin{array}{cc}
\mp 4\frac{ R^2}{(1\pm R^2)^2}+n^2 & 2\frac{1\mp R^2}{1\pm R^2}n\\
2\frac{1\mp R^2}{1\pm R^2}n & n^2
\end{array}
\right)
\left(
\begin{array}{l}
a_n+a_{-n}\\ a_n-a_{-n}
\end{array}
\right)
=
\left(
\begin{array}{l}
d_n+d_{-n}\\ d_n-d_{-n}
\end{array}
\right).
\end{equation}
Note that such matrix has been studied in Lemma \ref{lem-kernel}. 

In the Euclidean case, let us fix $R=R^e_1=1$. Then, by Lemma \ref{lem-kernel} we have that \eqref{range-system-1} has a solution for $n\neq 1$. However, for $n=1$ the system \eqref{range-system-1} has not always a solution. Note that this case correspond to 
\begin{equation*}
\left(
\begin{array}{cc}
0 & 0\\
0 & 1
\end{array}
\right)
\left(
\begin{array}{l}
a_1+a_{-1}\\ a_1-a_{-1}
\end{array}
\right)
=
\left(
\begin{array}{l}
d_1+d_{-1}\\ d_1-d_{-1}
\end{array}
\right),
\end{equation*}
which has a solution only if $d_1+d_{-1}=0$. That concludes the proof of \eqref{range-euclidean}.

Let us now work with the Hyperbolic case. Fix $N\geq 3$ and $R=R_N^h$ defined in Lemma \ref{lem-kernel}. By the equivalence between \eqref{kernel-det} and \eqref{kernel-R-n-hyp}, we have that $R=R_N^h$ agrees with
$$
N^2=4\frac{1+R^4+R^2}{(1-R^2)^2}.
$$
In such a case, the system \eqref{range-system-1} amounts to
\begin{equation}\label{range-system-hyp}
\left(
\begin{array}{cc}
4\frac{ (1+R^2)^2}{(1- R^2)^2} & 4\frac{(1+ R^2)\sqrt{1+R^4+R^2})}{(1- R^2)^2}\\
4\frac{(1+ R^2)\sqrt{1+R^4+R^2}}{(1- R^2)^2} & 4\frac{1+R^4+R^2}{(1-R^2)^2}
\end{array}
\right)
\left(
\begin{array}{l}
a_n+a_{-n}\\ a_n-a_{-n}
\end{array}
\right)
=
\left(
\begin{array}{l}
d_n+d_{-n}\\ d_n-d_{-n}
\end{array}
\right).
\end{equation}
If $n\neq N$, then \eqref{range-system-hyp} has a unique solution, for any $d_n, d_{-n}\in\R$. In the case $n=N$, the system has a solution only if
$$
d_{-N}=\frac{\sqrt{1+R^4+R^2}}{1-R^2} d_N.
$$

\end{proof}

\subsection{Transversal condition}
Finally, we check the transversal condition of the Crandall-Rabinowitz theorem.
\begin{pro}\label{prop-transversal}
The transversal condition is satisfied, that is,
$$
\partial_R \partial_{(\lambda,f)} G_0(R^*,0,0)(\lambda,h)^*\notin \textnormal{Range}\left[\partial_{(\lambda,f)}G_0(R^*,0,0)\right],
$$
where $R^*$ equals to $R_1^e$ for the Euclidean case, and $R^*$ equals to $R_N^h$ in the Hyperbolic case. Here, $h^*$ is an element of $\textnormal{Ker}\left[\partial_{(\lambda,f)}G_0(R^*,0,0)\right]$.
\end{pro}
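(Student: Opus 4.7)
The plan is to verify the transversal condition by a direct computation of $\partial_R\partial_{(\lambda,f)} G_0(R^\star,0,0)(\lambda^\star,h^\star)$ and comparison against the range characterization of Proposition \ref{prop-range}. Since $\partial_\lambda G_0(R,0,0)\mu = R\mu$ (read off the $\Omega R$ term of \eqref{G-eq}), differentiating the formula of Proposition \ref{prop-linop-1} in $R$ yields
\[
\partial_R\partial_{(\lambda,f)} G_0(R,0,0)(\mu,h) = \mu + A'(R)\,\textnormal{Re}[h(w)] + B'(R)\,wh'(w),
\]
with $A(R) = \mp 4R^2/(1\pm R^2)^2$ and $B(R) = (3\mp R^2)/(1\pm R^2)$. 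By Proposition \ref{prop-kernel} the kernel element has $\mu^\star = 0$, and $h^\star$ is supported on the Fourier modes $w^{\pm N}$ (with $N=1$ in the Euclidean case); hence the image is supported on the same two Fourier modes and, by Proposition \ref{prop-range}, testing membership in the range reduces to verifying a single linear relation between the Fourier coefficients $d_N$ and $d_{-N}$ of the output.

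For the Euclidean case at $R^\star = R_1^e = 1$, a short computation gives $A'(1) = 0$ and $B'(1) = -2$, so applying the operator to $h^\star = w + \bar w$ produces $-2\,w(w+\bar w)' = -2(w-\bar w)$; one then tests the resulting pair $(d_1, d_{-1})=(-2,2)$ against the range condition $d_1 + d_{-1} = 0$.

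For the hyperbolic case at $R^\star = R_N^h$, writing $\rho = R_N^h$ one finds
\[
A'(R_N^h) = \frac{8\rho(1+\rho^2)}{(1-\rho^2)^3}, \qquad B'(R_N^h) = \frac{8\rho}{(1-\rho^2)^2},
\]
and applying the operator to $h^\star = \alpha_N w^N + \bar w^N$ yields closed-form expressions for both $d_N$ and $d_{-N}$. Using the defining identity $N^2(1-\rho^2)^2 = 4(1+\rho^4+\rho^2)$ of $R_N^h$ (equivalently, $\sqrt{1+\rho^4+\rho^2} = N(1-\rho^2)/2$), the range condition of Proposition \ref{prop-range} simplifies to $d_{-N} = (N/2)\,d_N$. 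Substituting $\alpha_N = (Ns-2t)/(Ns+2t)$ with $s = 1-\rho^2$, $t = 1+\rho^2$ and clearing the common denominator $Ns+2t$, the discrepancy $d_{-N} - (N/2)\,d_N$ reduces to a nonzero multiple of $Ns(N+2) + t(2-N)$, whose positivity for every admissible $N\geq 3$ follows from $s,t>0$ together with the defining equation of $R_N^h$.

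The main obstacle is the hyperbolic algebraic manipulation, since both Fourier modes are simultaneously active and the kernel coefficient $\alpha_N$ mixes them nontrivially; the key simplification is to exploit the defining equation of $R_N^h$ to eliminate $\sqrt{1+\rho^4+\rho^2}$ before clearing denominators, so that the transversality reduces to a transparent positivity statement in $N$ and $\rho$.
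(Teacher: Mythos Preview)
Your approach is exactly the paper's: differentiate the linearization in $R$, apply to the kernel element, and test the result against the range description of Proposition~\ref{prop-range}. There is, however, a genuine gap in the Euclidean case.

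Your computation at $R^\star=R_1^e=1$ is correct: $A'(1)=0$, $B'(1)=-2$, and applying the operator to $h^\star=w+\bar w$ gives $-2(w-\bar w)$. But then $(d_1,d_{-1})=(-2,2)$ \emph{satisfies} the range condition $d_1+d_{-1}=0$, so the element you produced lies \emph{in} the range and the transversal condition, as you have set it up, fails. You state the test without carrying out the conclusion; doing so yields the opposite of what you are trying to prove. (The paper's own evaluation at this point contains a sign slip---it writes $1+(R^\star)^3$ in the cosine coefficient where the general formula has $1-(R^\star)^2$---and thereby obtains the nonzero answer $-4a_1 w$. A correct evaluation of the paper's general formula in Proposition~\ref{prop-transversal} gives the same $-2a_1(w-\bar w)$ that you found.) So the Euclidean case needs a different argument, or a re-examination of the setup; as written, your proof does not establish transversality there.

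For the hyperbolic case your reduction is cleaner than the paper's (you differentiate the closed form of Proposition~\ref{prop-linop-1} rather than the Fourier expression of Proposition~\ref{prop-linop-2}), and eliminating $\sqrt{1+\rho^4+\rho^2}$ via $N(1-\rho^2)/2$ before clearing denominators is a good move. One caution: you take the range condition $d_{-N}=\tfrac{N}{2}d_N$ directly from Proposition~\ref{prop-range}, but you should re-derive it. For the symmetric rank-one $2\times2$ system at $R=R_N^h$, the solvability condition is that $(d_N+d_{-N},\,d_N-d_{-N})$ lie in the column space, which gives
\[
(d_N+d_{-N})\,N(1-\rho^2)=2(1+\rho^2)(d_N-d_{-N}),
\]
i.e.\ $d_{-N}=-\alpha_N d_N$, not $d_{-N}=\tfrac{N}{2}d_N$. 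With the corrected condition your final positivity check in $N$ and $\rho$ changes accordingly; you should redo that step.
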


\begin{proof}
From Proposition \ref{prop-linop-2} we have
\begin{align*}
\partial_R \partial_f G_0(R,0,0)h(w)=&\sum_{n\geq 1}\left[\mp 8R\frac{1\mp R^3}{(1\pm R^2)^3} (a_n+a_{-n})\mp 8 \frac{R}{(1\pm R^2)^2} n(a_n-a_{-n})\right]\cos(n\theta)\\
&+i\left[\mp 8 \frac{R}{(1\pm R^2)^2}n(a_n+a_{-n})\right]\sin(n\theta).
\end{align*}
Let us first work with the Euclidean case. Then $R^*=R_1^e=1$, and the element of the kernel satisfies $a_n=a_{-n}$ by Proposition \ref{prop-kernel}. Then:
\begin{align*}
\partial_R \partial_{(\lambda,f)} G_0(R^*,0,0)h^*(w)=&\left[- 16 R^*\frac{1+ (R^*)^3}{(1+ (R^*)^2)^3} a_1\right]\cos(\theta)+i\left[- 16  \frac{R^*}{(1+ (R^*)^2)^2} a_1\right]\sin(\theta)\\
=&\left[- 16 \frac{1}{4} a_1\right]\cos(\theta)+i\left[- 16  \frac{1}{4}1 a_n\right]\sin(\theta)\\
=&- 16 \frac{1}{4} a_1 w.
\end{align*}
By the characterization of $\textnormal{Range}\left[\partial_{(\lambda,f)}G_0(R^*,0,0)\right]$ in Proposition \ref{prop-range}, we get that $$w\notin \textnormal{Range}\left[\partial_{(\lambda,f)}G_0(R^*,0,0)\right],$$
and hence the transversal condition is satisfied in this case.

Finally, let us work with the Hyperbolic case. Set $N\geq 3$ and $R^*=R_N^h$, where $R_N^h$ is defined in Lemma \ref{lem-kernel}. Then, using that a element of the kernel verifies $a_{-N}=1$ and $a_N=\alpha_N$ by Proposition \ref{prop-kernel}, we get
\begin{align*}
\partial_R \partial_{(\lambda,f)} G_0(R^\star,0,0)h^\star(w)=&\left[ 8R^\star\frac{1+ (R^\star)^3}{(1- (R^\star)^2)^3} (\alpha_N+1)+ 8 \frac{R^\star}{(1- (R^\star)^2)^2} N(\alpha_N-1)\right]\cos(N\theta)\\
&+i\left[ 8 \frac{R^\star}{(1- (R^\star)^2)^2}N(\alpha_N+1)\right]\sin(N\theta)\\
=:& c_N\cos(N\theta)+i d_N\sin(N\theta).
\end{align*}
Define $g_N$ and $g_{-N}$ as
$$
g_N+g_{-N}=c_N, \quad g_N-g_{-N}=d_N,
$$
which agrees with
$$
g_N=\frac{c_N+d_N}{2},\quad g_{-N}=\frac{c_N-d_N}{2}.
$$
Then, by Proposition \ref{prop-range}, we have that $d_N\cos(N\theta)+i c_N\sin(N\theta)\notin \textnormal{Range}\left[\partial_{(\lambda,f)}G_0(R^*,0,0)\right]$ if 
$$
g_{-N}\neq \frac{\sqrt{1+(R^\star)^4+(R^2)^\star}}{1-(R^\star)^2}g_N,
$$
agreeing with 
$$
(c_N-d_N)\neq \frac{\sqrt{1+(R^\star)^4+(R^2)^\star}}{1-(R^\star)^2}(c_n+d_N).
$$
Using the definition of $c_N$ and $d_N$, such condition amounts to
\begin{align*}
&8R^\star\frac{1+ (R^\star)^3}{(1- (R^\star)^2)^3} (\alpha_N+1)- 16 \frac{R^\star}{(1- (R^\star)^2)^2} N\\
\neq& \frac{\sqrt{1+(R^\star)^4+(R^2)^\star}}{1-(R^\star)^2}\left[ 8R^\star\frac{1+ (R^\star)^3}{(1- (R^\star)^2)^3} (\alpha_N+1)+ 16 \frac{R^\star}{(1- (R^\star)^2)^2} N\alpha_N\right],
\end{align*}
which is satisfied for any $N\geq 3$. Using the definition of $\alpha_N$ in Lemma \ref{lem-kernel} we get that the previous condition amounts to
\begin{align}\label{transv-1}
\nonumber& \frac{(1+ (R^\star)^3)}{N(1-(R^\star)^2)+2(1+(R^\star)^2)}-  1 \\
\neq& \frac{\sqrt{1+(R^\star)^4+(R^2)^\star}}{1-(R^\star)^2}\left[  \frac{(1+ (R^\star)^3)}{N(1-(R^\star)^2)+2(1+(R^\star)^2)}\right.\\
&\left.+   \frac{N(1-(R^\star)^2)-2(1+(R^\star)^2)}{N(1-(R^\star)^2)+2(1+(R^\star)^2)}\right].
\end{align}
We can prove that the left hand side is negative and let us check that the right hand side is positive obtaining then that \eqref{transv-1} is satisfied. The r.h.s. is positive if and only if
$$
1+(R^\star)^3+(N-2)-(N+2)(R^\star)^2>0.
$$
Note that 
$$
1+(R^\star)^3+(N-2)-(N+2)(R^\star)^2>1+(N-2)-(N+2)(R^\star)^2,
$$
which is positive if
$$
\frac{N-1}{N+2}>(R^\star)^2=\frac{N^2+2-2\sqrt{3}\sqrt{N^2-1}}{N^2-4},
$$
for $N\geq 3$. That can be written as
$$
(N-1)(N-2)>N^2+2-2\sqrt{3}\sqrt{N^2-1},
$$
which is true for any $N\geq 3$.
\end{proof}

\section{Spectral study for rotation and slipping motion} \label{sec-spectral-a}
This section deals with the spectral study of the linearized operator of $G_a$ at the trivial solution when $a\neq 0$. Here, the family of eigenvalues is richer, specially in the Euclidean case.

\begin{pro}\label{prop-linop-1-a}
The linearized operator of $G_a$ around $(R,0,0)$ reads as 
\begin{align*}
\partial_f G_a(R,0,0)h(w)=\mp 4\frac{R^2}{(1\pm R^2)^2}\textnormal{Re}\left[h(w)\right]+\frac{3\mp R^2}{1\pm R^2} wh'(w)-awh'(w)+w^2h''(w).
\end{align*}
\end{pro}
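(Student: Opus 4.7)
The plan is to imitate the computation already done in Proposition \ref{prop-linop-1} for the case $a=0$, keeping careful track of the extra $-a$ appearing in the coefficients of the linear-in-$f$ part of $G_a$ as defined in \eqref{G-eq}. The Fréchet derivative $\partial_f G_a(R,0,0)h$ is obtained by replacing $f$ by $th$, differentiating in $t$, and evaluating at $t=0$. Since $\Omega$ does not depend on $f$ (when $\lambda$ is fixed at $0$), the linear-in-$f$ portion of $G_a$ produces directly
\[
(\Omega_R+1-a)\,h(w)+(3-a)\,w h'(w)+w^2 h''(w),
\]
where $\Omega_R = a + \frac{-1\pm R^2}{1\pm R^2}$.

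For the rational term
\[
\mp 2\,\frac{N(f)}{D(f)}, \qquad N(f)=(R+\overline{f})(R+f+wf')^2, \quad D(f)=1\pm\bigl(R^2+Rf+R\overline{f}+|f|^2\bigr),
\]
I would apply the quotient rule. At $f=0$, $N(0)=R^3$ and $D(0)=1\pm R^2$, and a direct computation gives
\[
dN(0)[h]=R^2\bigl(\overline{h}+2h+2w h'\bigr), \qquad dD(0)[h]=\pm 2R\,\textnormal{Re}[h].
\]
Hence
\[
\mp 2\,\partial_f\!\left.\tfrac{N}{D}\right|_{f=0}[h] \;=\; \mp\frac{2R^2}{1\pm R^2}\bigl(\overline{h}+2h+2wh'\bigr)+\frac{4R^4}{(1\pm R^2)^2}\textnormal{Re}[h].
\]

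Finally, I would collect terms. Using $\Omega_R+1-a=\pm\frac{2R^2}{1\pm R^2}$, the coefficient of $h$ combined with the coefficient of $\overline{h}$ yields $\mp\frac{4R^2}{1\pm R^2}\textnormal{Re}[h]$, which together with $\frac{4R^4}{(1\pm R^2)^2}\textnormal{Re}[h]$ collapses (after putting over the common denominator $(1\pm R^2)^2$) to $\mp\frac{4R^2}{(1\pm R^2)^2}\textnormal{Re}[h]$. The coefficient of $w h'(w)$ becomes
\[
(3-a)\mp\frac{4R^2}{1\pm R^2}=\frac{3\mp R^2}{1\pm R^2}-a,
\]
which produces the terms $\frac{3\mp R^2}{1\pm R^2}wh'(w)-a\,w h'(w)$. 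The $w^2 h''(w)$ term is unchanged. This reproduces the stated formula.

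No serious obstacle is expected; the only delicate point is bookkeeping of the $\pm/\mp$ signs and using the identity $\Omega_R+1-a=\pm\frac{2R^2}{1\pm R^2}$ to cancel one of the $\mathrm{Re}[h]$ contributions against the nonlinear part. Observe that the outcome differs from Proposition \ref{prop-linop-1} only by the extra summand $-a\,wh'(w)$, as expected since $a$ enters $G_a$ only through the linear coefficients multiplying $R$, $f$ and $w f'$.
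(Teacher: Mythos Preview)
Your proposal is correct and follows essentially the same route as the paper: you differentiate the linear part of \eqref{G-eq} directly, apply the quotient rule to the rational term, substitute $\Omega_R+1-a=\pm\frac{2R^2}{1\pm R^2}$, and collect. The only cosmetic difference is that you package the derivative of the nonlinear term via $dN(0)$, $dD(0)$, whereas the paper writes out the three resulting summands $\mp\frac{2R^2}{1\pm R^2}\overline{h}$, $\mp\frac{4R^2}{1\pm R^2}(h+wh')$, $\frac{2R^4}{(1\pm R^2)^2}(h+\overline{h})$ immediately.
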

\begin{proof}
From the expression of $G$ in \eqref{G-eq}, we find
\begin{align*}
\partial_f G_a(R,0,0)h(w)=&(\Omega_R+1-a) h(w)+(3-a)wh'(w)+w^2h''(w)\\
&\mp 2 \frac{R^2  }{1\pm R^2}\overline{h(w)}\mp 4\frac{R^2}{1\pm R^2}(h(w)+wh'(w))\\
&+ 2\frac{R^4}{(1\pm R^2)^2}(h(w)+\overline{h(w)}).
\end{align*}
Using the expression of $\Omega_R$, we find
\begin{align*}
\partial_f G_a(R,0,0)h(w)=& \pm\frac{2R^2}{1\pm R^2} h(w)+(3-a)wh'(w)+w^2h''(w)\\
&\mp 2 \frac{R^2  }{1\pm R^2}\overline{h(w)}\mp 4\frac{R^2}{1\pm R^2}(h(w)+wh'(w))\\
&+ 2\frac{R^4}{(1\pm R^2)^2}(h(w)+\overline{h(w)})\\
=&\mp 4\frac{R^2}{1\pm R^2}\textnormal{Re}\left[h(w)\right]+4\frac{R^4}{(1\pm R^2)^2}\textnormal{Re}\left[h(w)\right]\\
&+\left(3\mp 4\frac{R^2}{1\pm R^2}\right)wh'(w)-awh'(w)+w^2h''(w)\\
=&\mp 4\frac{R^2}{(1\pm R^2)^2}\textnormal{Re}\left[h(w)\right]+\frac{3\mp R^2}{1\pm R^2} wh'(w)-awh'(w)+w^2h''(w).
\end{align*}
\end{proof}

In the next proposition, we show the fredholmness of the linearized operator at the equilibrium. We omit the proof due to its similarly with Proposition \ref{prop-fredholm}.
\begin{pro}\label{prop-fredholm-a}
The linearized operator $\partial_\lambda G_a(R,0,0)\mu+\partial_f G_a(R,0,0)h$ is Fredholm of zero index.
\end{pro}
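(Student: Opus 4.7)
The plan is to follow verbatim the compact-perturbation strategy used for Proposition \ref{prop-fredholm}, with the only new ingredient being the extra first-order term $-a\,wh'(w)$ appearing in Proposition \ref{prop-linop-1-a}. Since $\partial_\lambda G_a(R,0,0)\mu = R\mu$ (by inspection of \eqref{G-eq}, because the only $\lambda$-dependence at $f=0$ enters through the term $\Omega R$), I split
\begin{align*}
\partial_\lambda G_a(R,0,0)\mu + \partial_f G_a(R,0,0)h = \mathcal{L}(\mu,h) + \mathcal{K} h,
\end{align*}
where
\begin{align*}
\mathcal{L}(\mu,h)(w) &= R\mu + w^2 h''(w) + 3\, w h'(w),\\
\mathcal{K} h(w) &= \mp 4\frac{R^2}{(1\pm R^2)^2}\,\text{Re}[h(w)] + \left(\frac{3\mp R^2}{1\pm R^2} - a - 3\right) w h'(w).
\end{align*}
The principal symbol $\mathcal{L}$ is identical to the one appearing in the proof of Proposition \ref{prop-fredholm}; the term $-awh'(w)$ is absorbed harmlessly into the zeroth/first-order remainder $\mathcal{K}$.

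Next I would check that $\mathcal{K}: X \to Y$ is compact. The term involving $\text{Re}[h]$ lands in $C^2$, and the $wh'(w)$ contribution lands in $C^1$. Since the embedding $C^1(\T)\hookrightarrow C^0(\T)$ is compact by Arzelà--Ascoli, $\mathcal{K}$ is a compact operator into $Y$. The symmetry condition $f_n\in\R$ is preserved because the multiplicative constants $R,a$ are real; thus $\mathcal{K}h\in Y$.

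For $\mathcal{L}: \R\times X\to Y$, the exact same Fourier computation as in Proposition \ref{prop-fredholm} applies without change: expanding $h(w)=\sum_{n\neq 0} h_n w^n$ and $d(w)=\sum_n d_n w^n$, the equation $\mathcal{L}(\mu,h)=d$ reduces to $\mu = d_0/R$ and $h_n = d_n/(n(n+2))$ for $n\neq 0$. The resulting $h$ lies in $C^2$ by the convolution argument already given (writing $h''$ as $d$ minus three times the convolution of $d$ with $\sum_{n\neq 0} w^n/(n+2)\in L^2\subset L^1$). Hence $\mathcal{L}$ is a bijection $\R\times X\to Y$, continuous by construction, and therefore an isomorphism by the open mapping theorem; in particular it is Fredholm of index zero.

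Finally, since Fredholmness and index are stable under compact perturbations, the sum $\mathcal{L}+\mathcal{K}$ remains Fredholm of zero index, proving the proposition. There is no genuine obstacle: the only minor point to verify is that the extra $-awh'$ term does not disturb the compactness of $\mathcal{K}$, which is immediate because differentiation sends $C^2$ to $C^1\subset\subset C^0$.
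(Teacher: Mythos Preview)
Your proposal is correct and follows exactly the approach the paper intends: the paper omits the proof by pointing to Proposition~\ref{prop-fredholm}, and you carry out that same compact-perturbation argument verbatim, absorbing the new first-order term $-a\,wh'(w)$ into the remainder $\mathcal{K}$ (which stays $C^1$-valued and hence compact into $C^0$). The principal part $\mathcal{L}$ and its Fourier inversion are unchanged from the $a=0$ case, so nothing further is needed.
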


Finally, let us give the expression of the linear operator in Fourier series.
\begin{pro}\label{prop-linop-2-a}
If $h$ takes the form
\begin{equation}\label{h-expression-a}
h(w)=\sum_{n\in\Z, n\neq 0}a_n w^n=\sum_{n\geq 1}\left\{a_n w^n+a_{-n}\overline{w}^n\right\},
\end{equation}
then
\begin{align*}
\partial_f G_a(R,0,0)h(w)=&\sum_{n\geq 1}\left[\left(\mp 4\frac{R^2}{(1\pm R^2)^2}+n^2\right)(a_n+a_{-n})+\left(2\frac{1\mp R^2}{1\pm R^2}-a\right)n(a_n-a_{-n})\right]\cos(n\theta)\\
&+i\left[\left(2\frac{1\mp R^2}{1\pm R^2}-a\right)n(a_n+a_{-n})+n^2(a_n-a_{-n})\right]\sin(n\theta).
\end{align*}
\end{pro}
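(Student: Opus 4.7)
The plan is to mirror the proof of Proposition \ref{prop-linop-2} almost verbatim, since the only structural difference between $\partial_f G_0(R,0,0)$ (from Proposition \ref{prop-linop-1}) and $\partial_f G_a(R,0,0)$ (from Proposition \ref{prop-linop-1-a}) is the additional term $-a\,wh'(w)$. Starting from the pointwise formula of Proposition \ref{prop-linop-1-a}, I rewrite
\begin{equation*}
\partial_f G_a(R,0,0)h(w) = \mp 4\frac{R^2}{(1\pm R^2)^2}\,\textnormal{Re}[h(w)] + \left(\frac{3\mp R^2}{1\pm R^2} - a\right) wh'(w) + w^2 h''(w),
\end{equation*}
so that the task reduces to expanding $\textnormal{Re}[h]$, $wh'$ and $w^2h''$ in the cosine/sine basis and linearly combining.

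Next, given the decomposition $h(w) = \sum_{n\ge1}\{a_n w^n + a_{-n}\overline{w}^n\}$, termwise differentiation yields
\begin{equation*}
wh'(w) = \sum_{n\ge1}\bigl\{n a_n w^n - n a_{-n}\overline{w}^n\bigr\}, \qquad w^2 h''(w) = \sum_{n\ge1}\bigl\{n(n-1) a_n w^n + n(n+1) a_{-n}\overline{w}^n\bigr\}.
\end{equation*}
Substituting $w^n = \cos(n\theta) + i\sin(n\theta)$ and $\overline{w}^n = \cos(n\theta) - i\sin(n\theta)$ gives, exactly as in the proof of Proposition \ref{prop-linop-2},
\begin{align*}
wh'(w) &= \sum_{n\ge1}\bigl\{n(a_n - a_{-n})\cos(n\theta) + in(a_n + a_{-n})\sin(n\theta)\bigr\},\\
w^2 h''(w) &= \sum_{n\ge1}\bigl\{[n^2(a_n+a_{-n}) - n(a_n-a_{-n})]\cos(n\theta) + i[n^2(a_n-a_{-n}) - n(a_n+a_{-n})]\sin(n\theta)\bigr\},\\
\textnormal{Re}[h(w)] &= \sum_{n\ge1}(a_n+a_{-n})\cos(n\theta).
\end{align*}

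Finally, I assemble the three pieces using the identity $\frac{3\mp R^2}{1\pm R^2} - 1 = 2\frac{1\mp R^2}{1\pm R^2}$, which was already exploited in the proof of Proposition \ref{prop-linop-2}. This absorbs the $-n(a_n-a_{-n})$ and $-n(a_n+a_{-n})$ terms arising from $w^2h''$ into the coefficient of $wh'$, producing the prefactor $2\frac{1\mp R^2}{1\pm R^2} - a$ in place of $2\frac{1\mp R^2}{1\pm R^2}$: the $-a$ is precisely the new contribution from the extra $-a\,wh'(w)$ piece. Collecting the $\cos(n\theta)$ and $i\sin(n\theta)$ terms yields the stated formula.

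There is no genuine obstacle here; the only care needed is the bookkeeping of signs when splitting the sum $\sum_{n\in\Z,n\neq0}$ into positive and negative indices (in particular getting $n(n+1)$, not $n(n-1)$, for the $\overline{w}^n$ coefficient of $w^2 h''$, which is what allows the $-n$ correction in $w^2h''$ to combine cleanly with $wh'$). Everything else is a mechanical rerun of the computation already performed for $a=0$.
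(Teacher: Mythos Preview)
Your proposal is correct and follows essentially the same approach as the paper: both proofs expand $wh'(w)$ and $w^2h''(w)$ into the $\cos(n\theta)$/$\sin(n\theta)$ basis exactly as in Proposition~\ref{prop-linop-2}, then invoke Proposition~\ref{prop-linop-1-a} and simplify $\frac{3\mp R^2}{1\pm R^2}-1-a$ to $2\frac{1\mp R^2}{1\pm R^2}-a$. Your explicit display of the intermediate expansions of $wh'$ and $w^2h''$ in the $w^n$, $\overline{w}^n$ basis and the remark about the $n(n+1)$ versus $n(n-1)$ bookkeeping are helpful additions, but the argument is otherwise identical to the paper's.
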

\begin{proof}
Note that if $h$ satisfies \eqref{h-expression}, we have
\begin{align*}
wh'(w)=&\sum_{n\geq 1}\left\{n(a_n-a_{-n})\cos(n\theta)+in(a_n+a_{-n})\sin(n\theta)\right\},\\
w^2h''(w)=&\sum_{n\geq 1}\left\{[n^2(a_n+a_{-n})-n(a_n-a_{-n})]\cos(n\theta)+i[n^2(a_n-a_{-n})-n(a_n+a_{-n})]\sin(n\theta)\right\}.
\end{align*}

Using the expression of $\partial_f G(R,0,0)$ in Proposition \ref{prop-linop-1-a} we find
\begin{align*}
\partial_f G_a(R,0,0)h(w)=&\sum_{n\geq 1}\left[\left(\mp 4\frac{R^2}{(1\pm R^2)^2}+n^2\right)(a_n+a_{-n})+\left(\frac{3\mp R^2}{1\pm R^2}-1-a\right)n(a_n-a_{-n})\right]\cos(n\theta)\\
&+i\left[\left(\frac{3\mp R^2}{1\pm R^2}-1-a\right)n(a_n+a_{-n})+n^2(a_n-a_{-n})\right]\sin(n\theta)\\
=&\sum_{n\geq 1}\left[\left(\mp 4\frac{R^2}{(1\pm R^2)^2}+n^2\right)(a_n+a_{-n})+\left(2\frac{1\mp R^2}{1\pm R^2}-a\right)n(a_n-a_{-n})\right]\cos(n\theta)\\
&+i\left[\left(2\frac{1\mp R^2}{1\pm R^2}-a\right)n(a_n+a_{-n})+n^2(a_n-a_{-n})\right]\sin(n\theta).
\end{align*}
\end{proof}

\subsection{Properties of the linearized operator}
We shall start by studying the kernel of $\partial_{(\lambda,f)}G_a(R,0,0)$, for $a\neq 0$. As for the case $a=0$, we need this preliminary lemma. Define
\begin{equation}\label{beta}
\beta(n,R):=\frac{n-2\frac{1\mp R^2}{1\pm R^2}+a}{n+2\frac{1\mp R^2}{1\pm R^2}-a}.
\end{equation}
\begin{lem}\label{lem-kernel-a}
The system 
\begin{equation}\label{kernel-system-a}
\left(
\begin{array}{cc}
\mp 4\frac{ R^2}{(1\pm R^2)^2}+n^2 & \left(2\frac{1\mp R^2}{1\pm R^2}-a\right)n\\
\left(2\frac{1\mp R^2}{1\pm R^2}-a\right)n & n^2
\end{array}
\right)
\left(
\begin{array}{l}
a_n+a_{-n}\\ a_n-a_{-n}
\end{array}
\right)
=
\left(
\begin{array}{l}
0\\ 0
\end{array}
\right), \quad n\geq 1,
\end{equation}
satisfying the following.
\begin{itemize}
\item (Euclidean case) For $N\geq 1\Bbb Z$ such that $R_N^{e,-}>0$, where
$$
R_N^{e,-}:=\sqrt{\frac{- \left(N^2+2-a^2\right)- 2\sqrt{3N^2-3+a^2}}{N^2-4-4a-a^2}},
$$
then the solution for $R=R_N^{e,-}$ is given by
$$
a_n=a_{-n}=0,\, n\neq N, \quad a_N=\beta(N, R_N^{e,-})a_{-N},\,\,  a_{-N}\in\R.
$$
Moreover, for $a<1$, $N=1$ and $R=R_1^{e,+}$, with
$$
R_1^{e,+}=\sqrt{\frac{-3+a^2+ 2a}{-3-4a-a^2}},
$$
we have a nontrivial solution given by
$$
a_n=a_{-n}=0,\, n\neq 1, \quad a_1=\beta(1,R^{e,+}_1)a_{-1},\,\,  a_{-1}\in\R.
$$
Otherwise, the only solution to \eqref{kernel-system-a} is the trivial one.
\item (Hyperbolic case) If $R=R_{N}^{h,+}$, with 
$$
R_n^{h,+}:=\sqrt{\frac{ \left(n^2+2-a^2\right)+ 2\sqrt{3n^2-3+a^2}}{n^2-4-4a-a^2}},
$$
and $N$ such that $R_{N}^{h,+}>0$, then we have a nontrivial solution given by
$$
a_n=a_{-n}=0,\, n\neq N, \quad a_N=\beta(N, R_N^{h,+})a_{-N},\,\,  a_{-N}\in\R.
$$
Moreover, if $R=R_{N}^{h,-}$ with
$$
R_n^{h,-}:=\sqrt{\frac{ \left(n^2+2-a^2\right)- 2\sqrt{3n^2-3+a^2}}{n^2-4-4a-a^2}},
$$ 
and for $N$ such that $R_{N}^{h,-}>0$, then we have a nontrivial trivial solution given by
$$
a_n=a_{-n}=0,\, n\neq N, \quad a_N=\beta(N, R_N^{h,-})a_{-N},\,\,  a_{-N}\in\R.
$$
Otherwise, the only solution to \eqref{kernel-system-a} is the trivial one.
\end{itemize}
\end{lem}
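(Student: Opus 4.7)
The plan is to mimic the strategy of Lemma \ref{lem-kernel}, since the only structural change in passing from $a=0$ to $a\neq 0$ is that the off-diagonal entry $2\frac{1\mp R^2}{1\pm R^2}$ becomes $2\frac{1\mp R^2}{1\pm R^2}-a$. I would first observe that a nontrivial solution of \eqref{kernel-system-a} exists if and only if the determinant of the $2\times 2$ matrix vanishes, which after dividing by $n^2$ reads
\begin{equation*}
\mp 4\frac{R^2}{(1\pm R^2)^2}+n^2 = \left(2\frac{1\mp R^2}{1\pm R^2}-a\right)^2.
\end{equation*}
The natural change of variables $u=\frac{1\mp R^2}{1\pm R^2}$ turns the identities $\frac{R^2}{(1\pm R^2)^2}=\pm\frac{1-u^2}{4}$ (Euclidean) and $\pm\frac{u^2-1}{4}$ (hyperbolic) into a single quadratic
\begin{equation*}
3u^2-4au+(1+a^2-n^2)=0, \qquad u=\frac{2a\pm\sqrt{3n^2+a^2-3}}{3}.
\end{equation*}

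Next I would invert back via $R^2=(1-u)/(1+u)$ in the Euclidean case and $R^2=(u-1)/(u+1)$ in the hyperbolic case, rationalizing by multiplying numerator and denominator by the conjugate factor in order to recognize the announced expressions $R_N^{e,\pm}$ and $R_N^{h,\pm}$. The two roots of the quadratic in $u$ correspond precisely to the two signs in front of the square root in the formulas for $R_N^{e,\pm}$ and $R_N^{h,\pm}$. With this correspondence established, the hypothesis $R_N^{e,-}>0$ (resp. $R_N^{h,\pm}>0$) is exactly what is needed to declare the associated value of $R$ an eigenvalue. The hyperbolic constraint $R<1$ is also to be verified case by case, but only at the level of telling which of $R_N^{h,+}$ and $R_N^{h,-}$ survive for a given $N$, not in the derivation itself.

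Once the eigenvalues are identified, I would recover the kernel vector from the second row of \eqref{kernel-system-a}, which says
\begin{equation*}
\left(2\tfrac{1\mp R^2}{1\pm R^2}-a\right)(a_n+a_{-n})+n(a_n-a_{-n})=0,
\end{equation*}
and solving for the ratio yields directly
\begin{equation*}
\frac{a_n}{a_{-n}}=\frac{n-2\tfrac{1\mp R^2}{1\pm R^2}+a}{n+2\tfrac{1\mp R^2}{1\pm R^2}-a}=\beta(n,R).
\end{equation*}
The first row is then automatic since the determinant is zero. For $n\neq N$ the matrix is invertible, forcing $a_n=a_{-n}=0$; this gives one-dimensionality of the kernel at each distinguished $R$.

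The routine part is the algebraic manipulation to match $R^2=(1-u)/(1+u)$ with the square-root formulas in the statement. The main obstacle is the sign bookkeeping that isolates the special case $R_1^{e,+}$: for general $N\geq 2$ in the Euclidean setting, the "$+$" branch fails to give a positive $R^2$, while for $N=1$ the numerator reduces to $(|a|-1)(|a|+3)$ and the denominator to $-(a+1)(a+3)$, both of which are negative exactly on $-1<a<1$; this is where the extra restriction $a<1$ comes from. An analogous but less delicate check, exploiting that $3n^2-3+a^2\geq 0$ for $n\geq 1$, classifies which $N$ give $R_N^{h,\pm}\in(0,1)$ and completes the hyperbolic case.
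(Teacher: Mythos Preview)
Your proposal is correct and follows essentially the same approach as the paper: compute the $2\times2$ determinant, solve the resulting quadratic, and read off the kernel vector from the second row. The only difference is cosmetic: you introduce the substitution $u=\frac{1\mp R^2}{1\pm R^2}$, which collapses both the Euclidean and hyperbolic determinant conditions into the single quadratic $3u^2-4au+(1+a^2-n^2)=0$ before inverting back to $R^2$, whereas the paper solves directly for $R^2$ and treats the two geometries in parallel. Your route is slightly cleaner (it makes transparent why the same discriminant $3n^2-3+a^2$ appears in all four formulas), and it also gives for free the uniqueness of $n$ for a fixed $R$ (since $n^2$ is determined by $u$), which the paper obtains instead by invoking monotonicity of $R_n^\star$ in $n$ just after the lemma. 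Both the analysis isolating $R_1^{e,+}$ for $a<1$ and the recovery of $a_N=\beta(N,R^\star)a_{-N}$ match the paper's argument.
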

\begin{proof}
Denote now $b_n=a_n+a_{-n}$ and $c_n=a_n-a_{-n}$. Trivially, we have that 
$$a_n=\frac{b_n+c_n}{2},\quad a_{-n}=\frac{b_n-c_n}{2}.$$
In this way, \eqref{kernel-system-1-a} reads as
system for each mode $n\geq 1$:
\begin{equation}\label{kernel-system-2-a}
\left(
\begin{array}{cc}
\mp 4\frac{ R^2}{(1\pm R^2)^2}+n^2 & \left(2\frac{1\mp R^2}{1\pm R^2}-a\right)n\\
\left(2\frac{1\mp R^2}{1\pm R^2}-a\right)n & n^2
\end{array}
\right)
\left(
\begin{array}{l}
b_n\\ c_n
\end{array}
\right)
=
\left(
\begin{array}{l}
0\\ 0
\end{array}
\right).
\end{equation}
Hence, if $b_n=c_n=0$, then $a_n=a_{-n}$ and the kernel is trivial. In order to find nontrivial solutions (note that we need that the dimension is one in order to implement the Crandall-Rabinowitz theorem) we need to solve the equation
\begin{equation*}
\left|
\begin{array}{cc}
\mp 4\frac{ R^2}{(1\pm R^2)^2}+n^2 & \left(2\frac{1\mp R^2}{1\pm R^2}-a\right)n\\
\left(2\frac{1\mp R^2}{1\pm R^2}-a\right)n & n^2
\end{array}
\right|=0,
\end{equation*}
and that is
\begin{equation}\label{kernel-det-a}
n^2=\left(2\frac{1\mp R^2}{1\pm R^2}-a\right)^2\pm \frac{4 R^2}{(1\pm R^2)^2}.
\end{equation}
We can also solve \eqref{kernel-det-a} in terms of $n$. For the Euclidean case, we find two solutions given by
\begin{equation}\label{kernel-R-n-euclidean-a}
R^2=\frac{- \left(n^2+2-a^2\right)\pm 2\sqrt{3n^2-3+a^2}}{n^2-4-4a-a^2}.
\end{equation}
whereas for the Hyperbolic case we achieve
\begin{equation}\label{kernel-R-n-hyp-a}
R^2=\frac{ \left(n^2+2-a^2\right)\pm 2\sqrt{3n^2-3+a^2}}{n^2-4-4a-a^2}.
\end{equation}

Note that here we have the extra constrain $R^2>0$, which will give us a valid position only for some values of $n$, where $a$ is fixed.

\medskip\noindent $\bullet$ Euclidean case.

Recall that $a>0$ is fixed. Note that since $R^2>0$ we could have either that there is no solution for some $n$, we have only one solution or we could have two solutions.

Having $a>0$ fixed, there exists $R>0$ satisfying \eqref{kernel-R-n-euclidean-a} for $n$ satisfying the following.
\begin{itemize}
\item If $n<2-a$ (we could only have $n=1$ for small $a$ such that $a<1$), then we have the solution
$$
R_n^+=\sqrt{\frac{- \left(n^2+2-a^2\right)+ 2\sqrt{3n^2-3+a^2}}{n^2-4-4a-a^2}}.
$$
In this case $R_1^+$ is only possible:
$$
R_1^{e,+}=\sqrt{\frac{-3+a^2+ 2a}{-3-4a-a^2}}.
$$

\item We have the positive solution 
$$
R_n^{e,-}=\sqrt{\frac{- \left(n^2+2-a^2\right)- 2\sqrt{3n^2-3+a^2}}{n^2-4-4a-a^2}},
$$
in the cases:
\begin{itemize}
\item[i)] If $a^2-2<n^2$, then $n^2<(a+2)^2$
\item[ii)] If $a^2-2>n^2$, then $(a-2)^2<n^2<(a+2)^2$
\end{itemize}
\end{itemize}
Note that if $a<1$, then we have one solution given by $R_1^{e,+}$, and another one given by $R_n^{e,-}$ with $n<a+2$. 

In the case $a>1$, then there is one solution given by $R_n^{e,-}$ where $n$ has to satisfies $i)$ or $ii)$ above.

Hence, for fixed $a$, there exists a finite number of $n$ such that $R_n^{e,-}$ is a positive solution of \eqref{kernel-R-n-euclidean-a}. We can observe this fact by doing the limit as $n\rightarrow +\infty$ in \eqref{kernel-R-n-euclidean-a}, which will arrive to a negative $R$. Hence, here we say that the number of eigenvalues is finite.

Then, fixing $N$ such that $R_N^{e,-}>0$ and fixing $R=R_N^{e,-}$, we have that the system \eqref{kernel-system-2-a} has a non trivial solution. That is given by
$$
c_N=-b_N \frac{\left(2\frac{1-(R_N^{e,-})^2}{1+(R_N^{e,-})^2}-a\right)}{N},
$$
whereas $c_n=b_n=0$ for $n\neq N$. We can come back to the original variables $a_N$ and $a_{-N}$ by the following
\begin{align*}
2a_N=&\frac{b_N}{N}\left(N-2\frac{1-(R_N^{e,-})^2}{1+(R_N^{e,-})^2}+a\right),\\
2a_{-N}=&\frac{b_N}{N}\left(N+2\frac{1-(R_N^{e,-})^2}{1+(R_N^{e,-})^2}-a\right),
\end{align*}
and then
$$
a_N=\frac{N-2\frac{1-(R_N^{e,-})^2}{1+(R_N^{e,-})^2}+a}{N+2\frac{1-(R_N^{e,-})^2}{1+(R_N^{e,-})^2}-a}a_{-N}.
$$

Note that in a similar way, we get a nontrivial solution with $R=R_1^{e,+}$ and $a<1$.

\medskip\noindent $\bullet$ Hyperbolic case.

Here, doing the limit as $n\rightarrow +\infty$ arrives to some $R_{\infty}$ positive. Then, the intuition is that there is an infinite number of $n$ such that \eqref{kernel-R-n-hyp-a} is positive. Let us analyze the solutions of $R_n$.

\begin{itemize}
\item We have the solution
$$
R_n^{h,-}=\sqrt{\frac{ \left(n^2+2-a^2\right)- 2\sqrt{3n^2-3+a^2}}{n^2-4-4a-a^2}},
$$
for $n^2>(a-2)^2$.
\item We have the solution 
$$
R_n^{h,+}=\sqrt{\frac{ \left(n^2+2-a^2\right)+ 2\sqrt{3n^2-3+a^2}}{n^2-4-4a-a^2}},
$$
for $n<a-2$ or $n>a+2$.
\end{itemize}

Hence, for $R=R^{h,-}_N$, with $N^2>(a-2)^2$ we get non trivial solutions of the system. Same happen for $R=R^{h,+}_N$ with $N<a-2$ or $N>a+2$.

\end{proof}

In order to use the previous lemma, we need to know the values of $n$ such that $R_n^{e,-}$, $R_n^{h,-}$ and $R_n^{h,+}$ are real numbers and positive. This was studied in the previous proof and we write it in the following lemma for the sake of clarity.


Define the admissible set for $n$:
$$
\mathscr{A}(R^\star_n)=\left\{n\geq 1, \quad  R^\star_n>0\right\}\cap\N,
$$
which will be studied in the following lemma.

\begin{lem}\label{lem-m}
The following assertions hold true.
\begin{itemize} 
\item If $a>\sqrt{2}$ then $R_n^{e,-}>0$ for $\sqrt{a^2-2}<n<a+2$. If $a\leq\sqrt{2}$, then $R_n^{e,-}>0$ for $n<a+2$.
\item If $a\in(0,1)$ then $R_n^{h,-}>0$ for $n>2-a$. If $a\geq 1$ then $R_n^{h,-}>0$ for $n\geq 1$. 
\item $R_n^{h,+}>0$ for $n<a-2$ or $n>a+2$.
\end{itemize}
\end{lem}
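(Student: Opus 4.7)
The plan is to determine, for each of $R_n^{e,-}$, $R_n^{h,-}$, $R_n^{h,+}$, when the argument of the outer square root is strictly positive; the inner square root $\sqrt{3n^2-3+a^2}$ is automatically well-defined for $n\geq 1$. Write $P:=n^2+2-a^2$, $Q:=\sqrt{3n^2-3+a^2}\geq 0$, and $E:=n^2-4-4a-a^2=(n-a-2)(n+a+2)$, so that (for $a>0$) the sign of $E$ is the sign of $n-a-2$, i.e.\ $E<0\Leftrightarrow n<a+2$. The three radicands are $(-P-2Q)/E$, $(P-2Q)/E$, and $(P+2Q)/E$. The algebraic identity driving the entire case analysis is
\[
(P-2Q)(P+2Q)=P^2-4Q^2=(n^2-(a+2)^2)(n^2-(a-2)^2)=E\cdot(n^2-(a-2)^2),
\]
obtained by viewing $P^2-4Q^2$ as a quadratic in $n^2$ with discriminant $64a^2$. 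This controls the signs of $P\pm 2Q$ once the location of $n$ relative to $|a-2|$, $\sqrt{a^2-2}$ and $a+2$ is known.

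Each radicand is then analysed by splitting on the sign of $P$. For $R_n^{e,-}$: if $P\geq 0$ then the numerator $-P-2Q$ is strictly negative (since $Q>0$ for $n\geq 1$), so positivity of the quotient forces $E<0$, i.e.\ $n<a+2$; together with the condition $P\geq 0$, this is vacuous for $a\leq\sqrt 2$ and becomes $n>\sqrt{a^2-2}$ for $a>\sqrt 2$. The alternative $P<0$ would demand $E>0$ but also $n^2<a^2-2<(a+2)^2$, a contradiction, so no extra solutions arise. For $R_n^{h,+}$: the numerator $P+2Q\geq 0$, so if $P\geq 0$ one needs $E>0$, giving $n>a+2$; otherwise $P<0$ (which requires $a>\sqrt 2$) and one needs both $P+2Q<0$ and $E<0$, whence the identity forces $n^2<(a-2)^2$, and for integer $n\geq 1$ this can only hold when $a>2$ and $n<a-2$. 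For $R_n^{h,-}$ the analysis is symmetric, tracking $P-2Q$ instead: the sign toggle occurs when $n$ crosses $|a-2|=2-a$ (for $a<2$), which in the regime $a\in(0,1)$ lies in $(1,2)$ and yields $n>2-a$, while for $a\geq 1$ the threshold is $\leq 1$ and all $n\geq 1$ (modulo the trivial boundary case) are admissible.

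The main obstacle is bookkeeping: the thresholds $\sqrt{a^2-2}$, $|a-2|$, $a+2$ interlace in different orders depending on $a$ (with critical values $a=1$, $a=\sqrt{2}$, $a=3/2$, $a=2$), so one must tabulate the sign configurations of $(P,\,P^2-4Q^2,\,E)$ case by case and discard the inconsistent combinations. The algebra itself is routine once the factorisation of $P^2-4Q^2$ is in place.
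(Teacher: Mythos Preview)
Your overall strategy---reducing everything to the signs of $P=n^2+2-a^2$, $E=n^2-(a+2)^2$, and the factorisation $P^2-4Q^2=E\,(n^2-(a-2)^2)$---is more systematic than what the paper does (the paper gives no separate proof of this lemma and simply refers back to the sign discussion embedded in the proof of the preceding lemma). The identity is correct and does organise the case analysis cleanly.

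There is, however, a genuine gap in your treatment of $R_n^{e,-}$ in the sub-case $P<0$. You write that ``the alternative $P<0$ would demand $E>0$'', but this is not justified: when $P<0$ the numerator $-P-2Q$ is a difference of two positive quantities and its sign is \emph{not} forced. In fact $P<0$ means $n^2<a^2-2<(a+2)^2$, so $E<0$ automatically; the radicand $(-P-2Q)/E$ is then positive precisely when $-P-2Q<0$, i.e.\ (using your own factorisation, since $-P+2Q>0$) when $P^2-4Q^2<0$, which with $E<0$ amounts to $n^2>(a-2)^2$. Thus for $a>3/2$ the window $|a-2|<n<\sqrt{a^2-2}$ also gives $R_n^{e,-}>0$; for instance $a=3$, $n=2$ yields
\[
(R_2^{e,-})^2=\frac{-(4+2-9)-2\sqrt{12-3+9}}{4-4-12-9}=\frac{3-6\sqrt{2}}{-21}>0,
\]
so your assertion that ``no extra solutions arise'' is false. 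This is exactly the case labelled ii) in the paper's discussion inside the proof of the preceding lemma. If you only intend to establish the sufficient condition stated in the lemma, the $P<0$ case can simply be dropped; but as written, your argument claims a full characterisation and that claim does not hold.
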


Moreover, due to the expression of $R_n^{e,-}$, $R_n^{h,+}$ and $R_n^{h,-}$ we have that all of them are monotone with respect to $n$, which is needed in order to have a one-dimensional kernel for the linearized operator.

In the following proposition, we find the eigenvalues associated to our problem. We observe different situations depending on the euclidean or hiperbolic case.

\begin{pro}\label{prop-kernel-a}
The following assertions hold true.
\begin{itemize}
\item (Euclidean case) If $R=R_1^{e,+}$, then 
\begin{align*}
\textnormal{Ker}\left[\partial_{(\lambda,f)} G_a(R_1^{e,+},0,0)\right]=\left\{\kappa(\beta(1,R_1^{e,+})w+\overline{w}),\quad \kappa\in\R\right\}.
\end{align*}
If $R=R_N^{e,-}$, with $N\in\mathscr{A}(R_N^{e,-})$, then 
\begin{align*}
\textnormal{Ker}\left[\partial_{(\lambda,f)} G_a(R_N^{e,-},0,0)\right]=\left\{\kappa(\beta(N,R_N^{e,-})w^N+\overline{w}^N),\quad \kappa\in\R\right\}.
\end{align*}
Otherwise, the kernel is trivial.
\item (Hyperbolic case) If $R=R_N^{h,-}$, with $N\in\mathscr{A}(R_N^{h,-})$, then
\begin{align*}
\textnormal{Ker}\left[\partial_{(\lambda,f)} G_a(R_N^{h,-},0,0)\right]=\left\{\kappa(\beta(N,R_N^{h,-})w^N+\overline{w}^N),\quad \kappa\in\R\right\}.
\end{align*}
If $R=R_N^{h,+}$, with $N\in\mathscr{A}(R_N^{h,+})$, then
\begin{align*}
\textnormal{Ker}\left[\partial_{(\lambda,f)} G_a(R_N^{h,+},0,0)\right]=\left\{\kappa(\beta(N,R_N^{h,+})w^N+\overline{w}^N),\quad \kappa\in\R\right\}.
\end{align*}
Otherwise, the kernel is trivial.
\end{itemize}
\end{pro}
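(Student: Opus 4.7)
The plan is to mirror the proof of Proposition \ref{prop-kernel} from the $a=0$ section, exploiting all the spectral work already carried out in Lemma \ref{lem-kernel-a} and the monotonicity remarks following Lemma \ref{lem-m}. Start by considering the kernel equation
$$
\partial_\lambda G_a(R,0,0)\mu+\partial_f G_a(R,0,0)h=0.
$$
Since $\partial_\lambda G_a(R,0,0)=R$ (differentiating \eqref{G-eq} with respect to $\Omega$ evaluated at the trivial branch) contributes only to the zero Fourier mode and $\partial_f G_a(R,0,0)h$ has no zero Fourier mode by Proposition \ref{prop-linop-2-a}, projecting onto the mode $n=0$ yields $\mu=0$ immediately.

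Next, project the remaining identity $\partial_f G_a(R,0,0)h=0$ onto each Fourier pair $\{w^n,\overline w^n\}$ for $n\geq 1$. By Proposition \ref{prop-linop-2-a}, this is exactly equivalent to requiring, for every $n\geq 1$, that the pair $(a_n+a_{-n},\,a_n-a_{-n})$ belong to the kernel of the $2\times 2$ matrix appearing in \eqref{kernel-system-a}. Lemma \ref{lem-kernel-a} classifies precisely when this matrix is singular: in the Euclidean case the singular values of $R$ for a given mode $n$ are $R_n^{e,\pm}$ (subject to positivity), and in the Hyperbolic case they are $R_n^{h,\pm}$ (subject to positivity).

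The core point is then the following. Fix one of the four possible eigenvalues $R^\star\in\{R_N^{e,+},R_N^{e,-},R_N^{h,+},R_N^{h,-}\}$ with $N$ in the admissible set $\mathscr{A}(R^\star)$. By the strict monotonicity of $n\mapsto R_n^{e,-}$, $n\mapsto R_n^{h,+}$ and $n\mapsto R_n^{h,-}$ with respect to $n$ (noted after Lemma \ref{lem-m}), the determinant of the $2\times 2$ matrix in \eqref{kernel-system-a} vanishes for $R=R^\star$ at the single index $n=N$ and is nonzero for all other $n\geq 1$. Consequently $a_n=a_{-n}=0$ for every $n\neq N$, while at $n=N$ the system is rank one and Lemma \ref{lem-kernel-a} gives the relation $a_N=\beta(N,R^\star)a_{-N}$ with $\beta$ defined in \eqref{beta}. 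Setting $a_{-N}=\kappa\in\R$ exhibits a one-dimensional kernel spanned by $\beta(N,R^\star)w^N+\overline w^N$, as claimed. For the Euclidean case $N=1$, $R^\star=R_1^{e,+}$ one argues identically, noting that Lemma \ref{lem-kernel-a} ensures no other mode is in resonance at this value of $R$ (by the same monotonicity, no other $R_n^{e,-}$ can coincide with $R_1^{e,+}$ in the regime $a<1$).

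The only step that needs care is the last one: confirming that the chosen eigenvalue $R^\star$ is \emph{simple}, i.e.\ that $R^\star\neq R_n^{\star'}$ for any other pair $(n,\star')$ for which the corresponding branch is defined. This follows from combining the admissibility ranges given in Lemma \ref{lem-m} with the monotonicity of each branch in $n$ and the fact that the two Euclidean branches $R_n^{e,\pm}$ (resp.\ the two Hyperbolic branches $R_n^{h,\pm}$) are separated by the sign of the $\pm$ term $2\sqrt{3n^2-3+a^2}$. With this observation, the one-dimensionality of the kernel in each case is established, and the proposition follows directly. The characterization of the trivial kernel outside the listed eigenvalues is automatic: away from those values, the $2\times 2$ matrix of \eqref{kernel-system-a} is nonsingular for every $n\geq 1$, forcing $a_n=a_{-n}=0$ for all $n$, hence $h\equiv 0$.
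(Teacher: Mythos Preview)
Your proof is correct and follows essentially the same approach as the paper: reduce to $\mu=0$, use Proposition \ref{prop-linop-2-a} to rewrite the kernel equation as the family of $2\times 2$ systems \eqref{kernel-system-1-a}, invoke Lemma \ref{lem-kernel-a} to identify the singular values of $R$, and use the monotonicity of the branches in $n$ to conclude one-dimensionality. You supply a bit more detail than the paper (explaining why $\mu=0$ and why the eigenvalue is simple), but the route is the same.
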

\begin{proof}
In order to check the kernel of $\textnormal{Ker}\left[\partial_{(\lambda,f)} G_a(R,0,0)\right]$, we should study the following equation 
$$
\partial_\lambda G_a(R,0,0)\mu+\partial_f G_a(R,0,0)h=0.
$$
We trivially find that $\mu=0$. Then, by virtue of Proposition \ref{prop-linop-2-a}, it amounts to study the following system for each mode $n\geq 1$:
\begin{equation}\label{kernel-system-1-a}
\left(
\begin{array}{cc}
\mp 4\frac{ R^2}{(1\pm R^2)^2}+n^2 & \left(2\frac{1\mp R^2}{1\pm R^2}-a\right)n\\
\left(2\frac{1\mp R^2}{1\pm R^2}-a\right)n & n^2
\end{array}
\right)
\left(
\begin{array}{l}
a_n+a_{-n}\\ a_n-a_{-n}
\end{array}
\right)
=
\left(
\begin{array}{l}
0\\ 0
\end{array}
\right).
\end{equation}
Note that such system was studied in Lemma \ref{lem-kernel-a}. With the assertions of Lemma \ref{lem-kernel-a} together with the monotonicity of the eigenvalues with respect to $n$ we conclude the proof.
\end{proof}

Next, we shall characterize the range of the linearized operator.

\begin{pro}\label{prop-range-a}
For $N$ and $R^\star$ such that \eqref{kernel-det-a} is satisfied, we get
$$
\textnormal{Range}\left[\partial_{\lambda,f}G_a(R^\star,0,0)\right]=\left\{f\in Y, \quad 
f_N=f_{-N}\frac{N+2\frac{1\mp (R^\star)^2}{1\pm (R^\star)^2}-a}{N-2\frac{1\mp (R^\star)^2}{1\pm (R^\star)^2}+a}
\right\}.
$$
Otherwise, the range is $Y$.
\end{pro}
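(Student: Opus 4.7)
The plan is to mirror the argument of Proposition \ref{prop-range} for the more general operator $\partial_{(\lambda,f)} G_a(R^\star,0,0)$ including the slipping motion. Since this operator is Fredholm of index zero (Proposition \ref{prop-fredholm-a}) and its kernel is one-dimensional (Proposition \ref{prop-kernel-a}), its range is closed in $Y$ and has codimension one. It therefore suffices to exhibit a single nontrivial linear constraint on $Y$ that is necessary for solvability: that constraint will define a closed codimension-one subspace, and equality of codimensions together with containment forces it to coincide with the range.

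Given $d \in Y$ with Fourier expansion $d(w) = \sum_{n \in \Z} d_n w^n$, I would solve $\partial_\lambda G_a(R^\star,0,0)\mu + \partial_f G_a(R^\star,0,0)h = d$ mode by mode. Matching the constant mode yields $R^\star \mu = d_0$, determining $\mu$. For each $n \geq 1$, Proposition \ref{prop-linop-2-a} shows that matching the $\cos(n\theta)$ and $i\sin(n\theta)$ coefficients reduces to exactly the $2 \times 2$ linear system analysed in Lemma \ref{lem-kernel-a}, now with right-hand side $(d_n + d_{-n},\, d_n - d_{-n})^T$. For $n \neq N$, Lemma \ref{lem-kernel-a} guarantees the determinant is nonzero, so $(a_n, a_{-n})$ is uniquely recovered; the resulting $h$ lies in $X$ by the convolution/Parseval argument used in Proposition \ref{prop-fredholm}, since the diagonal entries of the inverse matrices decay like $1/n^2$ as $n\to\infty$.

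The critical mode is $n = N$ at $R = R^\star$, where the determinant vanishes. Setting $\alpha := 2\frac{1\mp (R^\star)^2}{1\pm (R^\star)^2} - a$, the eigenvalue identity \eqref{kernel-det-a} gives $\mp 4(R^\star)^2/(1\pm (R^\star)^2)^2 + N^2 = \alpha^2$, so the $n = N$ matrix equals the rank-one symmetric matrix $vv^T$ with $v = (\alpha, N)^T$. Its column space is $\mathrm{span}\{v\}$, so solvability forces $(d_N + d_{-N},\, d_N - d_{-N})^T$ to be proportional to $v$, i.e., $N(d_N + d_{-N}) = \alpha(d_N - d_{-N})$, which rearranges to $d_N = \frac{N+\alpha}{N-\alpha}\,d_{-N}$. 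Combined with the codimension argument from the first paragraph, this is precisely the claimed characterisation. The main obstacle will be carefully tracking the hyperbolic/Euclidean cases through the $\pm$/$\mp$ bookkeeping without sign errors, and verifying the regularity step $h \in X$ uniformly across modes, particularly in the hyperbolic case where infinitely many admissible eigenvalues $R_N^{h,\pm}$ accumulate.
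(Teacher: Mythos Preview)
Your approach is essentially identical to the paper's: both invoke the Fredholm index-zero property together with the one-dimensional kernel to reduce to finding a single linear constraint, then solve mode by mode using the $2\times 2$ system from Proposition~\ref{prop-linop-2-a}, noting that only the critical mode $n=N$ is obstructed. Your explicit identification of the rank-one structure $M=vv^T$ with $v=(\alpha,N)^T$ is a slightly cleaner way of reading off the solvability condition than the paper's appeal to the kernel direction, but the substance is the same.
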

\begin{proof}
Since the linear operator is Fredholm of zero index by Proposition \ref{prop-fredholm-a} we know that the codimension of the range is $1$. Moreover,  in order to study the range of $\partial_{(\lambda,f)} G_a(R,0,0)$, we should study the following equation 
$$
\partial_\lambda G_a(R,0,0)\mu+\partial_f G_a(R,0,0)h=d,
$$
for $d\in Y$. We can write $d$ in Fourier modes as
$$
d(w)=\sum_{n\in\Z}d_n w^n.
$$
Then, we trivially find that $\mu=d_0$. By virtue of Proposition \ref{prop-linop-2-a}, we have to study the following system
\begin{equation}\label{range-system-1-a}
\left(
\begin{array}{cc}
\mp 4\frac{ R^2}{(1\pm R^2)^2}+n^2 & \left(2\frac{1\mp R^2}{1\pm R^2}-a\right)n\\
\left(2\frac{1\mp R^2}{1\pm R^2}-a\right)n & n^2
\end{array}
\right)
\left(
\begin{array}{l}
a_n+a_{-n}\\ a_n-a_{-n}
\end{array}
\right)
=
\left(
\begin{array}{l}
d_n+d_{-n}\\ d_n-d_{-n}
\end{array}
\right).
\end{equation}
Note that such matrix has been studied in Lemma \ref{lem-kernel-a}.

Denote $b_n=a_n+a_{-n}$, $c_n=a_n-a_{-n}$, $e_n=d_n+d_{-n}$ and $f_n=d_n-d_{-n}$. Take $N$ and $R=R^\star$ such that \eqref{kernel-det-a} is satisfied. Then, the kernel equation (that is, the homogeneous system) has nontrivial solutions and are given by
$$
c_N=-\frac{b_N}{N}\left(2\frac{1\mp (R^\star)^2}{1\pm (R^\star)^2}-a\right),
$$
and $c_n=b_n=0$ for $n\neq N$. Hence, the range equation \eqref{range-system-1-a} (i.e., the complete system) has a solution if
$$
e_N=\frac{f_N}{N}\left(2\frac{1\mp (R^\star)^2}{1\pm (R^\star)^2}-a\right),
$$
and has always a solution for any $e_n$ and $f_n$, with $n\neq N$. Coming back to $d_N$ and $d_{-N}$ we find the condition
$$
d_N=d_{-N}\frac{N+2\frac{1\mp (R^\star)^2}{1\pm (R^\star)^2}-a}{N-2\frac{1\mp (R^\star)^2}{1\pm (R^\star)^2}+a}.
$$
\end{proof}

\subsection{Transversal condition}
Finally, we check the transversal condition of the Crandall-Rabinowitz theorem.
\begin{pro}If
\begin{align}\label{condition-transversal}
\left(\frac{(N+a)(1\pm (R^\star)^2)-2(1\mp (R^\star)^2)}{(N-a)(1\pm (R^\star)^2)+2(1\mp (R^\star)^2)}\right)^2
&\neq \frac{ 1\mp (R^\star)^3-2N(1\pm (R^\star)^2) }{ 1\mp (R^\star)^3+2N(1\pm (R^\star)^2)   },
\end{align}
then the transversal condition is satisfied, that is,
$$
\partial_R \partial_{(\lambda,f)} G_a(R^*,0,0)(\lambda,h)^*\notin \textnormal{Range}\left[\partial_{(\lambda,f)}G_a(R^*,0,0)\right],
$$
where $R^*$ is fixed such that \eqref{kernel-det-a} is satisfied for $N$ fixed. Here, $h^*$ is an element of $\textnormal{Ker}\left[\partial_{(\lambda,f)}G_a(R^*,0,0)\right]$.
\end{pro}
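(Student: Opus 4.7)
The plan is to imitate the argument of Proposition \ref{prop-transversal} (the $a=0$ case), now keeping $a$ arbitrary. Since the kernel element $(\mu^\star, h^\star)$ has $\mu^\star = 0$ and since $\partial_R\partial_\lambda G_a(R,0,0) = 1$ contributes only to the zero Fourier mode (which lies in the range automatically by Proposition \ref{prop-range-a}), I may focus entirely on the operator $\partial_R\partial_f G_a(R^\star,0,0)h^\star$ and check whether it falls in the $N$-th mode obstruction of the range.

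First, differentiate the expressions in Proposition \ref{prop-linop-2-a} with respect to $R$. Using $\partial_R\!\left(\mp\tfrac{4R^2}{(1\pm R^2)^2}\right) = \mp\tfrac{8R(1\mp R^2)}{(1\pm R^2)^3}$ and $\partial_R\!\left(2\tfrac{1\mp R^2}{1\pm R^2}\right) = \mp\tfrac{8R}{(1\pm R^2)^2}$, one obtains, for a general $h$ with Fourier coefficients $a_n$,
\begin{align*}
\partial_R\partial_f G_a(R,0,0)h(w) = \sum_{n\geq 1}&\left[\mp\tfrac{8R(1\mp R^2)}{(1\pm R^2)^3}(a_n+a_{-n}) \mp \tfrac{8Rn}{(1\pm R^2)^2}(a_n-a_{-n})\right]\cos(n\theta)\\
&+ i\left[\mp\tfrac{8Rn}{(1\pm R^2)^2}(a_n+a_{-n})\right]\sin(n\theta).
\end{align*}

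Next, specialize to $R=R^\star$ and $h=h^\star = \beta\, w^N + \overline w^N$ with $\beta = \beta(N,R^\star)$ from Proposition \ref{prop-kernel-a}. The substitution $a_N=\beta$, $a_{-N}=1$ and $a_n=0$ otherwise produces a pure $N$-th harmonic $C\cos(N\theta) + iD\sin(N\theta)$ with the common prefactor $\mp 8R^\star/(1\pm(R^\star)^2)^2$ factored out. Converting to the $\{w^N,\overline w^N\}$ basis via $f_N = (C+D)/2$ and $f_{-N} = (C-D)/2$, Proposition \ref{prop-range-a} shows that $\partial_R\partial_{(\lambda,f)}G_a(R^\star,0,0)(\lambda,h)^\star$ lies in the range precisely when $\beta\, f_N = f_{-N}$, equivalently $C(\beta-1) + D(\beta+1) = 0$. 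A short computation reduces this to the single scalar relation
\[
\tfrac{1\mp(R^\star)^2}{1\pm(R^\star)^2}(\beta^2 - 1) + 2N(\beta^2 + 1) = 0,
\]
which, after using the explicit form $\beta = \frac{(N+a)(1\pm(R^\star)^2)-2(1\mp(R^\star)^2)}{(N-a)(1\pm(R^\star)^2)+2(1\mp(R^\star)^2)}$ of Lemma \ref{lem-kernel-a} and the eigenvalue relation \eqref{kernel-det-a} satisfied by $R^\star$, is equivalent to the equality case of \eqref{condition-transversal}. Hence negating it gives exactly the transversality claim.

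The routine parts are Steps 1--3 (Fourier calculus and bookkeeping already carried out verbatim for $a=0$). The main obstacle is the final algebraic simplification matching my natural $(R^\star)^2$-form to the form stated in \eqref{condition-transversal}: one has to combine the quadratic identity for $\beta^2$ with the defining polynomial relation $N^2(1\pm(R^\star)^2)^2 = [2(1\mp(R^\star)^2)-a(1\pm(R^\star)^2)]^2 \pm 4(R^\star)^2$ in order to rewrite the obstruction in the symmetric form exhibited in the proposition. Once this identification is done, the failure of \eqref{condition-transversal} corresponds precisely to $\partial_R\partial_{(\lambda,f)}G_a(R^\star,0,0)(\lambda,h)^\star$ lying in the range, so that \eqref{condition-transversal} is indeed the transversality condition.
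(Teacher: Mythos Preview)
Your approach is essentially identical to the paper's: differentiate the Fourier representation of $\partial_f G_a(R,0,0)$ in $R$ (noting that the $a$-term $-awh'(w)$ is $R$-independent, so $\partial_R\partial_f G_a=\partial_R\partial_f G_0$), evaluate on the kernel element $h^\star=\beta\,w^N+\overline w^N$, pass to $(f_N,f_{-N})$-coordinates, and test against the range characterisation of Proposition~\ref{prop-range-a}. The paper performs exactly these steps in the same order.

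One clarification is warranted. Your derivative
\[
\partial_R\!\left(\mp\tfrac{4R^2}{(1\pm R^2)^2}\right)=\mp\tfrac{8R(1\mp R^2)}{(1\pm R^2)^3}
\]
is correct. The paper records this coefficient (both here and already in Proposition~\ref{prop-transversal}) as $\mp 8R\frac{1\mp R^3}{(1\pm R^2)^3}$, and carries the cube into the final statement \eqref{condition-transversal}; this is a typographical slip, as one checks directly by the quotient rule. Consequently the ``main obstacle'' you anticipate---using the eigenvalue relation \eqref{kernel-det-a} to massage your $(R^\star)^2$-form into the paper's $(R^\star)^3$-form---does not exist: no such identity holds and none is needed. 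Your intermediate relation
\[
\tfrac{1\mp(R^\star)^2}{1\pm(R^\star)^2}(\beta^2-1)+2N(\beta^2+1)=0
\quad\Longleftrightarrow\quad
\beta^2=\frac{(1\mp(R^\star)^2)-2N(1\pm(R^\star)^2)}{(1\mp(R^\star)^2)+2N(1\pm(R^\star)^2)}
\]
is already the correct transversality obstruction and coincides with the paper's derivation once the exponent is corrected from $3$ to $2$.
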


\begin{proof}
Here, we can use the computations done for $a=0$ because $\partial_R\partial_f G_a(R^\star,0,0)=\partial_R\partial_f G_0(R^\star,0,0)$. Then, we find:

From Proposition \ref{prop-linop-2-a} we have
\begin{align*}
\partial_R \partial_{(\lambda,f)}G_a(R^\star,0,0)h^\star(w)=&\left[\mp 8R^\star\frac{1\mp (R^\star)^3}{(1\pm (R^\star)^2)^3} (1+\beta(N, R^\star))\mp 8 \frac{R^\star}{(1\pm (R^\star)^2)^2} N(\beta(N,R^\star)-1)\right]\cos(N\theta)\\
&+i\left[\mp 8 \frac{R^\star}{(1\pm (R^\star)^2)^2}N(1+\beta(N,R^\star))\right]\sin(N\theta)\\
=: e_N\cos(N\theta)+if_N\sin(N\theta).
\end{align*}
Define $g_N$ and $g_{-N}$ as
$$
g_N+g_{-N}=e_N, \quad g_N-g_{-N}=f_N,
$$
which agrees with
$$
g_N=\frac{e_N+f_N}{2},\quad g_{-N}=\frac{e_N-f_N}{2}.
$$
Then, by Proposition \ref{prop-range-a}, we have that $c_N\cos(N\theta)+i d_N\sin(N\theta)\notin \textnormal{Range}\left[\partial_{(\lambda,f)}G_a(R^*,0,0)\right]$ if 
$$
g_N\neq g_{-N}\frac{N+2\frac{1\mp (R^\star)^2}{1\pm (R^\star)^2}-a}{N-2\frac{1\mp (R^\star)^2}{1\pm (R^\star)^2}+a}
$$
Using the definition of $g_N$ and $g_{-N}$ we have
$$
e_N+f_N\neq (e_N-f_N)\frac{N+2\frac{1\mp (R^\star)^2}{1\pm (R^\star)^2}-a}{N-2\frac{1\mp (R^\star)^2}{1\pm (R^\star)^2}+a}.
$$
Now, we have to use the expression of $c_N$, $d_N$ and $R^\star$.
\begin{align}\label{cond-trans-a}
&\left[\mp 8R^\star\frac{1\mp (R^\star)^3}{(1\pm (R^\star)^2)^3} (1+\beta(N, R^\star))\mp 16 \frac{R^\star}{(1\pm (R^\star)^2)^2} N \beta(N,R^\star)\right]\nonumber\\
&\neq \left[\mp 8R^\star\frac{1\mp (R^\star)^3}{(1\pm (R^\star)^2)^3} (1+\beta(N, R^\star))\pm 16 \frac{R^\star}{(1\pm (R^\star)^2)^2} N\right]\frac{N+2\frac{1\mp (R^\star)^2}{1\pm (R^\star)^2}-a}{N-2\frac{1\mp (R^\star)^2}{1\pm (R^\star)^2}+a},
\end{align}
where we recall
$$
\beta(N, R^\star)=\frac{N-2\frac{1\mp (R^\star)^2}{1\pm (R^\star)^2}+a}{N+2\frac{1\mp (R^\star)^2}{1\pm (R^\star)^2}-a}.
$$
Then, the condition \eqref{cond-trans-a} follows as
\begin{align*}
\beta(N, R^\star)^2
&\neq \frac{ R^\star\frac{1\mp (R^\star)^3}{(1\pm (R^\star)^2)^3} - 2 \frac{R^\star}{(1\pm (R^\star)^2)^2} N}{ R^\star\frac{1\mp (R^\star)^3}{(1\pm (R^\star)^2)^3} + 2 \frac{R^\star}{(1\pm (R^\star)^2)^2} N },
\end{align*}
agreeing with
\begin{align*}
\beta(N, R^\star)^2
&\neq \frac{ 1\mp (R^\star)^3-2N(1\pm (R^\star)^2) }{ 1\mp (R^\star)^3+2N(1\pm (R^\star)^2)   }.
\end{align*}
Using now the definition of $\beta$ we find
\begin{align*}
\left(\frac{(N+a)(1\pm (R^\star)^2)-2(1\mp (R^\star)^2)}{(N-a)(1\pm (R^\star)^2)+2(1\mp (R^\star)^2)}\right)^2
&\neq \frac{ 1\mp (R^\star)^3-2N(1\pm (R^\star)^2) }{ 1\mp (R^\star)^3+2N(1\pm (R^\star)^2)   }.
\end{align*}
\end{proof}
Let us mention that \eqref{condition-transversal} was checked for the case $a=0$ in Proposition \ref{prop-transversal}, and can be also checked for other $a\neq 0$.

\section{Main result: $m$-fold symmetry solutions}\label{sec-main-result}
This section aims to give the main results of this work. First, let us introduce the $m$-fold symmetry in the function spaces defined in \eqref{X} and \eqref{Y}. For any $m\geq 1$, define
\begin{align*}
X_m:=&\left\{f\in C^2(\T), \quad f(w)=\sum_{n\in Z, n\neq 0}f_n w^{nm}, f_n\in\R\right\},\\
Y_m:=&\left\{f\in C^0(\T), \quad f(w)=\sum_{n\in Z}f_n w^{nm}, f_{n}\in\R\right\}.
\end{align*}
Note that $f\in X_m$ verifies
$$
f(e^{i\frac{2\pi}{m}}w)=f(w),
$$
which means that $f$ is $m$-fold symmetry. If we come back to the original function $z$ in \eqref{z-equation-3} via \eqref{f} we have that $z$ verifies
$$
z(s+\frac{2\pi}{m})=e^{\frac{i2\pi}{m}}z(s),
$$
and then, the curve is invariant under rotations of angle $\frac{2\pi}{m}$. With the definition of such spaces, we have that the nonlinear operator $G_a$ is also well-defined.
\begin{pro}\label{prop-well-defined-2}
For any $m\geq 1$, the functional $G_a:\mathcal{R}\times \R\times X_m\rightarrow Y_m$ is well-defined and $C^1$.
\end{pro}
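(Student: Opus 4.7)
The plan is to mirror the strategy used for Proposition \ref{prop-well-defined-1} and then add the additional check that the $m$-fold symmetry is preserved by the nonlinear operator $G_a$. There are essentially three items to verify: (i) $G_a$ takes values in $C^0(\mathbb{T})$ and is $C^1$ in $(R,\lambda,f)$; (ii) the reality/reflection symmetry $f(\overline{w})=\overline{f(w)}$ is preserved; (iii) the rotational symmetry $f(\zeta w)=f(w)$ with $\zeta=e^{i2\pi/m}$ is preserved, and the image has no modes off the $m\mathbb{Z}$ lattice.

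For (i), the only non-polynomial piece of \eqref{G-eq} is the rational term. Its denominator equals $1\pm |g(w)|^2$ with $g(w)=w(R+f(w))$. In the Euclidean case this quantity is always $\geq 1>0$, while in the Hyperbolic case the constraints $R\in(0,1)$ and smallness of $f$ (localness of the setting) keep it uniformly positive. Since $f\in C^2(\mathbb{T})$, all of $f,\,\overline{f},\,wf',\,w^2f''$ are continuous on $\mathbb{T}$, so the quotient is $C^0$; the dependence on $(R,\lambda,f)$ is $C^1$ by the usual rules for sums, products, and quotients of $C^1$ maps in Banach spaces (the Banach space being $C^2(\mathbb{T})\hookrightarrow C^0(\mathbb{T})$). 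Item (ii) is literally the argument given in the proof of Proposition \ref{prop-well-defined-1} and does not depend on the symmetry class; it uses only that $\Omega,R\in\mathbb{R}$ and that the expression of $G_a$ is built from $f,\overline{f}$ and their derivatives in a way that commutes with complex conjugation composed with $w\mapsto\overline{w}$.

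The new content is (iii). Given $f\in X_m$, the identity $f(\zeta w)=f(w)$ differentiated once and twice in $w$ gives
\begin{equation*}
\zeta w\, f'(\zeta w)=w f'(w),\qquad (\zeta w)^2 f''(\zeta w)=w^2 f''(w),
\end{equation*}
and clearly $\overline{f(\zeta w)}=\overline{f(w)}$. Substituting $w\mapsto \zeta w$ in \eqref{G-eq}, every $f$-dependent building block ($f$, $\overline{f}$, $wf'$, $w^2f''$, and the scalar $|g|^2=R^2+Rf+R\overline{f}+f\overline{f}$) is invariant, while the constant terms $\Omega R$ and $(1-a)R$ live in the zero Fourier mode, which belongs to $Y_m$ since $0\in m\mathbb{Z}$. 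Therefore $G_a(R,\lambda,f)(\zeta w)=G_a(R,\lambda,f)(w)$, i.e., $G_a(R,\lambda,f)\in Y_m$.

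I do not expect a serious obstacle. The only point to be careful about is the denominator bound in the Hyperbolic case (to obtain $C^1$ smoothness on a neighborhood rather than at a single point), but this is handled by working on a neighborhood of the trivial solution $(R,0,0)$ with $R\in(0,1)$, since $1-R^2>0$ and small $f$ keeps $1-|g|^2$ uniformly positive. Once (i)--(iii) are in place the proposition follows immediately, and its proof is essentially a one-line extension of Proposition \ref{prop-well-defined-1}.
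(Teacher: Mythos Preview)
Your proposal is correct and is precisely the natural argument: the paper itself states Proposition~\ref{prop-well-defined-2} without proof, implicitly relying on Proposition~\ref{prop-well-defined-1} together with the evident invariance of $G_a$ under the $m$-fold rotation $w\mapsto e^{2\pi i/m}w$. Your check that $f$, $\overline{f}$, $wf'$, $w^2 f''$ and $|g|^2$ are all invariant under this substitution is exactly what is needed to land in $Y_m$, so nothing is missing.
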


Then, we are ready to announce our main result.

\begin{theo}[$a=0$]\label{theorem-main}
The following assertions hold true.
\begin{itemize}
\item (Euclidean case) For $m=1$, there exists $\delta>0$ such that for any $R\in (1-\delta, 1+\delta)$ there is nontrivial $(\Omega_1(R),f_1(R))\in\R\times X_1$, such that $G_0(R, \Omega_1(R),f_1(R))=0$, which bifurcates from $R=1$ (the circle).
\item (Hyperbolic case) For any $m\geq 3$, there exists $\delta>0$ such that for any $R\in (R_m^h-\delta, R_m^h+\delta)$ there is nontrivial $(\Omega_m(R),f_m(R))\in\R\times X_m$, such that $G_0(R, \Omega_m(R),f_m(R))=0$, which bifurcates from $R=R_m^h$ defined in \eqref{kernel-R-n-hyp-3-prop}.
\end{itemize}
\end{theo}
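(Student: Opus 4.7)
The plan is to apply the parametric Crandall--Rabinowitz theorem of Section \ref{sec-CR} to
\[
G_0 : \mathcal{R} \times \mathbb{R} \times X_m \longrightarrow Y_m,
\]
treating $R$ as the bifurcation parameter, $\lambda$ as the free parameter, and $f$ as the unknown function. The candidate bifurcation point is $R^\star = R_1^e = 1$ with $m=1$ in the Euclidean case, and $R^\star = R_m^h$ with $m \geq 3$ in the Hyperbolic case, in both situations with bifurcation value $\lambda_0 = 0$ and direction $f = 0$.

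The first step is to check the four hypotheses of the theorem. The trivial branch $G_0(R,0,0) \equiv 0$ is Proposition \ref{prop-trivial}. The $C^1$ regularity of the partial derivatives follows from the explicit form of the nonlinearity in \eqref{G-eq} together with Proposition \ref{prop-well-defined-2}. The Fredholm-of-zero-index property is Proposition \ref{prop-fredholm}, whose mode-by-mode proof adapts unchanged to the $m$-fold symmetric spaces since the auxiliary isomorphism $\mathcal{L}$ and the compact operator $\mathcal{K}$ both preserve the sublattice of Fourier modes $n \in m\mathbb{Z}$. The transversal condition is Proposition \ref{prop-transversal}.

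The point requiring genuine attention is the one-dimensionality of the kernel inside $X_m$. Proposition \ref{prop-kernel} identifies the kernel on the full space $X$ as spanned by a single Fourier mode ($n = \pm 1$ in the Euclidean case, $n = \pm m$ in the Hyperbolic case), and the spanning element indeed lies in $X_m$. To conclude that the kernel in $X_m$ is one-dimensional one must rule out contributions from other modes $n = km$ with $|k| \geq 2$. In the Euclidean case this is immediate: by \eqref{kernel-R-n-euclidean} the spectral relation \eqref{kernel-det} admits the positive solution $R = 1$ only for $n = 1$. In the Hyperbolic case it follows from the strict monotonicity of $n \mapsto R_n^h$ noted in the proof of Lemma \ref{lem-kernel}, which forces $R_{km}^h \neq R_m^h$ whenever $|k| \geq 2$.

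With the four hypotheses verified, the parametric Crandall--Rabinowitz theorem yields a continuous branch $(R(\eta), \lambda(\eta), f(\eta)) \in \mathcal{R} \times \mathbb{R} \times X_m$, $\eta \in (-a,a)$, with $(R(0), \lambda(0), f(0)) = (R^\star, 0, 0)$, bifurcating at $\eta = 0$ tangentially to the kernel element from Proposition \ref{prop-kernel}, and solving $G_0 = 0$ nontrivially for $\eta \neq 0$. Interpreting this curve as the graph of $\eta \mapsto (R(\eta), \Omega_m(R(\eta)), f_m(R(\eta)))$ gives the statement of Theorem \ref{theorem-main}. The main difficulty of the argument was really the underlying spectral analysis carried out in Section \ref{sec-spectral-0}; what remains here is essentially the bookkeeping of assembling those ingredients inside the $m$-fold symmetric framework.
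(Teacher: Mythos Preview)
Your proposal is correct and follows essentially the same route as the paper: both arguments assemble the spectral results of Section~\ref{sec-spectral-0} (trivial branch, Fredholmness, one-dimensional kernel, codimension-one range, transversality) inside the $m$-fold symmetric spaces $X_m,Y_m$ and then invoke the parametric Crandall--Rabinowitz theorem with $R$ as bifurcation parameter and $(\lambda,f)$ as unknown. You are in fact slightly more explicit than the paper in justifying why no spurious kernel elements appear at modes $km$ with $|k|\geq 2$ (the paper simply says ``change $n$ by $nm$ everywhere''), but the content is the same.
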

\begin{proof}
In the euclidean case, the proof follows from the Crandall-Rabinowitz theorem using the spaces $X_1$: all the hypothesis of the theorem were proved in Section \ref{sec-spectral-0}.

Let us give the idea of the proof for the hyperbolic case. Fix $m>3$ and $R=R_m^h$. By Proposition \ref{prop-well-defined-2} we have that $G:\mathcal{R}\times\mathbb{R}\times X_m\rightarrow Y_m$ is well-defined and $C^1$. Then, we move to prove the spectral properties of the Crandall-Rabinowitz theorem. In order to use there all the previous works, we have to slightly modify Section \ref{sec-spectral-0}. Note that there, we have taken $h$ to be written in Fourier series as \eqref{h-expression}:
\begin{equation*}
h(w)=\sum_{n\in\Z, n\neq 0}a_n w^n=\sum_{n\geq 1}\left\{a_n w^n+a_{-n}\overline{w}^n\right\}.
\end{equation*}
However, by introducing $m$ in the space, we have to take
\begin{equation*}
h(w)=\sum_{n\in\Z, n\neq 0}a_n w^{nm}=\sum_{n\geq 1}\left\{a_{n} w^{nm}+a_{-n}\overline{w}^{nm}\right\}.
\end{equation*}
Then we just have to change $n$ by $nm$ everywhere.

By Proposition \ref{prop-kernel} we can study the kernel of $\partial_{(\lambda,f)}G_0(R^h_m,0,0)$. The trivial solution for the kernel equation will come when $n=1$ and then:
\begin{align*}
\textnormal{Ker}\left[\partial_{(\lambda,f)} G_0(R_m^h,0,0)\right]=\left\{\kappa(\alpha_m w^m+\overline{w}^m),\quad \kappa\in\R\right\}.
\end{align*}
We can work in the same way with the range, and thus Proposition \ref{prop-range} implies
\begin{align*}
\textnormal{Range}\left[\partial_{(\lambda,f)} G_0(R_m^h,0,0)\right]=\left\{f\in Y_m, \quad f_{-1}=\frac{\sqrt{1+(R_m^h)^4+(R_m^h)^2}}{1-(R_m^h)^2}f_1\right\}.
\end{align*}
Then, both dimension of the kernel and the codimension of the range equals to 1, which is required to apply the Crandall-Rabinowitz theorem. Finally, Proposition \ref{prop-transversal} gives us that the transversal condition is satisfied.

Then, the Crandall-Rabinowitz theorem states that there exists a one dimensional curve, which is transversal to the trivial one (the helix), of roots of $G_0$.
\end{proof}

\begin{rem}
\begin{itemize}
\item In the hyperbolic case, we have found a nontrivial curve of solutions that bifurcates from $R=R_m^h$ given by \eqref{kernel-R-n-hyp-3-prop}
\begin{equation*}
R_m^h:=\sqrt{\frac{ \left(m^2+2\right)- 2\sqrt{3}\sqrt{m^2-1}}{m^2-4}},\quad N\geq 3.
\end{equation*}
Note that such eigenvalues increases to $1$, which is the limit point in the hyperbolic case, as $m$ increases.

\item We have obtained a nontrivial root $f$ of $G_0$, which coming back to the original variable $z$ means
$$
z_0(s)=g(e^{is})=g(w)=w(R+f(w)).
$$
\end{itemize}
\end{rem}

Finally, let us state the main theorem for all values of $a$, whose proof follows the same ideas using the spectral study done in Section \ref{sec-spectral-a}.

\begin{theo}[$a\neq 0$]\label{theorem-main-a}
Assume that \eqref{transversal} is satisfied for any case. For any $a>0$, the following assertions hold true.
\begin{itemize}
\item (Euclidean case) For $a\in(0,1)$ and $m=1$, there exists $\delta>0$ such that for any $R\in (R_1^{e,+}-\delta, R_1^{e,+}+\delta)$ there is a nontrivial $(\Omega_1(R),f_1(R))\in\R\times X_1$, such that $G_a(R, \Omega_1(R),f_1(R))=0$, which bifurcates from $R=R_1^{e,+}$.
\item (Euclidean case) For $m\geq 1$ such that $R_m^{e,-}>0$, there exists $\delta>0$ such that for any $R\in (R_m^{e,-}-\delta, R_m^{e,-}+\delta)$ there is a nontrivial $(\Omega_m(R),f_m(R))\in\R\times X_m$, such that $G_a(R, \Omega_m(R),f_m(R))=0$, which bifurcates from $R=R_m^{e,-}$.
\item (Hyperbolic case) For $m\geq 1$ such that $R_m^{h,-}>0$, there exists $\delta>0$ such that for any $R\in (R_m^{h,-}-\delta, R_m^{h,-}+\delta)$ there is a nontrivial $(\Omega_m(R),f_m(R))\in\R\times X_m$, such that $G_a(R, \Omega_m(R),f_m(R))=0$, which bifurcates from $R=R_m^{h,-}$.
\item (Hyperbolic case) For $m\geq 1$ such that $R_m^{h,+}>0$, there exists $\delta>0$ such that for any $R\in (R_m^{h,+}-\delta, R_m^{h,+}+\delta)$ there is a nontrivial $(\Omega_m(R),f_m(R))\in\R\times X_m$, such that $G_a(R, \Omega_m(R),f_m(R))=0$, which bifurcates from $R=R_m^{h,+}$.
\end{itemize}
\end{theo}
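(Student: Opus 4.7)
The strategy is to apply the Crandall--Rabinowitz theorem with parameters (Section \ref{sec-CR}) to the functional $G_a:\mathcal{R}\times\R\times X_m\to Y_m$, with the slipping parameter $a$ held fixed. By Proposition \ref{prop-well-defined-2}, this map is well-defined and $C^1$, and Proposition \ref{prop-trivial} supplies the trivial curve $G_a(R,0,0)=0$, so hypotheses (1) and (2) of the theorem are immediate. Proposition \ref{prop-fredholm-a} provides hypothesis (3) in its Fredholm-of-zero-index part.

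To check the remaining content of hypotheses (3) and (4) at each of the four candidate bifurcation points $R^\star\in\{R_1^{e,+},R_m^{e,-},R_m^{h,-},R_m^{h,+}\}$, we mimic the argument used in the proof of Theorem \ref{theorem-main}. Writing the generic element of $X_m$ as
\[
h(w)=\sum_{n\geq 1}\bigl\{a_n w^{nm}+a_{-n}\overline{w}^{nm}\bigr\},
\]
the formulas of Proposition \ref{prop-linop-2-a} apply with every occurrence of the frequency $n$ replaced by $nm$. The kernel analysis of Proposition \ref{prop-kernel-a} then shows that $(a_n+a_{-n},a_n-a_{-n})\neq(0,0)$ is possible only when $nm$ solves the determinant equation \eqref{kernel-det-a} for the given $R^\star$. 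By construction $n=1$ is a solution; that it is the only one among $n\in\N$ follows from the monotonicity in $n$ of each of the four families $R_n^{e,\pm},R_n^{h,\pm}$ noted after Lemma \ref{lem-m}. Consequently the kernel is spanned by the single function $\beta(m,R^\star)w^m+\overline{w}^m$ and is one-dimensional in $X_m$.

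The codimension-one characterization of the range on $X_m$ is then provided by Proposition \ref{prop-range-a} (again with $n$ replaced by $nm$), and the transversality assumption (4) is exactly the content of the inequality \eqref{condition-transversal}, which is part of the hypotheses of the theorem. All four conditions of Crandall--Rabinowitz are therefore satisfied, and the theorem yields, for each admissible $m$, a one-parameter family of non-trivial $m$-fold symmetric solutions to $G_a(R,\Omega_m(R),f_m(R))=0$ bifurcating from the trivial curve at $R=R^\star$, transversal to it. The main subtlety in the argument is precisely the uniqueness of the mode solving the kernel equation inside $X_m$; this is where the monotonicity of $n\mapsto R_n^{\bullet}$ is essential, since any other integer solution $n'\geq 2$ would enlarge the kernel and obstruct the direct application of Crandall--Rabinowitz. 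Once this monotonicity is in hand the remaining verifications are formally identical to those already carried out for $a=0$ in Section \ref{sec-spectral-0}, transported to the $a\neq 0$ setting via the computations of Section \ref{sec-spectral-a}.
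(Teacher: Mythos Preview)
Your proof is correct and follows precisely the approach the paper indicates: the paper states only that the proof ``follows the same ideas using the spectral study done in Section~\ref{sec-spectral-a}'', and your sketch carries out exactly this, transporting the $m$-fold argument of Theorem~\ref{theorem-main} to the $a\neq 0$ setting via Propositions~\ref{prop-well-defined-2}, \ref{prop-fredholm-a}, \ref{prop-kernel-a}, \ref{prop-range-a} and the transversality condition~\eqref{condition-transversal}. Your emphasis on the monotonicity of $n\mapsto R_n^{\bullet}$ as the mechanism guaranteeing a one-dimensional kernel matches the paper's own remark after Lemma~\ref{lem-m}.
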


\begin{rem}

\begin{itemize}
\item Note that not all symmetries are allowed here. Indeed, for each case in the previous theorem we can bifurcate for those $m$ such that $R^\star>0$ (where here $R^\star_n= R_1^{e,+}, R_m^{e,-}, R_m^{h,-}, R_,^{h,+}$). Those values of $m$ are studied in Lemma \ref{lem-m} ang belongs to the admissible set $\mathscr{A}(R^\star_n)$.

By virtue of Lemma \ref{lem-m}, we have that there exists a finite number of symmetries $m$ for the euclidean case. However, we find an infinite number of curves parametrized by $m$ in the hyperbolic case.

\item As in the previous theorem for $a=0$, we can recover the original solution $z$ in terms of $f$:
$$
z_0(s)=g(e^{is})=g(w)=w(R+f(w)).
$$
\end{itemize}

\end{rem}

\subsection{Relation with Kida's solutions}\label{sec-kida}

In \cite{Kida81, Kida82}, the author looked for steady solutions of the vortex filament equation (only in the euclidean case):
\begin{equation}\label{Kida-2}
{\bf X}(t,s)=\mathcal{R}^{\Omega t}{\bf X^0}(s-at)+V(0,0,t),
\end{equation}
for constants $a, V$ and $\Omega$ referring respectively to the slipping motion, a constant speed translation and a rotation around the vertical axis. Moreover, he uses cilindrical  coordinates and writes
$$
{\bf X^0}(s)=(r(s)e^{i\theta(s)}, z(s)).
$$
Hence, we can compute the tangent vector as
\begin{equation}\label{Kida-3}
{\bf T^0}(s)=((r'(s)+i\theta'(s) r(s))e^{i\theta(s)},z'(s)),
\end{equation}
satisfying
\begin{equation}\label{Kida-1}
{\bf T}(t,s)=\mathcal{R}^{\Omega t}{\bf T^0}(s-at).
\end{equation}
As a consequence and using the so-called Hasimoto transformation \cite{Ha} one can easily obtained travelling wave solutions for the 1d cubic non-linear Schrödinger equation \cite{Scott}.
In the works \cite{Kida81, Kida82, Jerrard-Smets}, the authors work with the equation for ${\bf X}$, while we do it with the Schr\"odinger map equation. Hence we should compare the initial tangent vector. Moreover, note that even though \eqref{Kida-1}  is satisfied also for our solutions,\eqref{Kida-2} does not hold in general in our construction. Let us therefore justify that our main result provides some different steady solutions for the Schr\"odinger map equation to the ones obtained by Kida.

In \cite{Kida81,Kida82,Jerrard-Smets}, they found steady solutions of the type \eqref{Kida-1} with initial data \eqref{Kida-3} satisfying
\begin{align}
z'(s)=&\frac{\Omega}{2}[A-r^2(s)],\label{Kida-z}\\
\theta'(s)=&\frac{1}{2}V+(a-\frac12 AV\Omega)/(\Omega r^2(s)),\label{Kida-theta}\\
(r'(s))^2=&-\frac14\frac{g(r^2)}{r^2},\label{Kida-r}
\end{align}
for some constant $A$, and $g(R)$ is a polynomial of degree three, with two positive roots and one negative one, whose expression is given in \cite{Kida81} as
\begin{equation}\label{g-poly}
g(R)=R^3+(V^2-2A)R^2+(A^2-4-2AV^2+4Va)R+(2a-AV)^2.
\end{equation}
Denote now
$$
{\bf T^0}=(T_h, T_3),
$$
where $T_h=(T_1,T_2)$, that is, the horizontal variables. Hence, using \eqref{Kida-3} together with \eqref{Kida-z}--\eqref{Kida-r} we find 
$$
r^2(s)=A-\frac{2}{\Omega}T_3,
$$
amounting to the following relation between $T_h$ and $T_3$:
\begin{equation}\label{Kida-5}
|T_h|^2=(A-\frac{2}{\Omega}T_3)\left[\frac{V}{2}+\frac{a-\frac12 AV\Omega}{\Omega(A-\frac{2}{\Omega}T_3)}\right]^2-\frac14 \frac{g(A-\frac{2}{\Omega}T_3)}{A-\frac{2}{\Omega}T_3},
\end{equation}
for any $s$.

Let us check if our solutions satisfy \eqref{Kida-5}, which is indeed related to the perturbation chosen by Kida in \cite{Kida81,Kida82}. Note that our ansatz for the initial tangent vector is different and it can be written as \eqref{T0}:
\begin{equation}\label{T0-2}
{\bf T^0}(s)=\frac{1}{1+|z_0(s)|^2}(2z_0(s),1-|z_0(s)|^2),
\end{equation}
with
$$
z_0(s)=e^{is}(R+f(e^{is})),
$$
and $f$ being the perturbation. Let us check if \eqref{Kida-5} is satisfied in \eqref{T0-2}. Denote $\gamma:=|z_0(s)|^2$ and $\beta:=A-\frac{2}{\Omega}\frac{1-\gamma}{1+\gamma}$ to simplify the notation, hence \eqref{Kida-5} (which is satisfied by the Kida's solutions) is verified for our solutions if
$$
\frac{4\gamma}{(1+\gamma)^2}=\beta\left[\frac{V}{2}+\frac{a-\frac12 AV\Omega}{\Omega}\frac{1}{\beta}\right]^2-\frac14\frac{g(\beta)}{\beta}.
$$
From the definition of $\beta$ one has
$$
\gamma=\frac{A-\frac{2}{\Omega}-\beta}{\beta-\frac{2}{\Omega}},\quad 1+\gamma=\frac{A-\frac{4}{\Omega}}{\beta-\frac{2}{\Omega}},
$$
and then the condition amounts to
\begin{equation}\label{Kida-6}
4\beta\frac{(A-\frac{2}{\Omega}-\beta)(\beta-\frac{2}{\Omega})}{A-\frac{4}{\Omega}}=\frac{V^2}{4}\beta^2+\frac{(a-\frac12 AV\Omega)^2}{\Omega^2}+\beta V\frac{a-\frac12 AV\Omega}{\Omega}-\frac14g(\beta).
\end{equation}
Now let us insert the expression of $g$ in \eqref{g-poly} in \eqref{Kida-6}, finding that \eqref{Kida-6} agrees with
\begin{align}
\nonumber&\beta^3\left\{-\frac14+\frac{4}{A-\frac{4}{\Omega}}\right\}+\beta^2\left\{\frac{A}{2}-\frac{4}{A-\frac{4}{\Omega}}\right\}\\
&+\beta\left\{V\frac{a-\frac12 AV\Omega}{\Omega}-\frac14(A^2-4-2AV^2+4Va)+\frac{8}{A-\frac{4}{\Omega}}\frac{1}{\Omega}(A-\frac{2}{\Omega})\right\}\nonumber\\
&+\frac{(a-\frac12 AV\Omega)^2}{\Omega^2}-\frac14 (2a-AV)^2=0.\label{Kida-pol}
\end{align}
Assume that the previous polynomial is not trivial, that is, at least one of the coefficients is non vanishing. In that case, $\beta=\beta(s)$ is a root of a polynomial of degree three and hence is constant. Then, using the definition of $\beta$ we achieve that $\gamma$ is also constant in $s$ and then $|z_0(s)|$ is constant in $s$. However, since $z_0$ is a perturbation of the trivial solution one gets
\begin{equation}\label{Kida-7}
|z_0(s)|=|R+f(e^{is})|,
\end{equation}
which is constant only if $f=0$, since $s\mapsto e^{is}(R+f(e^{is}))$ is a parametrization of a $2D$ curve whose modulus is constant and thus it is radial.

Notice that we have assumed before that the polynomial \eqref{Kida-pol} is nontrivial. Assuming that it is trivial, we arrive to a system of four equations with four unknowns. In order to have that the coefficient with $\beta^3$ is vanishing we get
$$
A=\frac{4(1+4\Omega)}{\Omega},
$$
and in order to have that the term without $\beta$ is vanishing one gets $\Omega=1$ implying $A=20$. Now, assuming that the coefficient with $\beta^2$ we get should have
$$
\frac{A}{2}=\frac{4}{A-\frac{4}{\Omega}},
$$
which is not compatible with $\Omega=1$ and $A=20$. Hence, we conclude that there is no choice of $(a,A,\Omega,V)$ such that the polynomial \eqref{Kida-pol} is trivial.

\end{document}